\numberwithin{equation}{subsection}
\theoremstyle{plain}
\newtheorem{theorem}[equation]{Theorem}
\newtheorem{lemma}[equation]{Lemma}
\newtheorem{proposition}[equation]{Proposition}
\theoremstyle{definition}
\newtheorem{definition}[equation]{Definition}
\newtheorem{example}[equation]{Example}
\newtheorem{remark}[equation]{Remark}
\DeclareMathOperator{\SD}{SD}
\DeclareMathOperator{\ch}{ch}
\DeclareMathOperator{\Hilb}{Hilb}
\DeclareMathOperator{\PFH}{PFH}
\DeclareMathOperator{\End}{End}
\DeclareMathOperator{\sort}{sort}
\DeclareMathOperator{\Res}{Res}
\DeclareMathOperator{\Ker}{Ker}
\DeclareMathOperator{\pExp}{Exp}
\DeclareMathOperator{\Id}{Id}
\DeclareMathOperator{\terms}{terms}
\DeclareMathOperator{\LT}{LT}
\newcommand{\BC}{\mathbb{C}}
\newcommand{\BT}{\mathbb{T}}
\newcommand{\BZ}{\mathbb{Z}}
\newcommand{\BA}{\mathbb{A}}
\newcommand{\BB}{\mathbb{B}}
\newcommand{\BQ}{\mathbb{Q}}
\newcommand{\CL}{\mathcal{L}}
\newcommand{\CN}{\mathcal{N}}
\newcommand{\sq}{\square}
\author{Erik Carlsson}
\address{Department of Mathematics, University of California\newline One Shields Avenue, Davis CA 95616}
\email{ecarlsson@math.ucdavis.edu}
\author{Eugene Gorsky}
\address{Department of Mathematics, University of California\newline One Shields Avenue, Davis CA 95616}
 \address{National Research University Higher School of Economics\newline Usacheva 6, Moscow, Russia}
 \email{egorskiy@math.ucdavis.edu}
\author{Anton Mellit}
\address{Faculty of Mathematics, University of Vienna\newline Oskar-Morgenstern-Platz 1, 1090 Wien, Austria}
\email{anton.mellit@univie.ac.at}
\title{The $\BA_{q,t}$ algebra and parabolic flag Hilbert schemes}
\begin{document}
\maketitle

\begin{abstract}
The earlier work of the first and the third named authors introduced the algebra $\BA_{q,t}$ and its polynomial representation.
In this paper we construct an action of this algebra on the equivariant K-theory of certain smooth strata in the flag Hilbert schemes of points on the plane. In this presentation, the fixed points of torus action correspond to generalized Macdonald polynomials and the 
the matrix elements of the operators have explicit combinatorial presentation. 
\end{abstract}

\section{Introduction}

In the earlier article the first and the third named authors \cite{carlsson2015proof} introduced a new and interesting algebra called the algebra $\BA_{q,t}$. It acts on the space $V=\bigoplus_{k=0}^{\infty}V_k$, where $V_k=\Lambda\otimes \BC[y_1,\ldots,y_k]$ and $\Lambda$ is the ring of symmetric functions in infinitely many variables. The algebra has generators $y_i,z_i,T_i,d_+$ and  $d_-$. On each subspace $V_k$,
$y_i$ act as multiplication operators, $T_i$ as Demazure-Lusztig operators, so together they form an affine Hecke algebra. The operators $z_i$ and $T_i$ also form an affine Hecke algebra (in particular, $z_i$ commute). Finally, the most interesting operators 
$d_+:V_{k}\to V_{k+1}$ and $d_{-}:V_{k}\to V_{k-1}$ intertwine different subspaces. 

The algebra $\BA_{q,t}$ was used in \cite{carlsson2015proof} to prove a long-standing {\em Shuffle Conjecture} in algebraic combinatorics \cite{haglund2005combinatorial}. Later, it was also used in \cite{mellit2016toric}  to prove a ``rational" version of Shuffle conjecture introduced in \cite{GN}. The latter yields a combinatorial expression for certain matrix elements of the generators $P_{m,n}$ of the elliptic Hall algebra \cite{SV} acting in its polynomial representation. In particular, the operator $P_{m,n}:V_0\to V_0$ was realized in \cite{mellit2016toric} inside the algebra $\BA_{q,t}$.

It is known from the work of Schiffmann, Vasserot \cite{SV}, Feigin, Tsymbaliuk \cite{FT} and Negu\cb{t} \cite{negut2012moduli} that the elliptic Hall algebra acts on the equivariant $K$-theory of the Hilbert schemes of points on the plane. In particular, \cite{negut2012moduli}
realized $P_{m,n}$ by an explicit geometric correspondence. This leads to a natural question: is there a geometric interpretation of the algebra $\BA_{q,t}$ and its representation $V_\bullet$? We answer this question in the present paper.

The key geometric object is the {\em parabolic flag Hilbert scheme} $\PFH_{n,n-k}$ which is defined as the moduli space of flags $\{I_{n-k}\supset \ldots \supset I_{n}\}$, where $I_s$ are ideals in $\BC[x,y]$ of codimension $s$ and $yI_{n-k}\subset I_n$.  We prove that this is in fact a smooth quasiprojective variety. The following theorem is the main result of the paper. 

\begin{theorem}
Let $U_k=\bigoplus_{n=k}^{\infty}K_{\BC^*\times \BC^*}(\PFH_{n,n-k})$ and let $U_\bullet=\bigoplus_{k=0}^{\infty}U_k$.
Then there is an action of the algebra $\BA_{q,t}$ on $U_\bullet$ and isomorphisms $U_k\simeq V_k$ for all $k$ compatible with the $\BA_{q,t}$-algebra action.
\end{theorem}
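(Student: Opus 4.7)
The plan is to construct the $\BA_{q,t}$-action on $U_\bullet$ from natural geometric operations on the parabolic flag Hilbert schemes and then identify it with the polynomial representation $V_\bullet$ via torus-equivariant localization. Since $\PFH_{n,n-k}$ is smooth (by the statement in the excerpt) with isolated $\BC^*\times\BC^*$-fixed points, $U_k$ has, after localization, a basis indexed by these fixed points. These are flags of monomial ideals $I_{n-k}\supset\cdots\supset I_n$ satisfying $yI_{n-k}\subset I_n$, which are parametrized by a Young diagram together with a choice of $k$ boxes forming a ``$y$-removable strip''. This combinatorics matches the indexing set of the distinguished basis of $V_k$ by generalized Macdonald polynomials.

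I would then realize the generators of $\BA_{q,t}$ as explicit geometric operators. The successive quotients $\CL_i=I_{n-k+i-1}/I_{n-k+i}$ are tautological line bundles on $\PFH_{n,n-k}$ (thanks to the $y$-stability condition), and I take $y_i$ to act as multiplication by $[\CL_i]$; the classes $z_i$ arise from complementary ``$x$-direction'' tautological line bundles. The Demazure--Lusztig operators $T_i$ come from standard small-step correspondences swapping adjacent pieces of the flag, equivalently from the Bernstein--Lusztig presentation of the affine Hecke algebra. The intertwining operators $d_+:U_k\to U_{k+1}$ and $d_-:U_k\to U_{k-1}$ are defined as push-pull operators along the natural forgetful maps between $\PFH_{n,n-k}$ and $\PFH_{n+1,n-k}$ (respectively $\PFH_{n,n-k+1}$), twisted by suitable line bundles so that the relations of $\BA_{q,t}$ hold.

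The key step, and the most delicate, is verifying that these operators satisfy the defining relations of $\BA_{q,t}$ and that the fixed-point identification intertwines them with the action on $V_\bullet$. For this I would compute all matrix elements in the localization basis: for a smooth variety with isolated fixed points, multiplication by a tautological line bundle and push-pull along a smooth correspondence reduce to explicit rational functions in the torus weights of the tangent spaces. I would collect these weight contributions into explicit combinatorial formulas indexed by pairs $(\lambda,S)$ where $\lambda$ is a Young diagram and $S$ the distinguished strip, and compare with the known formulas for the action of $y_i,z_i,T_i,d_\pm$ on the generalized Macdonald basis of $V_\bullet$ coming from \cite{carlsson2015proof} and \cite{mellit2016toric}. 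Since matrix elements determine the operators on the fixed-point basis, a coefficientwise match simultaneously establishes both the $\BA_{q,t}$-relations on $U_\bullet$ and the $\BA_{q,t}$-equivariant isomorphism $U_k\simeq V_k$.

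The principal obstacle I anticipate is this last matrix-element calculation for $d_+$ and $d_-$. Because they change $k$, their matrix elements involve comparing tangent weights at fixed points of different $\PFH_{n,n-k}$ linked by an inclusion of flags, and reconciling the result with the intricate combinatorial expressions that define $d_\pm$ inside $\BA_{q,t}$ requires a careful local analysis of each correspondence at each fixed point, ultimately reducing to the combinatorics of adding or removing a single box from a Young diagram together with the coloring of the distinguished strip.
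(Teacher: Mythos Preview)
Your outline captures the right shape of the argument but contains two substantive gaps that would prevent it from going through as stated.

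\textbf{First, the assignment of $y_i$ and $z_i$ is reversed.} In the paper it is $z_i$, not $y_i$, that is realized geometrically as multiplication by the tautological line bundle $\CL_i=I_{n-i}/I_{n-i+1}$. There is no natural ``complementary $x$-direction'' line bundle realizing $y_i$; indeed the $y_i$ are not diagonal in the fixed-point basis. The paper instead constructs only $T_i$, $z_i$, $d_+$, $d_-$ geometrically and checks that these satisfy the relations of an auxiliary algebra $\BB_{q,t}$. A homomorphism $\beta:\BA_{q,t}\to\BB_{q,t}$ then produces the $\BA_{q,t}$-action, with $y_i$ recovered algebraically from the commutator $\varphi=\tfrac{1}{q-1}[d_+,d_-]$ via the Iwahori--Matsumoto style presentation. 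This detour through $\BB_{q,t}$ is essential: it avoids having to realize $y_i$ or $d_+^*$ directly by correspondences.

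\textbf{Second, the isomorphism argument as you describe it is circular.} You propose to match matrix elements against ``known formulas on the generalized Macdonald basis of $V_\bullet$'', but that basis is precisely what this paper constructs; it does not exist in \cite{carlsson2015proof} or \cite{mellit2016toric} for $k>0$. The paper's route is different: since $V_\bullet\cong \BA_q\,\Id_0$ as a left $\BA_q$-module, there is a \emph{unique} $\BA_q$-equivariant map $\Phi:V_\bullet\to U_\bullet$ sending $1\mapsto H_{()}$. One then shows $\Phi$ is an isomorphism by a dimension count together with a triangularity argument: the images $\Phi(v_{\mu,a})$ of an explicit monomial basis are upper-triangular in the fixed-point basis $H_{\lambda,w}$ with respect to a Bruhat-type order on the common indexing set, with nonzero diagonal. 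The identification $\Phi_0(\tilde H_\mu)=H_\mu$ is then deduced from compatibility with the involution $\CN$ and the Pieri rule for $d_-d_+$, not from a direct basis comparison.

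In short, your localization framework for computing matrix elements of $T_i$, $z_i$, $d_\pm$ is exactly right and is what the paper does; but you need to (i) swap the roles of $y_i$ and $z_i$ and route the construction through $\BB_{q,t}$, and (ii) replace the circular basis-matching step by the cyclicity-plus-triangularity argument.
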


The construction of the action of the generators of $\BA_{q,t}$ is quite natural. The action of $z_i$ and $T_i$ follows the classical work of Lusztig on the action of affine Hecke algebras on flag varieties \cite{lusztig1985equivariant}. In particular, $z_i$ correspond to natural line bundles $\CL_i=I_{n-i-1}/I_{n-i}$ on $\PFH_{n,n-k}$. The operators $d_{\pm}$ change the length of the flag and correspond to natural projections $\PFH_{n+1,n-k}\to \PFH_{n,n-k}$ and $\PFH_{n,n-k}\to \PFH_{n,n-k+1}$. Finally, the operators $y_i$ can be obtained using the commutation relations between $d_+,d_-$ and $T_i$.

We compare this geometric construction with \cite{FT,SV,negut2012moduli}.  The key operator in \cite{FT,SV} is realized by a simple Nakajima correspondence $\Hilb^{n,n+1}$ with some power $\CL^k$ of a line bundle on it,  which naturally projects to $\Hilb^n$ and $\Hilb^{n+1}$. This yields an operator $P_{1,k}:K(\Hilb^n)\to K(\Hilb^{n+1})$. We regard $\Hilb^{n,n+1}$ as a cousin of $\PFH_{n+1,n}$, and decompose $P_{1,k}$ as a composition of three operators $P_{1,k}=d_{-}z_1^{k}d_{+}$. Here $d_{+}:U_0\to U_1$ and $d_{-}:U_1\to U_0$ correspond to the pullback and the pushforward under projections, and $z_1:U_1\to U_1$ corresponds to the line bundle $\CL$. We make a similar comparison with the construction of \cite{negut2012moduli} for more complicated operators $P_{m,n}$ in the elliptic Hall algebra.

A combinatorial consequence of this work is the construction of generalized Macdonald basis corresponding to the fixed points of the torus action in $\PFH_{n,n-k}$. For $k=0$ we recover the modified Macdonald basis corresponding to the fixed points on the Hilbert scheme of points \cite{haiman2002combinatorics}. We explicitly compute the matrix elements for all the generators of $\BA_{q,t}$ in this basis.
In fact, we prove that these new elements have a triangularity property with respect to a version of the Bruhat order for affine
permutations, generalizing the triangularity in the dominance order for usual Macdonald polynomials.

Finally, we would like to outline some future directions. First, the construction of the spaces $\PFH_{n,n-k}$ is very similar to the construction of so-called affine Laumon spaces \cite{feigin2011yangians}. Tsymbaliuk \cite{tsymbaliuk2010quantum} constructed an action of the quantum toroidal algebras $\ddot{U}(gl_k)$ on the $K$-theory of Laumon spaces. In particular, for $k=1$ this action coincides with the action of the elliptic Hall algebra (which is known to be isomorphic to $\ddot{U}(gl_1)$) on the $K$-theory of the Hilbert scheme of points. However, it appears that for $k>1$ his representation is larger than $U_{k-1}$. We plan to investigate the relations between $\BA_{q,t}$ and quantum toroidal algebras in the future.

Second, the results of \cite{GN,GNR,mellit2017homology} suggest a deep relation between Hilbert schemes and elliptic Hall algebra, and categorical link invariants such as Khovanov-Rozansky homology. In particular, a precise relation between the Khovanov-Rozansky homology of $(m,n)$ torus knots and the operators $P_{m,n}$ was proved for $m=n+1$ by Hogancamp \cite{hogancamp2017khovanov} and for general coprime $(m,n)$ by the third author in \cite{mellit2017homology}. It is expected \cite{mellit2016toric} that $\BA_{q,t}$ can be realized as the skein algebra of certain more general tangles in the thickened torus, so it would be interesting to extend the approach of \cite{GNR} to this more general framework.

\section{Acknowledgments}

The authors would like to thank Mikhail Bershtein, Andrei Negu\cb{t} and Monica Vazirani for the useful discussions. The work of E. G. was partially supported by the NSF grants DMS-1559338 and DMS-1700814, Russian Academic Excellence Project 5-100 and RSF-16-11-10160. The work of A. M. was supported by the Advanced Grant ``Arithmetic and Physics of Higgs moduli spaces'' No. 320593 of the European Research Council and by the START-Project Y963-N35 of the Austrian Science Fund.

 \section{The algebra}
 \subsection{$\BA_{q}$}\label{section BAq}
The algebras under consideration can be viewed as path algebras of quivers with vertex set $\BZ_{\geq 0}$ \footnote{A categorically inclined reader can view our algebras as categories with object set $\BZ_{\geq 0}$. Then a representation of a category is a simply a functor to the category of vector spaces.}. So we implicitly assume that all our algebras contain orthogonal idempotents $\Id_i$ ($i\in\BZ_{\geq 0}$) and when we speak of an element $R:i\to j$ for $i,j\in \BZ_{\geq 0}$ we impose relation $R = R \Id_i = \Id_j R$. When we have a representation $V$ of such an algebra we always assume that $V=\bigoplus_{i=0}^\infty V_i$ where $V_i = \Id_i V$. Then any element $R:i\to j$ as above induces a linear map $V_i\to V_j$. To stress the direct sum decomposition above we denote such a representation by $V_\bullet$.

 First we define the ``half algebra'' $\BA_q$ depending on one parameter $q\in\BQ(q)$:
\begin{definition} $\BA_q$ is the $\BQ(q)$-linear algebra generated by a collection of orthogonal idempotents labeled by $\BZ_{\geq 0}$ and elements
\[
d_+:k\to k+1,\; d_-:k\to k-1,\; T_i:k\to k \quad(1\leq i<k),\; y_i:k\to k \quad(1\leq i\leq k)
\]
subject to relations
\begin{equation}\label{eq:Trel}
(T_i-1)(T_i+q)=0, \quad T_i T_{i+1} T_i = T_{i+1} T_i T_{i+1}, \quad T_i T_j = T_j T_i \quad (|i-j|>1),
\end{equation}
\begin{equation}\label{eq:Tyrel}
T_i y_{i+1} T_i = q y_{i} \; (1\leq i\leq k-1),
\end{equation}
\begin{equation*}
y_i T_j = T_j y_i  \; (i\notin \{j, j+1\}),\;
y_i y_j = y_j y_i \; (1\leq i,j \leq k),
\end{equation*}
\begin{equation}\label{eq:Tdminusrel}
d_-^2 T_{k-1} = d_-^2,\;  d_- T_i = T_i d_-\; (1\leq i \leq k-2),\;
d_- y_i = y_i d_-\; (1\leq i\leq k-1),
\end{equation}
\begin{equation}\label{eq:Tdplusrel}
T_1 d_+^2 = d_+^2,\; d_+ T_i = T_{i+1} d_+ \; (1\leq i \leq k-1),
\end{equation}
\begin{equation*}
d_+ y_i = T_1 T_2 \cdots T_i y_i T_i^{-1} \cdots T_1^{-1} d_+,\; (1\leq i\leq k)
\end{equation*}
\begin{equation}\label{eq:commrel}
d_+ d_- - d_- d_+ = (q-1) T_1 T_2 \cdots T_{k-1} y_k.
\end{equation}
\end{definition}

\begin{remark}\label{rem:heckecat}
Note that relations \eqref{eq:Trel} define the Hecke algebra, and relations \eqref{eq:Trel} + \eqref{eq:Tyrel} define the affine Hecke algebra. 
\end{remark} 

In what follows we will need a slightly different description of the algebra $\BA_{q}$. Let the $AH_k$ be the affine Hecke algebra generated by $T_1,\ldots,T_{k-1},y_1,\ldots,y_k$ modulo relations \eqref{eq:Trel} and \eqref{eq:Tyrel}. The following lemma gives another presentation of the algebra $AH_k$ similar to the Iwahori-Matsumoto presentation of the affine Hecke algebra, although in our definition $y_i$ are not invertible. The proof is similar to \cite[Lemma 5.4]{carlsson2015proof}, but we present it here for completeness.

\begin{lemma}
\label{lem:Iwahori}
Consider the algebra $AH'_k$ generated by $T_1,\ldots,T_{k-1}$ and an element $\varphi$ modulo relations \eqref{eq:Trel} and
\begin{equation}
\label{eq:phirel}
\varphi T_i=T_{i+1}\varphi\ (i\le k-2),\ \varphi^2 T_{k-1}=T_1\varphi^2.
\end{equation}
Then the algebras $AH_k$ and $AH'_k$ are isomorphic.
\end{lemma}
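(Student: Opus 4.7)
The plan is to construct mutually inverse maps between $AH_k$ and $AH'_k$, following the classical Iwahori--Matsumoto-style passage to a ``rotation'' generator. On the one hand, define $\varphi_0 := y_1 T_1 T_2 \cdots T_{k-1} \in AH_k$ and check it satisfies the defining relations \eqref{eq:phirel} of $AH'_k$, giving a map $\psi: AH'_k \to AH_k$. On the other hand, define elements $\tilde y_i \in AH'_k$ that satisfy the affine Hecke relations, giving a map $\rho: AH_k \to AH'_k$, and then verify $\rho \circ \psi$ and $\psi \circ \rho$ fix the generators.

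For the forward direction, I would verify \eqref{eq:phirel} for $\varphi_0$ as follows. The braid identity $(T_1 T_2 \cdots T_{k-1}) T_i = T_{i+1} (T_1 T_2 \cdots T_{k-1})$ for $i \le k-2$ is an elementary consequence of the braid and commutation relations in \eqref{eq:Trel}; combined with the fact that $y_1$ commutes with $T_j$ for $j \ge 2$, this gives $\varphi_0 T_i = y_1 T_{i+1} (T_1 \cdots T_{k-1}) = T_{i+1} \varphi_0$. The second relation $\varphi_0^2 T_{k-1} = T_1 \varphi_0^2$ is the delicate one: expand $\varphi_0^2 = y_1 (T_1 \cdots T_{k-1}) y_1 (T_1 \cdots T_{k-1})$ and shuffle the middle $y_1$ to the right using the Bernstein-type identity $T_1 y_1 = y_2 T_1 + (1-q) y_1$ (derived from \eqref{eq:Tyrel} together with the Hecke quadratic in \eqref{eq:Trel}) and the commutations $y_1 T_j = T_j y_1$ for $j \ge 2$; after repeated application the desired equality emerges by grouping braid moves. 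This step is essentially a transcription of the standard affine-Hecke fact that $\pi^2 T_{k-1} \pi^{-2} = T_1$ in the extended affine Hecke algebra, adapted to the setting where $y_i$ need not be invertible.

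For the reverse direction, observe that $T_i$ is invertible in $AH'_k$ via $T_i^{-1} = q^{-1}(T_i + q - 1)$ coming from \eqref{eq:Trel}. Define $\tilde y_1 := \varphi \, T_{k-1}^{-1} T_{k-2}^{-1} \cdots T_1^{-1}$ and, inductively, $\tilde y_{i+1} := q\, T_i^{-1} \tilde y_i T_i^{-1}$ for $1 \le i \le k-1$. The relations $T_i \tilde y_{i+1} T_i = q \tilde y_i$ in \eqref{eq:Tyrel} hold by construction. The remaining checks are that $\tilde y_i$ commutes with $T_j$ for $i \notin \{j, j+1\}$, and that the $\tilde y_i$ commute pairwise. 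Both reduce, via the inductive definition, to manipulations of $\varphi$ and the $T_j$'s and are ultimately pinned down by the affine relation $\varphi^2 T_{k-1} = T_1 \varphi^2$, which is exactly what allows one to pass $\tilde y_1$ past $\tilde y_2$. This yields a well-defined algebra map $\rho: AH_k \to AH'_k$.

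Finally, to conclude, note that $\rho(\varphi_0) = \tilde y_1 T_1 \cdots T_{k-1} = \varphi \, T_{k-1}^{-1} \cdots T_1^{-1} \cdot T_1 \cdots T_{k-1} = \varphi$, and $\rho(T_i) = T_i$, so $\psi \circ \rho$ fixes a generating set of $AH_k$. Conversely $\rho \circ \psi$ sends $T_i \mapsto T_i$ and, by unwinding the definitions, $\varphi \mapsto \varphi$, hence is the identity on $AH'_k$. The main technical obstacle, as noted, is the verification of the pairwise commutativity of the $\tilde y_i$ in $AH'_k$; this is the combinatorial heart of the equivalence between the two presentations of the affine Hecke algebra, and essentially amounts to showing that the two relations \eqref{eq:phirel} are strong enough to encode the full Bernstein commutativity lattice.
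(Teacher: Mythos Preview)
Your overall strategy matches the paper's, but the specific formulas you wrote down for the rotation element and for the $\tilde y_i$ are incorrect in the paper's conventions, and the errors are not merely cosmetic: the relations you need actually fail. Take $k=2$. Your $\varphi_0 = y_1 T_1$ gives
\[
\varphi_0^2 = y_1 T_1 y_1 T_1 = y_1\big(y_2 T_1 + (1-q)y_1\big)T_1 = (1-q)\,y_1(y_1+y_2)\,T_1 + q\,y_1 y_2,
\]
using $T_1 y_1 = y_2 T_1 + (1-q)y_1$ and $T_1^2=(1-q)T_1+q$. Since $y_1(y_1+y_2)$ is not symmetric in $y_1,y_2$, this does \emph{not} commute with $T_1$, so $\varphi_0^2 T_{1}\neq T_1\varphi_0^2$ and your map $\psi$ is not well defined. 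The paper instead takes $\varphi = T_1\cdots T_{k-1}\,y_k$ (equivalently $q^{k-1} y_1 T_1^{-1}\cdots T_{k-1}^{-1}$); for $k=2$ one gets $\varphi^2 = T_1 y_2 T_1 y_2 = q\,y_1 y_2$, which visibly commutes with $T_1$. In general the paper's computation reduces both $\varphi^2 T_{k-1}$ and $T_1\varphi^2$ to $q\,(T_1\cdots T_{k-1})(T_1\cdots T_{k-2})\,y_{k-1}y_k$.

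The same sign-of-$T$ issue breaks your inverse map. With $\tilde y_1 = \varphi\,T_{k-1}^{-1}\cdots T_1^{-1}$ and $\tilde y_{i+1}=q\,T_i^{-1}\tilde y_i T_i^{-1}$, already for $k=2$ one finds (via the paper's isomorphism $\varphi\mapsto T_1 y_2$) that $\tilde y_1 = q\,y_1 T_1^{-2}$, $\tilde y_2 = q\,y_2 T_1^{-2}$, and a short computation gives $y_1 T_1 y_2 - y_2 T_1 y_1 = (y_1^2 - y_2^2)T_1 + (q-1)y_1(y_1+y_2)\neq 0$, so $[\tilde y_1,\tilde y_2]\neq 0$. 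The paper's correct definition is $y_i = q^{i-k}\,T_{i-1}^{-1}\cdots T_1^{-1}\,\varphi\,T_{k-1}\cdots T_i$ (positive $T$'s on the right of $\varphi$), and with this choice the commutativity $y_1 y_k = y_k y_1$ is a direct braid computation from $\varphi^2 T_{k-1}=T_1\varphi^2$, not a step left as a ``technical obstacle.'' If you swap your $T$'s for $T^{-1}$'s (or equivalently move $y_k$ to the right instead of $y_1$ to the left), your argument becomes the paper's.
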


\begin{proof}
Define $\varphi=T_1  \cdots T_{k-1} y_k.$
Let us prove that \eqref{eq:Tyrel} imply \eqref{eq:phirel}. For $i\le k-2$ one has:
\[
\varphi T_i=T_1 \cdots T_{k-1} y_kT_i=T_1  \cdots T_{k-1} T_iy_k=T_{i+1}T_1  \cdots T_{k-1} y_k=T_{i+1}\varphi,
\]
while 
\[
\varphi^2 T_{k-1}= T_1 \cdots T_{k-1}y_kT_1 \cdots T_{k-1}y_kT_{k-1}=q(T_1 \cdots T_{k-1})(T_1  \cdots T_{k-2})y_ky_{k-1},
\]
\[
T_1\varphi^2=T_1(T_1 \cdots T_{k-1})y_k(T_1  \cdots T_{k-1})y_k=T_1(T_1  \cdots T_{k-1})(T_1  \cdots T_{k-2})y_kT_{k-1}y_k=
\]
\[
T_1(T_2\ldots T_{k-1})(T_1  \cdots T_{k-2})T_{k-1}y_kT_{k-1}y_k=q(T_1  \cdots T_{k-1})(T_1 \cdots T_{k-2})y_{k-1}y_{k}.
\]
Conversely, let us prove that \eqref{eq:phirel} imply \eqref{eq:Tyrel}. Define
\begin{equation}
\label{yidef}
y_i=q^{i-k}T_{i-1}^{-1}\cdots T_1^{-1}\varphi T_{k-1}\cdots T_{i}.
\end{equation}
Then, clearly, $T_{i}y_{i+1}T_{i}=qy_i.$ If $j>i$ then
\[
y_iT_j=q^{i-k}T_{i-1}^{-1}\cdots T_1^{-1}\varphi T_{k-1}\cdots T_{i}T_{j}=q^{i-k}T_{i-1}^{-1}\cdots T_1^{-1}\varphi T_{j-1}T_{k-1}\cdots T_{i}=
\]
\[
q^{i-k}T_{i-1}^{-1}\cdots T_1^{-1}T_{j}\varphi T_{k-1}\cdots T_{i}=T_{j}y_i.
\]
If $j<i-1$, the proof of $y_iT_j=T_{j}y_i$ is similar. Finally,   
\[
y_1y_k=\varphi T_{k-1}\cdots T_{1} T_{k-1}^{-1}\cdots T_{1}^{-1}\varphi=\varphi T_{k-2}^{-1}\cdots T_{1}^{-1}T_{k-1}\cdots T_{2}\varphi, 
\]
\[
y_ky_1=T_{k-1}^{-1}\cdots T_{1}^{-1}\varphi^2 T_{k-1}\cdots T_{1} =T_{k-1}^{-1}\cdots T_{1}^{-1}T_1\varphi^2 T_{k-2}\cdots T_{1} =
\]
\[
T_{k-1}^{-1}\cdots T_{2}\varphi^2 T_{k-2}\cdots T_{1}=\varphi T_{k-2}^{-1}\cdots T_{1}^{-1}T_{k-1}\cdots T_{2}\varphi.
\]
The proof of other commutation relations $y_iy_j=y_jy_i$ is similar.
\end{proof}



\begin{lemma}
\label{th:Iwahori}
The algebra $\BA_q$ is generated by $T_1,\ldots,T_{k-1},d_{+},d_{-}$ modulo relations \eqref{eq:Trel}, 
 all relations in \eqref{eq:Tdminusrel} and \eqref{eq:Tdplusrel} not involving $y_i$, and two additional relations:
\begin{equation}
\label{eq:phidplusdminus}
q \varphi d_- = d_- \varphi T_{k-1},\quad T_1 \varphi d_+ = q d_+ \varphi,
\end{equation}
where $\varphi=\frac{1}{q-1}[d_{+},d_{-}]$.
 All other relations follow from these. 
\end{lemma}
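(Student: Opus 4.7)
The plan is to prove the lemma by constructing mutually inverse algebra homomorphisms between $\BA_q$ and the algebra $\BA'_q$ defined by the new presentation. For the direction $\BA'_q\to\BA_q$, only the relations \eqref{eq:phidplusdminus} are not already present in $\BA_q$, so I need only verify these. By \eqref{eq:commrel}, the element $\varphi:=\frac{1}{q-1}[d_+,d_-]$ in $\BA_q$ coincides with $T_1\cdots T_{k-1}y_k$, the element used in Lemma \ref{lem:Iwahori}. The relation $q\varphi d_- = d_-\varphi T_{k-1}$ then follows by the direct computation
\[
d_-\varphi T_{k-1} = d_- T_1\cdots T_{k-2}(T_{k-1} y_k T_{k-1}) = q\, d_- T_1\cdots T_{k-2} y_{k-1},
\]
applying \eqref{eq:Tyrel} and then commuting $d_-$ past the remaining $T_i$'s and $y_{k-1}$ via \eqref{eq:Tdminusrel} to produce $q\varphi d_-$. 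The second relation $T_1\varphi d_+ = q d_+\varphi$ is verified by an analogous calculation using \eqref{eq:Tdplusrel}.

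For the reverse direction $\BA_q\to\BA'_q$, I define $y_i$ in $\BA'_q$ via formula \eqref{yidef} of Lemma \ref{lem:Iwahori} and verify all defining relations of $\BA_q$. The commutator relation \eqref{eq:commrel} holds tautologically. The mixed relations $d_- y_i = y_i d_-$ and $d_+ y_i = T_1\cdots T_i y_i T_i^{-1}\cdots T_1^{-1} d_+$ reduce to straightforward manipulations using \eqref{yidef}, \eqref{eq:phidplusdminus}, and the braid relations; for instance, substituting \eqref{yidef} into $d_- y_i$ and commuting $d_-$ through each factor via \eqref{eq:Tdminusrel} and \eqref{eq:phidplusdminus} produces $y_i d_-$. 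The affine Hecke algebra relations \eqref{eq:Trel}, \eqref{eq:Tyrel}, together with the commutations between $y_i$'s and $T_j$'s, then follow via Lemma \ref{lem:Iwahori} once I establish the two identities of \eqref{eq:phirel} inside $\BA'_q$.

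The main step is therefore to verify \eqref{eq:phirel}. The relation $\varphi T_i = T_{i+1}\varphi$ for $i\le k-2$ is immediate: expanding $\varphi = \frac{1}{q-1}(d_+d_- - d_-d_+)$ and commuting $T_i$ through each $d_\pm$ via \eqref{eq:Tdminusrel} and \eqref{eq:Tdplusrel} yields $T_{i+1}\varphi$ on both summands. For the more delicate identity $\varphi^2 T_{k-1} = T_1\varphi^2$, I plan to compute $\varphi[d_+,d_-]$ in parallel on both terms of the commutator. Using $\varphi d_+ = q T_1^{-1} d_+\varphi$ and $\varphi d_- = q^{-1} d_-\varphi T_{k-1}$ (both consequences of \eqref{eq:phidplusdminus}), I expect
\[
\varphi d_+ d_- = T_1^{-1} d_+ d_- \varphi T_{k-1}, \qquad \varphi d_- d_+ = T_1^{-1} d_- d_+ \varphi T_{k-1},
\]
where the second identity additionally uses $d_- T_1^{-1} = T_1^{-1} d_-$, valid for $k\ge 2$ (precisely the regime in which the claim makes sense). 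Subtracting yields $\varphi[d_+,d_-] = T_1^{-1}[d_+,d_-]\varphi T_{k-1}$, i.e.\ $(q-1)\varphi^2 = T_1^{-1}(q-1)\varphi^2 T_{k-1}$, which rearranges to $T_1\varphi^2 = \varphi^2 T_{k-1}$. The principal obstacle is recognizing this parallel simplification: expanding $[d_+,d_-]^2 T_{k-1}$ directly into four monomials is frustrating because $d_-$ does not commute with $T_{k-1}$ or $T_k$ in the available relations, but the symmetric sandwich of $[d_+,d_-]$ by $\varphi$ on the left and $\varphi T_{k-1}$ on the right makes the derivation compact.
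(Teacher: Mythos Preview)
Your proposal is correct and follows essentially the same strategy as the paper: reduce everything to Lemma~\ref{lem:Iwahori} by showing that $\varphi=\tfrac{1}{q-1}[d_+,d_-]$ satisfies the two relations \eqref{eq:phirel}, then use \eqref{yidef} to recover the $y_i$ and verify the remaining $d_\pm$--$y_i$ relations.

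The only notable difference is organizational. You explicitly treat both directions of the isomorphism $\BA_q'\leftrightarrow\BA_q$, whereas the paper only writes out the direction $\BA_q'\to\BA_q$ (that the $\BA_q$ relations follow from the smaller set), leaving the converse implicit. For the crucial identity $\varphi^2 T_{k-1}=T_1\varphi^2$, the paper multiplies $[d_+,d_-]$ on the right by $\varphi T_{k-1}$ and pushes through to obtain $T_1\varphi[d_+,d_-]$; you instead multiply $[d_+,d_-]$ on the left by $\varphi$ and push through to obtain $T_1^{-1}[d_+,d_-]\varphi T_{k-1}$. These are mirror-image computations using the same two ingredients \eqref{eq:phidplusdminus} plus the index shifts $d_+T_{k-1}=T_kd_+$ and $d_-T_1=T_1d_-$, so the content is the same. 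Your remark that the naive expansion of $[d_+,d_-]^2T_{k-1}$ into four monomials is awkward, and that sandwiching by $\varphi$ is the right move, is exactly the insight the paper's computation encodes as well.
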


\begin{proof}
Let us check that $\varphi$ satisfies \eqref{eq:phirel} on $V_k$. Clearly, for $i\le k-2$ one has 
\[
(d_{+}d_{-}-d_{-}d_{+})T_i=d_{+}T_{i}d_{-}-d_{-}T_{i+1}d_{+}=T_{i+1}(d_{+}d_{-}-d_{-}d_{+}).
\]
Furthermore, 
\[
d_{+}d_{-}\varphi T_{k-1}=q d_{+}\varphi d_{-}=T_1\varphi d_{+}d_{-},
\]
and
\[
d_{-}d_{+}\varphi T_{k-1}=q^{-1} d_{-}T_1 \varphi d_{+} T_{k-1}=q^{-1} T_{1}d_{-}\varphi T_{k} d_{+}=T_1\varphi d_{-}d_{+},
\]
so 
\[
\varphi^2 T_{k-1}=\frac{1}{q-1}(d_{+}d_{-}-d_{-}d_{+})\varphi T_{k-1}=\frac{1}{q-1}T_{1}\varphi (d_{+}d_{-}-d_{-}d_{+})=T_1\varphi^2.
\]
Therefore by Lemma \ref{lem:Iwahori}  we can define $y_i$ and check the commutation relations \eqref{eq:Tyrel}.
Let us check the remaining relations:
\[
d_{-}y_i=d_{-}T_{i-1}^{-1}\cdots T_{1}^{-1}\varphi T_{k-1}\cdots T_{i}=T_{i-1}^{-1}\cdots T_{1}^{-1}d_{-}\varphi T_{k-1}\cdots T_{i}=
\]
\[
T_{i-1}^{-1}\cdots T_{1}^{-1}\varphi d_{-}T_{k-2}\cdots T_{i}=T_{i-1}^{-1}\cdots T_{1}^{-1}\varphi  T_{k-2}\cdots T_{i}d_{-}=y_id_{-}.
\]
The last identity $d_{+}y_i=T_1 \cdots T_i y_i T_i^{-1} \cdots T_1^{-1} d_+$ is also straightforward, see \cite[Lemma 5.4]{carlsson2015proof}.
\end{proof}

 \subsection{$\BA_{q,t}$}
The ``double algebra'' $\BA_{q,t}$ depends on two parameters $q,t\in\BQ(q,t)$ and is obtained from two copies of $\BA_{q}$ by imposing more relations: 
\begin{definition}
$\BA_{q,t}$ is the $\BQ(q,t)$-linear algebra generated by a collection of orthogonal idempotents labelled by $\BZ_{\geq 0}$ and elements:
\[
d_+,d_+^*:k\to k+1,\; d_-:k\to k-1,\; T_i:k\to k \quad(1\leq i<k),\; y_i,z_i:k\to k \quad(1\leq i\leq k)
\]
subject to the
\begin{itemize}
\item relations of $\BA_q$ for $d_-, d_+, T_i, y_i$,
\item relations of $\BA_{q^{-1}}$ for $d_-, d_+^*, T_i^{-1}, z_i$,
\end{itemize}
and 
\begin{equation}\label{eq:yzrel}
d_+ z_i = z_{i+1} d_+, \quad d_+^* y_i = y_{i+1} d_+^* \quad(1\leq i \leq k),\quad z_1 d_+ = - t q^{k+1} y_1 d_+^*.
\end{equation}
\end{definition}

\begin{remark}
One is tempted to say that the generators $T_i$, $y_i$ and $z_i$ form some sort of double affine Hecke algebra as in Remark \ref{rem:heckecat}, but this is not the case. The problem stems from the fact that double affine Hecke algebras of \cite{cherednik2005double} do not embed into one another in the way that the affine Hecke algebras do. There is a way, 
however, to relate $\BA_{q,t}$ to double affine Hecke algebras by making sense of limits of the form $\lim_{n\to \infty} e_n \mathrm{DAHA}_{n+k} e_n$, where $e_n\in \mathrm{DAHA}_{n+k}$ is the partial symmetrization operator on indices $k+1$, $k+2$, \ldots, $k+n$.
\end{remark} 

In what follows we will need a certain subalgebra of $\BA_{q,t}$ which, nevertheless, contains an isomorphic copy of $\BA_{q,t}$.



\begin{definition}
The algebra $\BB_{q,t}$ is generated by a collection of orthogonal idempotents labelled by $\BZ_{\geq 0}$, generators  $d_{+},d_{-},T_i$ and $z_i$ modulo relations:
\[
(T_i-1)(T_i+q)=0, \quad T_i T_{i+1} T_i = T_{i+1} T_i T_{i+1}, \quad T_i T_j = T_j T_i \quad (|i-j|>1),
\]
\[
T^{-1}_i z_{i+1} T^{-1}_i = q^{-1} z_{i} \; (1\leq i\leq k-1),
\]
\[
z_i T_j = T_j z_i  \; (i\notin \{j, j+1\}),\;
z_i z_j = z_j z_i \; (1\leq i,j \leq k),
\]
\[
d_-^2 T_{k-1} = d_-^2,\;  d_- T_i = T_i d_-\; (1\leq i \leq k-2),\;
\]
\[
T_1 d_+^2 = d_+^2,\; d_+ T_i = T_{i+1} d_+ \; (1\leq i \leq k-1),
\]
\[
q \varphi d_- = d_- \varphi T_{k-1},\quad T_1 \varphi d_+ = q d_+ \varphi,
\]
\[
z_i d_- = d_- z_i,\quad d_+ z_i = z_{i+1} d_+,
\]
\[
z_1 (q d_+ d_- - d_- d_+) = qt (d_+ d_- - d_- d_+) z_k.
\]
\
\end{definition}

\begin{remark}
By \eqref{eq:commrel}, one can define the elements $y_i\in \BB_{q,t}$ and prove that $y_i,T_i,d_+$ and $d_-$ generate a copy of $\BA_{q}$.
\end{remark}

\begin{proposition}
There is a homomorphism  $\alpha:\BB_{q,t}\to \BA_{q,t}$ which sends $d_{+},d_{-},T_i$ and $z_i$ to the 
corresponding generators of $\BA_{q,t}$.
\end{proposition}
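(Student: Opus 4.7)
The strategy is to define $\alpha$ on generators by $d_{+}\mapsto d_{+}$, $d_{-}\mapsto d_{-}$, $T_i\mapsto T_i$, $z_i\mapsto z_i$, and to verify that each defining relation of $\BB_{q,t}$ is satisfied in $\BA_{q,t}$. The overwhelming majority of these relations are essentially free. The Hecke relations for $T_i$, the affine relation $T_i^{-1}z_{i+1}T_i^{-1}=q^{-1}z_i$, all commutations among the $z_i$'s and between $z_i$ and $T_j$, and the structural relations $d_-^2 T_{k-1}=d_-^2$, $T_1 d_+^2=d_+^2$, $d_-T_i=T_i d_-$, $d_+T_i=T_{i+1}d_+$, $z_i d_-=d_- z_i$ all appear among the $\BA_q$ relations for $d_-,d_+,T_i,y_i$ and the $\BA_{q^{-1}}$ relations for $d_-,d_+^*,T_i^{-1},z_i$ built into $\BA_{q,t}$, while $d_+ z_i=z_{i+1}d_+$ is part of the mixing list \eqref{eq:yzrel}. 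The two $\varphi$-relations $q\varphi d_-=d_-\varphi T_{k-1}$ and $T_1\varphi d_+=qd_+\varphi$, with $\varphi=\tfrac{1}{q-1}[d_+,d_-]$, follow from the $\BA_q$ relations by Lemma~\ref{th:Iwahori}.

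Thus everything reduces to checking the single mixed relation
\[
z_1(qd_+d_- - d_-d_+) \;=\; qt\,(d_+d_- - d_-d_+)\,z_k \quad\text{on every } V_k.
\]
My plan is to use the key identity $z_1 d_+ = -tq^{k+1}\, y_1\, d_+^*$ from \eqref{eq:yzrel} twice, while keeping careful track of the source subspace. For the summand $z_1 d_+ d_-|_{V_k}$, the inner $d_+$ has source $V_{k-1}$, so \eqref{eq:yzrel} with $k$ replaced by $k-1$ gives $-tq^k\, y_1\, d_+^* d_-|_{V_k}$. For the summand $z_1 d_- d_+|_{V_k}$, I first push $z_1$ past $d_-$ using $z_1 d_-=d_- z_1$, then apply \eqref{eq:yzrel} with source $V_k$, and finally push $y_1$ back past $d_-$ using the $\BA_q$ relation $d_- y_1=y_1 d_-$, producing $-tq^{k+1}\, y_1\, d_- d_+^*|_{V_k}$. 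Forming the combination $q\cdot(\text{first})-(\text{second})$ yields $-tq^{k+1}\, y_1\,(d_+^* d_- - d_- d_+^*)|_{V_k}$.

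Next, the $\BA_{q^{-1}}$ analog of \eqref{eq:commrel} identifies $d_+^* d_- - d_- d_+^* = (q^{-1}-1)\,T_1^{-1}\cdots T_{k-1}^{-1} z_k$ on $V_k$, while the $\BA_q$ form of \eqref{eq:commrel} rewrites the right-hand side of the target identity as $qt(q-1)\,T_1\cdots T_{k-1}\, y_k\, z_k$. The whole question then collapses to the purely $\BA_q$ identity
\[
q^{k-1}\, y_1\, T_1^{-1}\cdots T_{k-1}^{-1} \;=\; T_1\cdots T_{k-1}\, y_k,
\]
which I would verify at once by substituting $y_1=q^{1-k}\, T_1\cdots T_{k-1}\, y_k\, T_{k-1}\cdots T_1$ from Lemma~\ref{lem:Iwahori} and using the telescoping cancellation $(T_{k-1}\cdots T_1)(T_1^{-1}\cdots T_{k-1}^{-1})=1$. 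The main obstacle is purely organizational: the coefficient $q^{k+1}$ in \eqref{eq:yzrel} depends on the source subspace, so one must carefully track which $V_m$ each of $y_1$, $z_1$ and $d_+^*$ acts on throughout the computation so that the powers of $q$ line up in the end.
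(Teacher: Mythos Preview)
Your proof is correct and follows essentially the same route as the paper's: both identify the last mixed relation as the only nontrivial check, rewrite $z_1(qd_+d_- - d_-d_+)$ as $q(z_1d_+)d_- - d_-(z_1d_+)$ via $z_1 d_- = d_- z_1$, substitute $z_1 d_+ = -tq^{k+1} y_1 d_+^*$ (with the appropriate shift in $k$), factor out $y_1$ using $d_- y_1 = y_1 d_-$, apply the $\BA_{q^{-1}}$ commutator formula for $[d_+^*,d_-]$, and finish with the affine Hecke identity $q^{k-1} y_1 T_1^{-1}\cdots T_{k-1}^{-1} = T_1\cdots T_{k-1} y_k$. Your write-up is in fact more explicit than the paper's about the $k$-dependent bookkeeping, which is helpful.
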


\begin{proof}
Let us check that the  last defining relation for $\BB_{q,t}$ holds in $\BA_{q,t}$:
\[
z_1 (q d_+ d_- - d_- d_+)=qz_1d_+d_{-}-z_1d_{-}d_{+}=q(z_1d_+)d_{-}-d_{-}(z_1d_{+}).
\]
We can replace $z_1d_+$ by a multiple of $y_1 d_+^*$ and obtain:
\[
q(-tq^{k})y_1d_+^*d_{-}-d_{-}(-tq^{k+1})y_1d_+^*=-tq^{k+1}y_1[d_+^*,d_{-}].
\]
Since  $d_-, d_+^*, T_i^{-1}, z_i$ satisfy the relations for $\BA_{q^{-1}}$, by \eqref{eq:commrel} we get:
\[
[d_+^*, d_-]= (q^{-1}-1) T_1^{-1} \cdots T_{k-1}^{-1} z_k,
\]
so
\[
-tq^{k+1}y_1[d_+^*,d_{-}]=-tq^{k+1}(q^{-1}-1)y_1 T_1^{-1} \cdots T_{k-1}^{-1} z_k=
\]
\[
tq^{k}(q-1)y_1 T_1^{-1} \cdots T_{k-1}^{-1} z_k =
qt(q-1)T_1\cdots T_{k-1}y_kz_k=qt[d_{+},d_{-}]z_{k}.
\]
It follows from the definition, Theorem \ref{th:Iwahori} that 
 all other defining relations of $\BB_{q,t}$ are satisfied
in $\BA_{q,t}$.
\end{proof}

\begin{theorem}
There is an algebra homomorphism $\beta:\BA_{q,t}\to \BB_{q,t}$ such that 
\[
\beta(T_i)=T_i,\;\beta(d_-)=d_-,\;\beta(d_+)=d_+,\;\beta(d_+^*)=q^{-k}z_1d_{+}
\]
and $\beta(z_1)=-qt y_1z_1$. There is a chain of homomorphisms:
\[
\BA_{q,t}\xrightarrow{\beta}\BB_{q,t}\xrightarrow{\alpha}\BA_{q,t}.
\]
\end{theorem}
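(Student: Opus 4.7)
The plan is to construct $\beta$ by extending the given values on $T_i$, $d_\pm$, $d_+^*$, $z_1$ to all generators of $\BA_{q,t}$ and verifying that all defining relations of $\BA_{q,t}$ hold in $\BB_{q,t}$ under $\beta$; the chain $\BA_{q,t}\xrightarrow{\beta}\BB_{q,t}\xrightarrow{\alpha}\BA_{q,t}$ then follows by composition with the homomorphism $\alpha$ from the preceding proposition. To extend $\beta$, I set $\beta(y_i)$ equal to the corresponding element of the $\BA_q$-subalgebra of $\BB_{q,t}$ (which exists by the remark after the definition of $\BB_{q,t}$), and for $i\geq 2$ set $\beta(z_i)=q^{-(i-1)}T_{i-1}\cdots T_1\,\beta(z_1)\,T_1\cdots T_{i-1}$, as forced by the $\BA_{q^{-1}}$-relation $T_i^{-1}z_{i+1}T_i^{-1}=q^{-1}z_i$.

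The relations of $\BA_q$ for $(d_+,d_-,T_i,y_i)$ hold automatically in $\BB_{q,t}$ by the remark. For the relations of $\BA_{q^{-1}}$ for $(d_+^*,d_-,T_i^{-1},z_i)$, I apply the analog of Lemma \ref{th:Iwahori} obtained by the substitution $q\leftrightarrow q^{-1}$, $y\leftrightarrow z$, $T_i\leftrightarrow T_i^{-1}$, $d_+\leftrightarrow d_+^*$. This reduces the task to: the Hecke and $d_-$ relations (immediate from $\BB_{q,t}$); the relations $T_1^{-1}\beta(d_+^*)^2=\beta(d_+^*)^2$ and $\beta(d_+^*)T_i^{-1}=T_{i+1}^{-1}\beta(d_+^*)$, both direct from $T_1 d_+^2=d_+^2$, $d_+T_i=T_{i+1}d_+$, $d_+z_1=z_2d_+$, and the commutation of $z_1$ with $T_j$ for $j\geq 2$; and the two relations analogous to \eqref{eq:phidplusdminus} for $\varphi^*=\frac{q}{1-q}[d_+^*,d_-]$.

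The key computation is the formula $\beta(\varphi^*)|_{V_k}=-tq^{2-k}\varphi z_k$, obtained from $z_1 d_-=d_-z_1$ together with the mixed relation $z_1(qd_+d_--d_-d_+)=qt(d_+d_--d_-d_+)z_k$ defining $\BB_{q,t}$. Combined with the clean identity $\varphi z_1=z_2\varphi$ in $\BB_{q,t}$ (itself a consequence of $d_+z_1=z_2d_+$ and $z_1d_-=d_-z_1$), one deduces $z_1 d_+\varphi=T_1^{-1}\varphi z_1 d_+$, after which the two $\varphi^*$-relations reduce to the $\varphi$-relations of $\BA_q$ (Lemma \ref{th:Iwahori}) combined with $T_i^{-1}z_{i+1}T_i^{-1}=q^{-1}z_i$ and the commutation of $z_k$ with $d_-$ and with $T_j$ for $j\leq k-2$.

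Finally, the relations \eqref{eq:yzrel} are handled individually. The relation $z_1 d_+=-tq^{k+1}y_1 d_+^*$ becomes the tautology $-qty_1 z_1 d_+=-tq\,y_1 z_1 d_+$ after substituting images. The relation $d_+z_i=z_{i+1}d_+$ follows by induction on $i$: the base case uses $d_+y_1=T_1y_1T_1^{-1}d_+$, $d_+z_1=z_2d_+$, and $T_1^{-1}z_2T_1^{-1}=q^{-1}z_1$, while the inductive step is formal via $d_+T_i=T_{i+1}d_+$. The relation $d_+^*y_i=y_{i+1}d_+^*$ reduces to $z_1 d_+y_i=y_{i+1}z_1d_+$, which via $d_+y_i=T_1\cdots T_iy_iT_i^{-1}\cdots T_1^{-1}d_+$ together with the identity $z_1 d_+\varphi=T_1^{-1}\varphi z_1 d_+$ collapses to an identity in the $\BA_q$-subalgebra of $\BB_{q,t}$. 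The main obstacle will be bookkeeping around the mixed $\BB_{q,t}$-relation, which is the sole bridge converting $z_1$-expressions into $z_k$-expressions and thereby tying the $y$- and $z$-halves of the theory together.
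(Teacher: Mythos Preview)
Your proposal is correct and follows essentially the same route as the paper's own proof. Both arguments hinge on the formula $\varphi^*=-tq^{2-k}\varphi z_k$ (derived from the mixed $\BB_{q,t}$-relation), the identity $\varphi z_1=z_2\varphi$, and the consequence $z_1 d_+\varphi = T_1^{-1}\varphi z_1 d_+$ (equivalently $\beta(d_+^*)\varphi = T_1^{-1}\varphi\beta(d_+^*)$), from which the $\varphi^*$-relations and the relations \eqref{eq:yzrel} are checked one by one. The only cosmetic difference is that you \emph{define} $\beta(z_i)$ via the Hecke relation from $\beta(z_1)=-qty_1z_1$ and verify $d_+\beta(z_i)=\beta(z_{i+1})d_+$ by induction on $i$, whereas the paper \emph{computes} $\beta(z_i)=-qt\,T_{i-1}\cdots T_1 y_1 T_1^{-1}\cdots T_{i-1}^{-1} z_i$ from the Iwahori--Matsumoto formula for $z_i$ in terms of $\varphi^*$ and then checks the relation directly; these are equivalent once one observes (as you implicitly need) that the $z_1$ produced by the $\varphi^*$-formula indeed equals $-qty_1z_1$.
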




\begin{proof}
It is clear that all defining relations of $\BA_{q}$ are satisfied for $T_i,d_-,d_+$ and hence for $y_i$. We proceed to check the relations of $\BA_{q^{-1}}$ for $T_i^{-1},d_-,\beta(d_+^*), z_i$ in $\BB_{q,t}$. In order to apply Lemma \ref{lem:Iwahori} we will need the following computation:
\[
(q^{-1}-1) \varphi^* = [\beta(d_+^*),d_-]
=q^{1-k}z_1d_{+}d_{-}-q^{-k}z_1 d_{-}d_{+}=
q^{-k}z_1(qd_{+}d_{-}-d_{-}d_{+})
\]
\[
=t q^{1-k}(d_+ d_- - d_- d_+) z_k= t q^{1-k}(q-1) \varphi z_k.
\]
Thus we have
\[
\varphi^* = -t q^{2-k} \varphi z_k,
\]
so that we can check \eqref{eq:phidplusdminus}:
\[
q^{-1} \varphi^* d_- = -t q^{2-k} \varphi z_{k-1} d_-= -t q^{1-k}  d_- \varphi T_{k-1} z_{k-1} = d_-\varphi^*  T_{k-1}^{-1},
\]
\[
T_1^{-1} \varphi^* \beta(d_+^*) = - t q^{1-k} T_1^{-1} \varphi z_{k+1} q^{-k} z_1 d_+ = -t q^{1-2k} T_1^{-1} \varphi z_1 d_+ z_k
\]
\[
= -t q^{1-2k} T_1^{-1} z_2 \varphi d_+ z_k = -t q^{-2k} z_1 T_1 \varphi d_+ z_k = -t q^{1-2k} z_1 d_+ \varphi z_k = q^{-1} \beta(d_+^*) \varphi^*,
\]
where we have used the following identity between elements $k\to k$ for $k\geq 2$:
\begin{equation}\label{eq:phi z1}
\varphi z_1 = \frac{1}{q-1} (d_+ d_- - d_- d_+) z_1 = \frac{1}{q-1} z_2 (d_+ d_- - d_- d_+) = z_2\varphi.
\end{equation}
Among prerequisites for Lemma \ref{lem:Iwahori} it remains to check the identities between $\beta(d_+^*)$ and $T_i$. We have
\[
\beta(d_+^*) T_i = q^{-k} z_1 d_+ T_i = q^{-k} z_1 T_{i+1} d_+ = q^{-k} T_{i+1} d_+ = T_{i+1} \beta(d_+^*),
\]
\[
\beta(d_+^*)^2 =  q^{-2k-1} z_1 d_+ z_1 d_+ = q^{-2k-1} z_1 z_2 d_+^2,
\]
hence
\[
T_1 \beta(d_+^*)^2 =  q^{-2k-1} z_1 z_2 T_1 d_+^2 = q^{-2k-1} z_1 z_2 d_+^2 = \beta(d_+^*)^2.
\]
Thus we can apply Lemma \ref{lem:Iwahori} and deduce that the relations of 
$\BA_{q^{-1}}$ for $T_i^{-1},d_-,\beta(d_+^*), z_i$ are satisfied. 

It remains to check relations \eqref{eq:yzrel} for $d_+, y_i, \beta(d_+^*), \beta(z_i)$. We have
\[
\beta(z_k) = T_{k-1} \cdots T_1 \varphi^* = -t q^{2-k} T_{k-1} \cdots T_1 \varphi z_k.
\]
Therefore
\[
\beta(z_i)= -t q^{2-k} T_{i-1}\cdots T_{1} \varphi T_{k-1} \cdots T_i z_i = -qt T_{i-1}\cdots T_{1} y_1 T_1^{-1} \cdots T_{i-1}^{-1} z_i.
\]
Thus we have
\[
d_+ \beta(z_i) = -qt d_+ T_{i-1}\cdots T_{1} y_1 T_1^{-1} \cdots T_{i-1}^{-1} z_i = -qt T_{i}\cdots T_{1} y_1 T_1^{-1} \cdots T_{i}^{-1} z_{i+1}
\]
\[
 = \beta(z_{i+1}) d_+. 
\]
Using Lemma \ref{lem:Iwahori} and \eqref{eq:phi z1} we obtain
\[
\beta(d_+^*) \varphi = q^{-k} z_1 d_+ \varphi = q^{-1-k} z_1 T_1 \varphi d_+ = q^{-k} T_1^{-1} z_2 \varphi d_+ = q^{-k} T_1^{-1} \varphi z_1 d_+
\]
\[
 = T_1^{-1}\varphi \beta(d_+^*),
\]
which implies
\[
\beta(d_+^*) y_i = \beta(d_+^*) T_{i-1}^{-1}\cdots T_{1}^{-1}\varphi T_{k-1}\cdots T_{i} = T_i^{-1}\cdots T_1^{-1}\varphi T_{k}\cdots T_{i+1} \beta(d_+^*) 
\]
\[
= y_{i+1} \beta(d_+^*).
\]
Finally, we have
\[
\beta(z_1) d_+ = -q t y_1 z_1 d_+ = -t q^{k+1} y_1 \beta(d_+^*).
\]
Thus we finished verifying \eqref{eq:yzrel}.
\end{proof}

\subsection{Gradings}

The algebras $\BA_{q,t}$ and $\BB_{q,t}$ are triply graded. The grading of $d_+$ is $(1,0,0)$, the grading of $d_-$ is $(0,1,0)$,
and the grading of $T_i$ is $(0,0,0)$. The commutation relations imply that $y_i$ have grading $(1,1,0)$. Next, we require that 
$d_+^*$ has grading $(0,0,1)$ and $z_i$ have grading $(0,1,1)$. It is easy to check that all relations are tri-homogeneous with 
respect to these gradings. In particular, the degrees of $z_1d_{+}$ and $y_1d_+^*$ are both equal to $(1,1,1)$. 

In what follows we will use two specializations of this triple grading. The first projection $(a,b,c)\mapsto a-b+c$ assigns to $d_+$
and $d_{+}^*$ degree 1, $d_-$ has degree $(-1)$ and $y_i,z_i,T_i$ all have degree 0. This is just the standard grading which equals $k$ in the idempotent $e_k$.

The more interesting projection $(a,b,c)\to a+b+c$ assigns to $d_+,d_-,d_+^*$ degree 1, and to $y_i,z_i$ degree 2. 

\subsection{Polynomial representation}

Denote by $\Lambda$ the ring of symmetric functions in $x_1,x_2,\ldots$. Following \cite{carlsson2015proof} we introduce spaces 
\[
V_k = \Lambda \otimes \BC(q,t)[y_1,\cdots,y_k],\quad V_\bullet = \bigoplus_{k\geq 0} V_k.
\]
One of the results of \cite{carlsson2015proof} is the following:
\begin{proposition}
\label{halfalgprop}
There is an action of $\BA_{q,t}$ on $V_\bullet$ in which
\[
T_i F=\frac{(q-1)y_{i+1}F+(y_{i+1}-qy_i)s_iF}{y_{i+1}-y_{i}},\quad y_i F=y_i\cdot F,
\]
\[
d_- F = -\Res_{y_k} F[X-(q-1) y_k] \pExp[-y_k^{-1} X] d y_k \quad(F\in V_k),
\]
\[
d_+ F = T_1 T_2 \cdots T_k (F[X+(q-1) y_{k+1}]).
\]
\[
d_{+,CM}^* F=\gamma F[X+(q-1)y_{k+1}],
\]
where $\gamma(y_i)=y_{i+1}$ and $\gamma(y_{k+1})=ty_1$.
Furthermore, we have a unique isomorphism 
\[
V_\bullet=\BA_q \Id_0,
\]
of left $\BA_q$-modules in which $1\in V_0$ maps to $\Id_0$.
\end{proposition}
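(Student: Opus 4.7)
The statement decomposes into two essentially independent pieces: the verification that the displayed formulas satisfy all defining relations of $\BA_{q,t}$, and the identification of $V_\bullet$ with the left $\BA_q$-module $\BA_q \Id_0$. I would handle the $\BA_q$-relations first. The operators $T_i$ are the standard Demazure--Lusztig operators, for which \eqref{eq:Trel} and \eqref{eq:Tyrel} are classical; the commuting relations in \eqref{eq:Tdminusrel} and \eqref{eq:Tdplusrel} that do not involve $y_i$ reduce to the observation that the plethystic shift $X\mapsto X+(q-1)y_{k+1}$ does not touch the variables $y_1,\ldots,y_k$ and so commutes with $T_i$ in the appropriate range. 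The boundary identities $d_-^2 T_{k-1} = d_-^2$ and $T_1 d_+^2 = d_+^2$ are, respectively, a symmetry property of the double residue in $y_{k-1}, y_k$ against $\pExp[-y_k^{-1}X]$, and a braid-word rearrangement of $(T_1\cdots T_{k+1})(T_1\cdots T_k)$ that places $T_1$ at the left.

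The central computational obstacle is \eqref{eq:commrel}, the commutator formula $[d_+,d_-]=(q-1)T_1\cdots T_{k-1} y_k$. I would apply both compositions to a test $F\in V_k$, commute the Hecke product $T_1\cdots T_k$ past the residue and the two plethystic substitutions, and show that the difference collapses to a single residue at the coincidence of the variable created by $d_+$ and the variable consumed by $d_-$; the residue then matches the claimed right-hand side once the remaining Hecke operators are regrouped. Once \eqref{eq:commrel} is proved, the extension to $\BA_{q,t}$ is shorter: the $\BA_{q^{-1}}$-relations for $(T_i^{-1}, d_-, d_+^*, z_i)$ follow from the argument formally dual to the $\BA_q$ case (the sign change in $\pExp$ and the twist $\gamma$ implement $q\mapsto q^{-1}$), and the cross relations \eqref{eq:yzrel} become a direct verification after $z_i$ is recovered from $z_1 d_+ = -tq^{k+1} y_1 d_+^*$ together with the $T_i$-intertwining.

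For the isomorphism $V_\bullet \cong \BA_q\Id_0$, define $\phi\colon \BA_q\Id_0 \to V_\bullet$ by $\phi(R)=R\cdot 1$. Surjectivity is by induction on $k$: the element $d_+^k(1)$ lies in $V_k$, and acting by the affine Hecke algebra generated by $T_i, y_i$ produces every polynomial in $y_1,\ldots,y_k$ tensored with the currently accessible subring of $\Lambda$; combining with applications of $d_-$, which by the expansion of $\pExp[-y_k^{-1}X]$ inject power-sum multiplications on the $x$-side, one recovers all of $V_k = \Lambda\otimes \BC(q,t)[y_1,\ldots,y_k]$. Injectivity is proved by extracting a PBW-type normal form in $\BA_q\Id_0$: using Lemma \ref{lem:Iwahori} one replaces the $y_i$'s by $\varphi=T_1\cdots T_{k-1} y_k$, so that any element can be written as a sum of words of the shape $d_+^{a}\,(\text{finite Hecke word})\,d_-^{b}$, and one then shows the resulting normal-form monomials map to linearly independent elements of $V_\bullet$ by tracking leading behavior in the bigrading by $y$- and $\Lambda$-degrees. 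The main difficulties I expect are the commutator computation and controlling the cross terms in the normal-form argument.
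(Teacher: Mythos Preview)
The paper does not prove this proposition: it is quoted verbatim as ``one of the results of \cite{carlsson2015proof}'' and used as input for the rest of the paper. So there is no proof here to compare your proposal against.

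That said, your outline is broadly in the spirit of how \cite{carlsson2015proof} proceeds, namely direct verification of the defining relations with the commutator \eqref{eq:commrel} as the central computation. Two places where your sketch is too optimistic. First, your claim that the $\BA_{q^{-1}}$-relations for $(T_i^{-1}, d_-, d_+^*, z_i)$ are ``formally dual'' via ``the sign change in $\pExp$'' does not match the actual formula: $d_{+,CM}^*$ involves no $\pExp$ factor and no Hecke symmetrization, only the cyclic shift $\gamma$, so the verification of $[d_+^*,d_-]=(q^{-1}-1)T_1^{-1}\cdots T_{k-1}^{-1} z_k$ is a separate residue computation rather than a formal consequence of the $\BA_q$ case. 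Second, your injectivity argument for $V_\bullet\cong \BA_q\Id_0$ via an abstract PBW normal form in $\BA_q$ is the weakest link; proving that such a normal form exists in $\BA_q$ is essentially as hard as the isomorphism itself. In \cite{carlsson2015proof} this is handled instead by a direct inductive argument tracking bidegrees in $V_\bullet$ and using the explicit operator formulas, which avoids having to control the algebra abstractly.
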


Consider the space 
\[
W_\bullet :=\bigoplus W_k,\quad W_k=(y_1\cdots y_k)^{-1}V_k
\]
Clearly, $V_k\subset W_k$.

\begin{theorem}
The following statements hold:

1) The operators $T_i$, $d_{+}$, $d_{-}$ and $d_{+}^{*}=-(q t y_1)^{-1}d_{+,CM}^{*}$ can be naturally extended to the space $W_\bullet$ and  
define a representation of $\BA_{q,t}$.

2) In this representation, $\alpha(\BB_{q,t})$ preserves the subspace $V_\bullet\subset W_\bullet$, and hence defines a representation 
of $\BB_{q,t}$ in $V_\bullet$.

3) The composition $\alpha\beta(\BA_{q,t})$ also preserves $V_\bullet$, and hence defines a representation 
of $\BA_{q,t}$ in $V_\bullet$. This representation agrees with the one in Proposition \ref{halfalgprop}.
\end{theorem}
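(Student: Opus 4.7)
The plan is to address parts (1), (2), and (3) in sequence, with the bulk of the work in part (2).

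For part (1), the formulas in Proposition \ref{halfalgprop} for $T_i$, $d_+$, $d_-$, and $d_{+,CM}^*$ contain no negative powers of the $y_j$, so each extends verbatim from $V_\bullet$ to $W_\bullet$ and preserves it; the operator $d_+^* = -(qty_1)^{-1} d_{+,CM}^*$ is then a well-defined map $W_k\to W_{k+1}$ since $y_1^{-1}$ is freely available in the target. To check the defining relations of $\BA_{q,t}$ on $W_\bullet$, I would invoke Lemma \ref{th:Iwahori} (applied in the $\BA_{q^{-1}}$-variant) to reduce to the relations among $T_i, d_+, d_-, y_i$ (inherited from Proposition \ref{halfalgprop}) together with those involving $\varphi^* = (q^{-1}-1)^{-1}[d_+^*, d_-]$. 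Since $d_-$ commutes with $y_1$, substituting the definition yields $[d_+^*, d_-] = -(qty_1)^{-1}[d_{+,CM}^*, d_-]$, reducing the $\varphi^*$-relations to known identities involving $d_{+,CM}^*$ with a controlled manipulation of the scalar factor. The cross-relations \eqref{eq:yzrel} are then a direct verification.

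For part (2), the subalgebra $\alpha(\BB_{q,t})$ is generated by $d_+, d_-, T_i$ (which preserve $V_\bullet$ by Proposition \ref{halfalgprop}) together with the $z_i$. By the Hecke-type identity $z_{i+1} = q^{-1} T_i z_i T_i$, it suffices to show $z_1$ preserves $V_\bullet$. The key input is the relation $z_1 d_+ = -tq^{k+1} y_1 d_+^*$ from $\BA_{q,t}$: substituting $d_+^* = -(qty_1)^{-1} d_{+,CM}^*$ yields $z_1 d_+ = q^k d_{+,CM}^*$, showing that $z_1$ maps $d_+(V_k)\subset V_{k+1}$ into $V_{k+1}$. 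To extend to all of $V_{k+1}$, I would exploit the $\BA_q$-module generation $V_\bullet = \BA_q\cdot\Id_0$ and commute $z_1$ past $\BA_q$-generators using the $\BB_{q,t}$-relations: $z_1 d_- = d_- z_1$ and $z_1 T_j = T_j z_1$ for $j\geq 2$ preserve $z_1$; $T_1 z_1 T_1 = q z_2$ replaces it with $z_2$; and $d_+ z_i = z_{i+1} d_+$ shifts the index. The induction thus runs simultaneously over all $z_i$ and terminates either at the base case $z_i\Id_0 = 0$ or at an occurrence of $z_j d_+$ handled by the key input.

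For part (3), I would evaluate $\alpha\beta$ on each generator of $\BA_{q,t}$ and compare with the action of Proposition \ref{halfalgprop}. The generators $d_+, d_-, T_i$ are fixed by $\alpha\beta$, so agreement is trivial. For $d_+^*$, one has $\alpha\beta(d_+^*) = q^{-k} z_1 d_+$, which in the $W$-representation (and hence on $V_\bullet$ by part (2)) acts as $q^{-k}\cdot q^k d_{+,CM}^* = d_{+,CM}^*$, matching CM. For $z_1$, we have $\alpha\beta(z_1) = -qty_1 z_1$, which one checks agrees with CM by composing on the right with $d_+$ and invoking the same key input. The main obstacle is the extension argument in part (2): because $T_1 z_1 T_1 = q z_2$ forces a simultaneous induction on all $z_i$, one must verify that the induction closes without generating elements outside $V_\bullet$; the cleanest setup is a filtration of $V_\bullet$ by word length in a fixed set of $\BA_q$-generators, with $z_i\Id_0 = 0$ and the formula $z_1 d_+ = q^k d_{+,CM}^*$ providing termination.
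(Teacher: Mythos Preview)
Your treatment of parts (1) and (3) tracks the paper closely. For (1) the paper explicitly verifies that each operator sends $(y_1\cdots y_k)^{-1}V_k$ into the appropriate $(y_1\cdots y_{k'})^{-1}V_{k'}$ (the key point being that $y_1\cdots y_k$ is symmetric, so $T_i$ commutes past the denominator), and then defers the relation-checking to \cite{carlsson2015proof}; your outline via Lemma~\ref{th:Iwahori} is a reasonable alternative sketch at the same level of detail. For (3) the paper's argument is precisely your observation $\alpha\beta(d_+^*)=-qty_1 d_+^*=d_{+,CM}^*$; since $d_+,d_-,T_i$ are fixed by $\alpha\beta$, this single check suffices, and your separate verification for $z_1$ is redundant (the $z_i$ are determined by $d_+^*,d_-,T_i^{-1}$ via the $\BA_{q^{-1}}$ relations).

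Part (2) is where you take a genuinely different route. The paper does \emph{not} argue by induction on word length. Instead it computes $[d_+^*,d_-]F$ directly for $F\in V_k$: writing out the two plethystic expressions for $-qt\,d_+^*d_-F$ and $-qt\,d_-d_+^*F$ and subtracting, the troublesome $y_1^{-1}$ cancels and one obtains
\[
[d_+^*,d_-]F=(1-q^{-1})\,F\bigl[X+(1-q)ty_1-(q-1)u,\,y_2,\ldots,y_k,\,u\bigr]\,\pExp[u^{-1}ty_1-u^{-1}X]\big|_{u^0},
\]
which visibly lies in $V_k$. Since each $z_i$ is a Hecke-twist of $\varphi^*=(q^{-1}-1)^{-1}[d_+^*,d_-]$ via Lemma~\ref{lem:Iwahori}, this one computation settles all $z_i$ simultaneously.

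Your approach---use the identity $z_1 d_+=q^k d_{+,CM}^*$ and commute $z_i$ through words in $d_+,d_-,T_j$---is also correct, and has the virtue of avoiding any plethystic manipulation: everything reduces to the $\BB_{q,t}$ commutation relations, which hold on $W_\bullet$ by part (1). The cost is exactly the bookkeeping you flag: passing $z_1$ through $T_1$ produces $z_2$, so the induction must run simultaneously over all $z_i$, and one must check each commutation with $d_-$, $d_+$, and $T_j$ (for $j\in\{i-1,i\}$ and otherwise) strictly decreases word length. This closes because Lemma~\ref{th:Iwahori} lets you take $d_+,d_-,T_j$ as the generating set for $\BA_q$, and the process terminates either at level $0$ (vacuously, not via ``$z_i\Id_0=0$'' as you write---there is no $z_i$ there) or at an occurrence $z_1 d_+\mapsto q^{k'}d_{+,CM}^*$. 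The paper's computation is shorter and self-contained; yours is more structural and closer in spirit to how one would argue if the explicit plethystic formula were unavailable.
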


We illustrate all these representations in the following commutative diagram:
\begin{center}
\begin{tikzcd}
\BA_{q,t}\arrow[r,"\beta"]  \arrow[d,"d_{+,CM}^*"] & \BB_{q,t}\arrow[r,"\alpha"] \arrow[d] & \BA_{q,t} \arrow[d,"d_+^*"]\\
\End(V_\bullet) & \arrow[l] \End_{V_\bullet}(W_\bullet) \arrow[r,hook] & \End(W_\bullet)\\
\end{tikzcd}
\end{center}
Here $\End_{V_\bullet}(W_\bullet)$ denotes the set of endomorphisms of $W_\bullet$ preserving $V_\bullet$. 

\begin{proof}
Let us prove that $T_i$, $d_{+}$, $d_{-}$ and $d_{+}^{*}=-(qt y_1)^{-1}d_{+,CM}^{*}$ are well-defined on $W_\bullet$. 
If $F\in V_\bullet$, then 
\[
T_i(F/(y_1\cdots y_k))=(T_iF)/(y_1\cdots y_k)\in W_\bullet,
\]
\[
d_+(F/(y_1\cdots y_k))=(T_1 T_2 \cdots T_k y_{k+1}F[X+(q-1) y_{k+1}])/(y_1\cdots y_{k+1})\in W_\bullet,
\]
\[
d_-(F/(y_1\cdots y_k))=-(y_1\cdots y_{k-1})^{-1}\Res_{y_k} F[X-(q-1) y_k]y_k^{-1}\pExp[-y_k^{-1} X] d y_k\in W_\bullet,
\]
\[
-qt d_{+}^{*}(F/(y_1\cdots y_k))=-y_1^{-1}\gamma \left(F[X+(q-1)y_{k+1}]/(y_1\cdots y_k)\right)=
\]
\[
(y_1\cdots y_{k+1})^{-1}\gamma (F[X+(q-1)y_{k+1}]\in W_\bullet. 
\]
The verification of commutation is identical to \cite{carlsson2015proof} and we leave it to the reader.

To prove that $\alpha(\BB_{q,t})$ preserves $V_\bullet$, it is sufficient to prove that the commutator $[d_{+}^{*},d_{-}]$ preserves $V_\bullet$ (then $z_i$ preserve $V_\bullet$,
and $T_i,d_+,d_-$ preserve $V_\bullet$ by definition). For $F\in V_k$ we have:
\[
-qt d_{+}^{*}d_{-}F=-y_1^{-1}F[X+(1-q)ty_1-(q-1)u,y_2,\ldots,y_k,u]
\]
\[
\times \pExp[-u^{-1}X-u^{-1}(q-1)ty_1]|_{u^{-1}},
\]
\[
-qt d_{-}d_{+}^*=-y_1^{-1}F[X+(1-q)ty_1-(q-1)u,y_2,\ldots,y_k,u]\pExp[-u^{-1}X]|_{u^{-1}}.
\]
Now 
\[
\frac{1-u^{-1}qty_1}{1-u^{-1}ty_1}-1=(1-q)\frac{u^{-1}ty_1}{1-u^{-1}ty_1},
\]
so
\[
[d_{+}^*,d_{-}]F=(1-q^{-1})F[X+(1-q)ty_1-(q-1)u,y_2,\ldots,y_k,u]\pExp[u^{-1}ty_1-u^{-1}X]|_{u^0}.
\]
Finally, $\alpha\beta(d_{+}^{*})=-qty_1d_{+}^{*}=d_{+,CM}^{*}$.
\end{proof}

This result is very useful in the proof of our main theorem. Namely, we will define a geometric representation of $\BB_{q,t}$
and identify it with the space $V_\bullet$. Then, using the homomorphism $\beta$, we will define a representation of $\BA_{q,t}$ which, by the above, is isomorphic to the representation from Proposition \ref{halfalgprop}.

Finally, a key observation from \cite{carlsson2015proof} is that there is a symmetry
in the relations of $\mathbb{A}_{q,t}$ which
is antilinear with respect to the conjugation $(q,t)\mapsto (q^{-1},t^{-1})$,
and is given on generators by
\begin{equation}
  \label{invsymmetry}
  d_-\leftrightarrow d_-,\quad T_i \leftrightarrow T_i^{-1},
  \quad y_i \leftrightarrow z_i,\quad
  d_+\leftrightarrow d_+^*
  \end{equation}
Furthermore, this symmetry preserves the kernel of the map
$\mathbb{A}_{q,t}\rightarrow \End(V_\bullet)$,
and so determines a map
\begin{equation}
  \label{NVdef}
  \mathcal{N} : V_\bullet \rightarrow V_\bullet
  \end{equation}
which is antilinear, and satisfies
$\mathcal{N}^2=1$.

\section{The spaces} 

 \subsection{Parabolic flag Hilbert schemes}
 
\begin{definition}
The parabolic flag Hilbert scheme $\PFH_{n,n-k}$ of points on $\BC^2$ is the moduli space of flags
\[
I_n\subset I_{n-1}\subset \ldots \subset I_{n-k} 
\]
where $I_{n-i}$ is the ideal in $\BC[x,y]$ of codimension $(n-i)$ and $yI_{n-k}\subset I_{n}$.
\end{definition}

\begin{definition}
\label{def: ADHM}
The parabolic flag Hilbert scheme $\PFH_{n,n-k}$ of points on $\BC^2$ is the space of triples $(X,Y,v)/G$
where $v\in BC^n$, $X$ and $Y$ are $(n-k,k)$ block lower-triangular  matrices such that
$k\times k$ block is lower-triangular in $X$ and vanishes in $Y$: 
\begin{equation}
\label{X and Y}
X= \left(
\begin{tabular}{c|ccr}
\\
  * &  & 0 & \\
  \vdots  & \vdots
  & \qquad \\
 \hline
  \qquad \qquad \qquad & * & 0  & \ldots  0 \\
   \qquad \qquad \qquad & *  & * &  \ldots  0 \\
      \vdots & \vdots & $\vdots$ & \vdots\, \\
   \qquad \qquad \qquad & * & $\cdots$ & * \\
   
\end{tabular}
\right),\;
Y= \left(
\begin{tabular}{c|ccc}
\\
  * &    0   \\
 \qquad \qquad \qquad & \qquad \\
 \hline
   \qquad \qquad \qquad &   0 \ldots  0 \\
      \vdots & \vdots   \\
   \qquad \qquad \qquad & 0 \ldots  0 \\
   
\end{tabular}
\right).
\end{equation}
We require that $[X,Y]=0$
and the stability condition $\BC\langle X,Y \rangle v= \BC^{n}$ holds. The group $G$ consists of $(n-k,k)$ invertible block lower-triangular matrices with lower-triangular $k\times k$ block, and acts
by $g.(X,Y,v)=(gXg^{-1},gYg^{-1},gv).$
\end{definition}

\begin{proposition}
Two definitions of $\PFH_{n,n-k}$ are equivalent.
\end{proposition}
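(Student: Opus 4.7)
The plan is to adapt the classical ADHM description of $\Hilb^n(\BC^2)$ to the parabolic flag setting, with the correspondence implemented by choosing a basis of the module $M = \BC[x,y]/I_n$ adapted to the flag. The intermediate object in both directions is the $n$-dimensional $\BC[x,y]$-module $M$: on the ideal side the flag data is intrinsic to $M$, while on the ADHM side it is encoded by the block-triangular form of $X$ and $Y$ once a basis of $M$ has been chosen.

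Starting from an ideal flag $I_n \subset \cdots \subset I_{n-k}$, I would set $F_j := I_{n-j}/I_n$, obtaining a filtration $0 = F_0 \subsetneq F_1 \subsetneq \cdots \subsetneq F_k$ of submodules of $M$ with $\dim F_j = j$. Each $F_j$ is $x$-stable because $I_{n-j}$ is an ideal, and $y\cdot F_k = 0$ is exactly the hypothesis $yI_{n-k}\subset I_n$. Choosing a basis $e_1,\ldots,e_n$ of $M$ with $F_j = \mathrm{span}(e_{n-j+1},\ldots,e_n)$, taking $X$ and $Y$ to be the matrices of multiplication by $x$ and $y$, and $v$ to be the image of $1\in \BC[x,y]$, the block structure required in Definition \ref{def: ADHM} reads off from the flag: the upper-right $(n-k)\times k$ blocks vanish because $F_k$ is stable under both $x$ and $y$, the $k\times k$ block of $Y$ is zero because $Y|_{F_k}=0$, and the $k\times k$ block of $X$ is lower triangular because $X$ preserves each $F_j$. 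Commutativity $[X,Y]=0$ and the stability $\BC\langle X,Y\rangle v = \BC^n$ are automatic.

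For the reverse direction, given $(X,Y,v)$ I would endow $\BC^n$ with a $\BC[x,y]$-module structure via $X,Y$, form the evaluation map $\phi:\BC[x,y]\to \BC^n$, $f\mapsto f(X,Y)v$, and put $I_n := \ker\phi$, an ideal of codimension $n$ by stability. Writing $U_j := \mathrm{span}(e_{n-j+1},\ldots,e_n)$ for $j=0,\ldots,k$, the block conditions on $X$ and $Y$ force each $U_j$ to be a $\BC[x,y]$-submodule, so $I_{n-j}:=\phi^{-1}(U_j)$ is an ideal containing $I_n$ of codimension $n-j$, and $yI_{n-k}\subset I_n$ follows from $Y U_k = 0$.

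The main point to check carefully, and the main obstacle of the argument, is that these two constructions are mutually inverse on the level of $G$-orbits. The key observation is that $G$ is exactly the stabilizer of the pair consisting of the top subspace $U_k$ and its internal flag $U_0\subset U_1\subset\cdots\subset U_k$: the block lower-triangular shape of $G$ preserves $U_k$, while the lower-triangular $k\times k$ block is precisely what preserves the internal flag inside $U_k$. Hence any two bases of $M$ adapted to the same ideal flag differ by a unique element of $G$, so both constructions descend to well-defined maps on $G$-orbits, and unwinding the definitions then shows they are mutually inverse.
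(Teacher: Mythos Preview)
Your argument is correct and follows essentially the same route as the paper's proof: both translate the flag of ideals into linear-algebraic data on $M=\BC[x,y]/I_n$ via the standard ADHM dictionary, and both identify the block-triangular shape of $(X,Y)$ with the flag condition and $yI_{n-k}\subset I_n$. The only cosmetic difference is that the paper works with the quotients $W_s=\BC[x,y]/I_s$ (spanned by the first $s$ coordinate vectors) whereas you work with the dual picture of submodules $F_j=I_{n-j}/I_n$ (spanned by the last $j$ coordinate vectors); these are equivalent, and your version has the mild advantage of making the identification of $G$ with the stabilizer of the flag more explicit.
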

\begin{proof}
The proof is standard but we include it here for completeness.
Given a flag of ideals $\{I_n\subset I_{n-1}\subset \ldots \subset I_{n-k}\subset \BC[x,y]\}$, consider the sequence of vector spaces 
$W_s=\BC[x,y]/I_s$.  The multiplication by $x$ and $y$ induces an action of two commuting operators $X$ and $Y$ on each $W_s$. 
There is a sequence of surjective maps $W_{n}\twoheadrightarrow W_{n-1}\twoheadrightarrow \ldots \twoheadrightarrow W_{n-k}$ which commute with the action of $X$ and $Y$. Since $yI_{n-k}\subset I_{n}$, the operator $Y$ annihilates 
\[
\Ker(W_{n}\twoheadrightarrow W_{n-k})=I_{n-k}/I_{n}.
\]
If one chooses a basis in all $W_s$ compatible with the projections, then the operators $X$ and $Y$ in this basis would have the form \eqref{X and Y}. The vector $v$ corresponds to the projection of $1\in \BC[x,y]$, and the matrix $g$ corresponds to the change of basis. 

Conversely, given a triple $X,Y,v$, let $W_s$ be the vector space spanned by the first $s$ coordinate vectors, and let $X_s,Y_s,v_s$ denote the restrictions of $X, Y$ and $v$ to $W_s$. Let $I_s=\{f\in \BC[x,y]\ :\ f(X_s,Y_s)(v_s)=0\}.$ Clearly, $I_s$ is an ideal, $I_{s+1}\subset I_s$
and $yI_{n-k}\subset I_n$. 
\end{proof}

\begin{example}
If $k=0$ then clearly $\PFH_{n,n-k}=\Hilb^n(\BC^2)$. If $k=n$ then $\PFH_{n,n-k}=\BC^n$. Indeed,  for $k=n$ the matrix $Y$ vanishes, and the stability condition implies that $X$ is determined up to conjugation by its eigenvalues (that is, all generalized eigenvectors with the same eigenvalue belong to a single Jordan block). Therefore the natural projection 
\[
\PFH_{n,0}\to \BC^n,\ (X,Y,v)\mapsto (x_{11},\ldots,x_{nn})
\]
is an isomorphism.
\end{example}

These examples indicate that $\PFH_{n,n-k}$ behaves better than the full flag Hilbert scheme which is very singular \cite{GNR}.
This is indeed true in general.

\begin{theorem}
\label{th:smooth}
The space $\PFH_{n,n-k}$ is a smooth manifold of dimension $2n-k$ for all $n$ and $k$.
\end{theorem}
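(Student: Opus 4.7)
My plan is to prove smoothness via the ADHM description of Definition~\ref{def: ADHM}, analyzing the Zariski tangent space at every point through a standard two-term complex. Let $\mathcal{C} \subset E_X \oplus E_Y \oplus \BC^n$ denote the affine variety of triples $(X, Y, v)$ satisfying $[X,Y]=0$, the block-structural constraints, and the stability condition, where $E_X$ and $E_Y$ are the linear subspaces of $\End(\BC^n)$ carved out by the block conditions in Definition~\ref{def: ADHM}. Then $\PFH_{n,n-k} = \mathcal{C}/G$, and stability implies that $G$ acts freely on $\mathcal{C}$, so it suffices to show $\mathcal{C}$ is smooth of dimension $\dim \mathfrak{g} + 2n - k$.

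At a point $(X, Y, v) \in \mathcal{C}$, the Zariski tangent space of $\mathcal{C}$ is the kernel of the linearized commutator map
$$d_1 : E_X \oplus E_Y \oplus \BC^n \longrightarrow E_Z, \quad (\delta X, \delta Y, \delta v)\mapsto [\delta X, Y] + [X, \delta Y],$$
where $E_Z \supset [E_X, E_Y]$ is the corresponding space of block matrices; I would first verify by an explicit block-matrix computation that $d_1$ indeed lands in $E_Z$. The infinitesimal gauge action $d_0 : \mathfrak g \to E_X \oplus E_Y \oplus \BC^n$, $g\mapsto([g,X],[g,Y],gv)$, is injective by a routine use of the stability condition. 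A direct linear algebra count of the dimensions $\dim E_X$, $\dim E_Y$, $\dim E_Z$, and $\dim \mathfrak g$ (in terms of $m = n-k$) then shows that $\dim \ker(d_1) = \dim \mathfrak{g} + 2n - k$ if and only if $\dim \mathrm{coker}(d_1) = (k+1)(n-k)$.

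The main step is therefore to control $\mathrm{coker}(d_1)$. The approach I would take is to use the trace pairing on $\End(\BC^n)$ to identify $\mathrm{coker}(d_1)$ with the kernel of a dual map $d_1^\vee$: concretely, elements of $\mathrm{coker}(d_1)$ correspond to pairs of matrices with the ``transposed'' block structure that (co)commute with $X$ and $Y$ in a precise sense. A suitable dual version of the stability condition---obtained by transposing the data and exploiting the block structure---forces such pairs to live in a space of dimension exactly $(k+1)(n-k)$. When $k = 0$ this reduces to the classical Nakajima computation for $\Hilb^n(\BC^2)$, where $\mathrm{coker}(d_1)$ has dimension $n$ and smoothness of dimension $2n$ is recovered.

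The main obstacle is this cokernel computation, since it requires careful bookkeeping of how the specific block-lower-triangular constraints on $X$, $Y$, $\mathfrak g$, and $E_Z$ interact; unlike the unrestricted Hilbert scheme case, the various spaces are no longer interchangeable, so one cannot simply quote the standard calculation. In particular, the dualized stability condition that governs $\mathrm{coker}(d_1)$ must be extracted from the original stability condition together with the parabolic constraint $y I_{n-k} \subset I_n$ (which is what produces the ``extra'' $k$ dimensions in the cokernel beyond the $n$ already present in the Hilbert scheme case). Once this count is verified, the Euler characteristic of the two-term complex yields $\dim \ker(d_1) - \dim \mathfrak g = 2n - k$, giving smoothness and the dimension formula for $\PFH_{n,n-k}$ simultaneously.
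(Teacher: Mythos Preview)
Your approach via an ADHM deformation complex is genuinely different from the paper's, which instead uses a Biswas--Okounkov construction: one defines $\sigma(x,y)=(x,y^{k+1})$ and shows (Lemmas~\ref{lem:space is an ideal} and~\ref{lem:an ideal is invariant}) that the assignment $(I_n,\ldots,I_{n-k})\mapsto \sum_{j=0}^k y^j\sigma^*I_{n-j}$ identifies $\PFH_{n,n-k}$ with a union of connected components of the fixed locus of the cyclic group $\Gamma=\BZ/(k+1)\BZ$ acting on an ordinary Hilbert scheme of points. Since fixed loci of finite groups on smooth varieties are smooth, smoothness of $\PFH_{n,n-k}$ follows at once from that of $\Hilb^N(\BC^2)$, and no deformation-complex bookkeeping is needed.

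Your sketch has two genuine gaps. First, the claimed cokernel dimension $(k+1)(n-k)$ is inconsistent with the Euler characteristic of your own complex. With the natural choice of $E_Z$ (matrices whose last $k$ columns vanish, the linear span of all $[X,Y]$ for $X\in E_X$, $Y\in E_Y$), one has $\dim E_X=\dim\mathfrak g=(n-k)n+\tfrac{k(k+1)}{2}$ and $\dim E_Y=\dim E_Z=(n-k)n$, so the alternating sum $-\dim\mathfrak g+(\dim E_X+\dim E_Y+n)-\dim E_Z$ equals $n$; hence the cokernel must have dimension $n-k$, not $(k+1)(n-k)$ (the two agree only when $k=0$ or $k=n$). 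Second, and more fundamentally, even a correct and constant value of $\dim\ker d_1$ only bounds the Zariski tangent space, which gives an \emph{upper} bound on $\dim\mathcal C$; it does not by itself prove smoothness. Since your $d_1$ is not surjective (the cokernel is positive-dimensional for $k<n$), $\mathcal C$ is not cut out as a complete intersection by $[X,Y]=0$, and you still need an independent lower bound $\dim\mathcal C\ge\dim\mathfrak g+2n-k$. For $k=0$ this already requires the nontrivial fact that the commuting variety is irreducible of dimension $n^2+n$; for general $k$ the analogous statement about the ``parabolic commuting variety'' is not addressed. The paper's fixed-locus argument sidesteps both issues entirely.
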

 
In the proof of this theorem we will use a version of the geometric construction of  Biswas and Okounkov \cite{biswas1997parabolic}(see also \cite[Section 3.4]{feigin2011yangians}, \cite[Section 4.3]{NKuzn} and references therein). Consider the map
\[
\sigma:\BC^2\to \BC^2,\ \sigma(x,y)=(x,y^{k+1}).
\]
Also, consider an action of the group $\Gamma=\BZ/(k+1)\BZ$ on $\BC^2$ given by $(x,y)\mapsto (x,\zeta y)$,
where $\zeta$ is a primitive $(k+1)$st root of unity. 
Given a sequence of ideals $I_n,\ldots,I_{n-k}$, we can consider the space
\[
J(I_{n},\ldots,I_{n-k})=\sigma^{*}I_{n}+y\sigma^{*}I_{n-1}+\ldots+y^{k}\sigma^{*}I_{n-k}\subset \BC[x,y]
\]  
\begin{lemma}\label{lem:space is an ideal}
The space $J(I_{n},\ldots,I_{n-k})$ is an ideal in $\BC[x,y]$ if and only if $yI_{n-k}\subset I_{n}\subset I_{n-1}\subset\cdots\subset I_{n-k}$.
\end{lemma}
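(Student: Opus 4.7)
The plan is to exploit the $\mathbb{Z}/(k+1)$-grading of $\BC[x,y]$ coming from the $\Gamma$-action on $\BC^2$, namely the direct sum decomposition
\[
\BC[x,y] = \bigoplus_{j=0}^{k} y^j \cdot \BC[x,y^{k+1}] = \bigoplus_{j=0}^{k} y^j \cdot \sigma^*\BC[x,y].
\]
Relative to this grading, $J(I_n,\ldots,I_{n-k})$ is visibly a graded subspace whose $j$-th homogeneous piece is precisely $y^j \sigma^* I_{n-j}$. Since $\sigma^*$ is injective on polynomials (pullback by a finite morphism), each graded component faithfully records the corresponding ideal $I_{n-j}$. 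This graded structure is the key technical input: it lets us separate conditions that come from different shift indices.

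To test whether $J$ is an ideal it suffices to check closure under multiplication by $x$ and by $y$. Multiplication by $x$ preserves the grading, and on the $j$-th piece sends $y^j \sigma^*(f)$ to $y^j \sigma^*(xf)$; since each $I_{n-j}$ is already an ideal, $xf \in I_{n-j}$ and the result stays in $y^j \sigma^* I_{n-j}$. Thus closure under $x$ is automatic and carries no constraint.

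Closure under $y$ is what produces the equivalence. For $0 \le j < k$, the map $f \mapsto y \cdot y^j \sigma^*(f) = y^{j+1} \sigma^*(f)$ lands in the $(j+1)$-th graded piece, so $y J \subset J$ in this range is equivalent to $\sigma^* I_{n-j} \subset \sigma^* I_{n-(j+1)}$, i.e.\ $I_{n-j} \subset I_{n-j-1}$ by injectivity of $\sigma^*$. For $j = k$ we use the crucial relation $y^{k+1} = \sigma^*(y)$: then $y \cdot y^k \sigma^*(f) = \sigma^*(yf)$ belongs to the $j=0$ component, and lying in $\sigma^* I_n$ is equivalent (again by injectivity) to $yf \in I_n$, i.e.\ $yI_{n-k} \subset I_n$. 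Running these checks on arbitrary generators of each $\sigma^* I_{n-j}$ gives the forward direction, while the converse is the observation that the same computations show $yJ \subset J$ whenever the chain $yI_{n-k}\subset I_n\subset\cdots\subset I_{n-k}$ holds.

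The only potential obstacle is bookkeeping: keeping track of which grading shift corresponds to which inclusion of ideals, and remembering that the $j=k \to j=0$ wrap-around is exactly the place where $y^{k+1} = \sigma^*(y)$ converts a condition on $y$-multiplication into the condition $yI_{n-k} \subset I_n$. Once the $\Gamma$-graded decomposition is set up, no further technical input beyond injectivity of $\sigma^*$ is needed.
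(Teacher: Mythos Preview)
Your proof is correct and follows essentially the same approach as the paper: both reduce the ideal condition to closure under multiplication by $y$, then analyze it graded piece by piece using the decomposition $\BC[x,y]=\bigoplus_j y^j\sigma^*\BC[x,y]$, with the wrap-around $y\cdot y^k\sigma^*I_{n-k}=\sigma^*(yI_{n-k})$ yielding the condition $yI_{n-k}\subset I_n$. Your write-up is slightly more explicit about the role of the grading and the injectivity of $\sigma^*$, but the argument is the same.
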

\begin{proof}
Clearly, multiplication by $x$ preserves the space $J(I_{n},\ldots,I_{n-k})$, so it is an ideal if and only if it is preserved by the multiplication by $y$. For $0\le j< k$ one has
\[
y\cdot y^{j}\sigma^{*}I_{n-j}=y^{j+1}\sigma^{*}I_{n-j}
\]
which is contained in $y^{j+1}\sigma^{*}I_{n-j-1}$ if and only if $I_{n-j}\subset I_{n-j-1}$. Furthermore, 
\[
y\cdot y^{k}\sigma^*I_{n-k}=y^{k+1}\sigma^*I_{n-k}=\sigma^*(yI_{n-k}),
\] 
which is contained in $\sigma^{*}I_{n}$ if and only if $yI_{n-k}$ is contained in $I_{n}$.
\end{proof}
\begin{lemma}\label{lem:an ideal is invariant}
An ideal $J\subset \BC[x,y]$ is invariant under the action of $\Gamma$ if and only if $J=J(I_{n},\ldots,I_{n-k})$
for some ideals $I_n\subset\cdots\subset I_{n-k}$ with $y I_{n-k}\subset I_n$. In this case the ideals $I_{n-j}$ are uniquely determined by $J$.
\end{lemma}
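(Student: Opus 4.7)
The plan is to exploit the weight decomposition of $\BC[x,y]$ under the $\Gamma$-action. Since $\Gamma=\BZ/(k+1)\BZ$ acts by $y\mapsto \zeta y$ with $\zeta$ a primitive $(k+1)$st root of unity, and $x$ is fixed, the character decomposition is
\[
\BC[x,y]=\bigoplus_{j=0}^{k} y^j\,\BC[x,y^{k+1}] = \bigoplus_{j=0}^{k} y^j\sigma^*\BC[x,y],
\]
with $y^j\sigma^*\BC[x,y]$ being the subspace of weight $j\bmod (k+1)$. Note that $\sigma^{*}$ is injective since it is just the substitution $y\mapsto y^{k+1}$.

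For the ``if'' direction, given $I_n\subset\cdots\subset I_{n-k}$ with $yI_{n-k}\subset I_n$, Lemma \ref{lem:space is an ideal} shows that $J:=J(I_n,\ldots,I_{n-k})$ is an ideal. Each summand $y^j\sigma^*I_{n-j}$ sits inside the weight-$j$ subspace of $\BC[x,y]$, so these summands are linearly independent and
\[
J = \bigoplus_{j=0}^{k} y^j\sigma^* I_{n-j}
\]
is a direct sum of $\Gamma$-eigenspaces, hence $\Gamma$-invariant.

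For the ``only if'' direction, let $J$ be a $\Gamma$-invariant ideal and decompose $J=\bigoplus_{j=0}^k J_j$ into its isotypic components. Since $J_j\subset y^j\sigma^*\BC[x,y]$ and $\sigma^*$ is injective, there is a unique subspace $I_{n-j}\subset\BC[x,y]$ with $J_j=y^j\sigma^* I_{n-j}$, namely $I_{n-j}=\{f\in \BC[x,y] : y^j\sigma^*(f)\in J\}$. This already gives uniqueness. To check $I_{n-j}$ is an ideal, for $f\in I_{n-j}$ and $g\in\BC[x,y]$ write
\[
y^j\sigma^*(gf)=\sigma^*(g)\cdot\bigl(y^j\sigma^* f\bigr)\in J,
\]
since $J$ is an ideal. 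The flag containments follow similarly: multiplying $y^j\sigma^* f\in J$ by $y$ yields $y^{j+1}\sigma^* f\in J$, proving $I_{n-j}\subset I_{n-j-1}$, while for $j=k$ we get $y\cdot y^k\sigma^* f = y^{k+1}\sigma^* f = \sigma^*(yf)\in J$, proving $yI_{n-k}\subset I_n$.

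There is no serious obstacle; the content of the lemma is the observation that the $(k+1)$-step flag structure with $yI_{n-k}\subset I_n$ is precisely the data of a $\Gamma$-invariant ideal, dictionary between the two being the isotypic decomposition. The only mild subtlety is making sure the pieces $y^j\sigma^* I_{n-j}$ are pairwise disjoint in $\BC[x,y]$ (so that the sum really recovers the ideal), which is immediate from the weight decomposition.
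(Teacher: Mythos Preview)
Your proof is correct and follows essentially the same approach as the paper: both arguments pivot on the isotypic decomposition $J=\bigoplus_{j} J_j$ of a $\Gamma$-invariant ideal and identify $J_j$ with $y^j\sigma^* I_{n-j}$. The only cosmetic difference is that the paper recovers the flag containments $I_{n-j}\subset I_{n-j-1}$ and $yI_{n-k}\subset I_n$ by invoking Lemma~\ref{lem:space is an ideal} (using that $J$ is an ideal forces these), whereas you verify them directly by multiplying by $y$; the content is identical.
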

\begin{proof}
Clearly,  $\sigma^*\BC[x,y]=\BC[x,y^{k+1}]\subset \BC[x,y]$ is invariant under the action of $\Gamma$, so
$J(I_{n},\ldots,I_{n-k})$ is also invariant. Conversely, let $J$ be a $\Gamma$-invariant ideal in $\BC[x,y]$, we can decompose it according to the action of $\Gamma$:
\[
J=\oplus_{s=0}^{k}J^{(s)},\ \zeta(f)=\zeta^{s}f\ \text{for}\ f\in J^{(s)}.
\]
Since $y^{k+1}J^{(s)}\subset J^{(s)}$, we can write $J^{(s)}=y^{s}\sigma^*(I_{n-s})$ for some ideal $I_{n-s}$.
By Lemma \ref{lem:space is an ideal}, $I_{n-s}\subset I_{n-s-1}$ and $yI_{n-k}\subset I_{n}$.
\end{proof}

\begin{proof}[Proof of Theorem \ref{th:smooth}]
By Lemma \ref{lem:an ideal is invariant}, the space $\PFH_{n,n-k}$ can be identified with a subset of the fixed point set of the action of a finite group $\Gamma$ on the Hilbert scheme $\Hilb^n(\BC^2)$. The codimensions of $I_{n-s}$ are locally constant functions on the fixed point set. Therefore $\PFH_{n,n-k}$ can be identified with a union of several connected components of the fixed point set. Since $\Hilb^n(\BC^2)$ is smooth, the fixed point set is also smooth.
\end{proof}

 \subsection{Torus action}

The group $\BT=\BC^*\times \BC^*$ acts on $\BC^2$ by scaling the coordinates: $(x,y)\to (q^{-1}x,t^{-1}y)$. This action can be lifted to the action on the Hilbert schemes $\Hilb^n$ and the spaces $\PFH_{n,n-k}$. The fixed points of this action on $\Hilb^n$ correspond to monomial ideals $I_{\lambda}$ and are labeled by Young diagrams $\lambda$ with $|\lambda|=n$.  It is convenient to encode a single cell $\square$ by its monomial $\chi(\square) = q^c t^r$, where $c$ resp. $r$ is the column resp. row index of $\square$. It is well known (e.g. Lemma 5.4.5 in \cite{haiman2002combinatorics}, see also \cite{nakajima1999lectures}) that the equivariant character of the cotangent space at $I_{\lambda}$ is given by 
\begin{equation}
\label{eq: arms legs identity}
\ch \Omega_{I_{\lambda}}\Hilb^n=\sum_{\sq\in \lambda}(q^{a(\sq)+1}t^{-l(\sq)}+q^{-a(\sq)}t^{l(\sq)+1})= qt B_\mu + B_\mu^*-(q-1)(t-1)B_\mu B_\mu^*,
\end{equation}
where $a(\sq)$ and $l(\sq)$ denote the lengths of the arm and the leg of $\sq$ in $\lambda$, $B_\mu = \sum_{\square\in\mu} \chi(\square)$ and $*$ in $B_\mu^*$ denotes the substitution $q\to q^{-1}$, $t\to t^{-1}$.

The fixed points of $\PFH_{n,n-k}$ are labeled by sequences of monomial ideals $I_{n}\subset \ldots \subset I_{n-k}$ corresponding to Young diagrams $\lambda^{(n)}\supset \ldots\supset \lambda^{(n-k)}$. The condition $yI_{n-k}\subset I_{n}$ can be translated to $\lambda^{(i)}$
as follows: $\lambda^{(n)}\setminus \lambda^{(n-k)}$ is a (possibly disconnected) {\bf horizontal strip}, that is, it contains at most one box in each column. Another useful reformulation of this condition is 
\begin{equation}
\label{cyclic}
\lambda^{(n-k)}_i\ge \lambda^{(n)}_{i+1},\ \text{where}\ \lambda^{(n-j)}=(\lambda^{(n-j)}_1 \ge \lambda^{(n-j)}_2\ge \ldots). 
\end{equation}
Note that the difference $\lambda^{(n-j)}\setminus \lambda^{(n-j-1)}$ consists of a single box. Instead of keeping track of the sequence of partitions we prefer to remember only the first one, which we denote by $\lambda=\lambda^{(n)}$, and the successive differences $\square_j = \lambda^{(n-j+1)}\setminus \lambda^{(n-j)}$ ($j=1,\ldots,k$). When drawing a picture we will display $\lambda$ as a Young diagram, together with labeling of some of its cells by numbers from $1$ to $k$ where we put $j$ in $\square_j$. 
Alternatively, we will form a vector $w=(w_1,\ldots,w_k)$ where $w_j=\chi(\square_j)$.
A fixed point in $\PFH_{n,n-k}$ will be denoted by $I_{\lambda,w}$ when we specify a pair of a partition $\lambda$ and a vector $w$, or by $I_{\lambda^{(\bullet)}}$ when we specify a decreasing sequence of partitions $\lambda^{(\bullet)}$.

Another way of encoding sequences of partitions $\lambda^{(n-j)}$ comes from the proof of Theorem \ref{th:smooth}. If all $I_{n-j}$ are monomial ideals, so is $J(I_{n},\ldots,I_{n-k})$. The corresponding Young diagram $\mu$ has rows:
\[
\mu=(\lambda^{(n)}_1,\ldots,\lambda^{(n-k)}_1,\lambda^{(n)}_2,\ldots,\lambda^{(n-k)}_2,\lambda^{(n)}_3,\ldots),
\]
which decrease by \eqref{cyclic}.
Note that
\[
B_\mu = B_{\lambda^{(n)}}(q, t^{k+1}) + t B_{\lambda^{(n-1)}}(q, t^{k+1}) +\cdots+t^k B_{\lambda^{(n-k)}}(q, t^{k+1}).
\]
To calculate the character of $\Omega_{\lambda_\bullet} \PFH_{n,n+k}$ we need to extract the terms in $\ch \Omega_{I_{\mu}}\Hilb$ whose $t$-degree is divisible by $k+1$, and then replace each term $q^{a} t^{b(k+1)}$ by $q^a t^b$. Performing this with \eqref{eq: arms legs identity} we obtain:
\[
qt B_{\lambda^{(n-k)}} + B_{\lambda^{(n)}}^* + (q-1) \left(\sum_{i=0}^{k} B_{\lambda^{(n-i)}} B_{\lambda^{(n-i)}}^* - t B_{\lambda^{(n-k)}} B_{\lambda^{(n)}}^* - \sum_{i=1}^{k} B_{\lambda^{(n-i+1)}} B_{\lambda^{(n-i)}}^*\right),
\]
which can be rewritten as
\[
qt B_{\lambda^{(n-k)}} + B_{\lambda^{(n)}}^* + (q-1)\left((B_{\lambda^{(n)}}-t B_{\lambda^{(n-k)}})B_{\lambda^{(n)}}^* - \sum_{i=1}^{k} w_{i} B_{\lambda^{(n-i)}}^*\right),
\]
so we obtain
\begin{equation}\label{eq:tangent char}
\ch \Omega_{\lambda^{(\bullet)}} \PFH_{n,n-k} = qt B_{\lambda^{(n-k)}} + B_{\lambda^{(n)}}^* - (t-1)(q-1) B_{\lambda^{(n-k)}} B_{\lambda^{(n)}}^* + (q-1) \sum_{k\geq i\geq j \geq 1} w_i w_j^{-1}.
\end{equation}
%

By using \eqref{eq:tangent char} and \eqref{eq: arms legs identity}, one can check the following:

\begin{proposition}
Let $a(\sq,j)$  denote the arm of $\sq$ in $\lambda_{n-k+j}$. Let $l(\sq)$ denote the leg of $\sq$ in $\lambda_n$.
equals
\[
\ch T_{\lambda_{\bullet}}(\PFH_{n,n-k})=kq+\sum_{\sq\in \lambda_{n-k}}\theta(\sq)
\]
where
\[
\theta(\sq)= q^{a(\sq,0)+1}t^{-l(\sq)}+q^{-a(\sq,k)}t^{l(\sq)+1}
\]
if there are no boxes in $\lambda_{n}\setminus \lambda_{n-k}$ above $\sq$, and
\[
\theta(\sq)=q^{a(\sq,i)+1}t^{-l(\sq)-1}+q^{-a(\sq,i-1)}t^{l(\sq)+1}
\]
if there is a box labeled by $i$ above $\sq$.
\end{proposition}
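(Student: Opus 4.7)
The plan is to derive the proposition directly from the closed-form character \eqref{eq:tangent char} by a cell-by-cell reorganization. The target formula $\theta(\sq)$ has the same shape as the summand in the standard Hilbert-scheme identity \eqref{eq: arms legs identity}, so the task is essentially to show that \eqref{eq:tangent char} telescopes into a sum over $\sq \in \lambda^{(n-k)}$ with the prescribed modifications (arm/leg shifts and case split) coming from the horizontal strip $\lambda^{(n)}\setminus\lambda^{(n-k)}$.

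First, I would split $B_{\lambda^{(n)}} = B_{\lambda^{(n-k)}} + W$ with $W = \sum_{j=1}^k w_j$, and apply \eqref{eq: arms legs identity} to $\lambda^{(n-k)}$. This extracts
\[
qt B_{\lambda^{(n-k)}} + B_{\lambda^{(n-k)}}^* - (q-1)(t-1) B_{\lambda^{(n-k)}} B_{\lambda^{(n-k)}}^* = \sum_{\sq\in \lambda^{(n-k)}}\bigl(q^{a'(\sq)+1}t^{-l'(\sq)} + q^{-a'(\sq)}t^{l'(\sq)+1}\bigr),
\]
where $a'$, $l'$ denote arm/leg in $\lambda^{(n-k)}$. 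The remainder of \eqref{eq:tangent char} is
\[
R := W^*\bigl(1 - (q-1)(t-1) B_{\lambda^{(n-k)}}\bigr) + (q-1)\sum_{k\geq i\geq j\geq 1}w_iw_j^{-1},
\]
and the heart of the proof is to show that adding $R$ to the Hilbert-scheme-like sum produces $kq + \sum_{\sq}\theta(\sq)$. For this I would invoke the telescoping identity that for $\sq\in\lambda^{(n-k)}$ and $\square'\notin\lambda^{(n-k)}$, the expression $(q-1)(t-1)\chi(\sq)\chi(\square')^{-1}$ contributes nontrivially only when $\sq$ and $\square'$ share a row or a column, and the nonzero contributions reduce to an \emph{arm-shift} or a \emph{leg-shift} of the relevant summand.

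Second, I would organize these contributions per cell. For each added box $\square_j$ lying in the same row as $\sq$, the telescoping shifts the arm in the first summand by one; iterating over such $j$ moves $a(\sq,0)$ stepwise through the chain to $a(\sq,k)$, which explains the ``no box above'' case. The unique (by the horizontal-strip condition) added box $\square_i$ in the column above $\sq$, when it exists, simultaneously shifts the leg by one (turning $l'(\sq)$ into $l(\sq)$) and splits the arm chain at position $i$: on the $q^{a+1}t^{-l-1}$ side one reads the arm $a(\sq,i)$ after $\square_i$ is added, and on the $q^{-a}t^{l+1}$ side one reads $a(\sq,i-1)$ before it. The off-diagonal sum $(q-1)\sum_{i>j}w_iw_j^{-1}$ supplies exactly the cross-corrections needed when $\sq$ has added boxes in both its row and column, while the diagonal $(q-1)k$ combines with the constant residuals in $W^*(1 - (q-1)(t-1) B_{\lambda^{(n-k)}})$ to assemble the free summand $kq$ (as can already be verified in the extreme case $\lambda^{(n-k)} = \emptyset$, $k = n$, corresponding to $\PFH_{n,0} = \BC^n$).

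The \textbf{main obstacle} will be executing this bookkeeping cleanly: verifying the telescoping identity (which is standard but notation-heavy) and, crucially, tracking the splitting $a(\sq,0)\to\cdots\to a(\sq,k)$ so that the index $i$ of any added box above $\sq$ correctly interpolates the two arm values $a(\sq,i-1)$ and $a(\sq,i)$. The horizontal-strip condition $yI_{n-k}\subset I_n$, equivalent to \eqref{cyclic}, is essential: it ensures at most one ``box above'' per column, so that the two cases in the statement partition the cells of $\lambda^{(n-k)}$ unambiguously, and the nested chain $\lambda^{(n-k)}\subset\cdots\subset\lambda^{(n)}$ provides the unambiguous meaning of $a(\sq,j)$ for every intermediate index $j$.
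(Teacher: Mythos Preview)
Your approach is exactly what the paper does: the paper's entire proof is the sentence ``By using \eqref{eq:tangent char} and \eqref{eq: arms legs identity}, one can check the following,'' so your outline of the cell-by-cell telescoping is already more explicit than the original. One small bookkeeping correction: in the ``no box above'' case it is the second summand $q^{-a}t^{l+1}$ whose arm is shifted from $a(\sq,0)$ to $a(\sq,k)$ by the row additions (the first summand retains $a(\sq,0)$), but this will sort itself out when you actually execute the telescoping.
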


\section{Geometric operators}
\subsection{K-theory}
\begin{definition}
An algebraic variety $X$ with an action of $\BT=\BC^*_{q}\times \BC^{*}_t$ will be called \emph{good} if
\begin{enumerate}
\item $X$ is smooth,
\item all the $\BT$ fixed points on $X$ are isolated. 
\end{enumerate}
\end{definition}
Let $X$ be a good space. We denote by $K(X)$ the $\BT$-equivariant K-theory of $X$ and by $\bar K(X)$ the localization
\[
\bar K(X) = K(X)\otimes_{\BQ[q^{\pm1},t^{\pm1}]} \BQ(q,t).
\]
For a fixed point $x\in X$ we denote by $[x]=O_x$ its class in $K(X)$ and by $[x]'$ the \emph{dual} class
\[
[x]' = \frac{[x]}{\Lambda^* \Omega_x} \in \bar K(X),
\]
where 
\[
\Lambda^* \Omega_x = \sum_{i} (-1)^i \Lambda^i \Omega_x.
\]
Let $f:X\to Y$ be an equivariant map between good spaces. The pullback map in equivariant $K$-theory is given as follows: for any fixed point $y\in Y$ we have
\[
f^* [y]' = \sum_{x\in X^\BT: f(x)=y} [x]'.
\]
If $f$ is proper, then for any fixed point $x\in X$ we have
\[
f_* [x] = [f(x)].
\]
\begin{remark}
By Thomason localization theorem we have an isomorphism 
\[
\bar K(X)\cong \bigoplus_{x\in X^{\BT}} [x] \bar K(\text{point}),
\]
see e.g. \cite{okounkov2015lectures}. Thus we can define $f_*$ by the above formula even if $f$ is not proper.
\end{remark}


By abuse of notation we will denote by $I_{\lambda,w}\in K(\PFH_{n,n+k})$ resp. $I_{\lambda,w}'\in \bar K(\PFH_{n,n+k})$ the class resp. the dual class of the fixed point $I_{\lambda,w}$.

\subsection{Affine Hecke action}

For $1\le m\le k-1$ consider the space $\PFH_{n,n-k}^{(m)}$ consisting of partial flags $I_n\subset \ldots \subset I_{n-m+1}\subset I_{n-m-1}\subset \ldots I_{n-k}$ with the same condition $yI_{n-k}\subset I_{n}$. In complete parallel with Theorem \ref{th:smooth}, one can prove that this space is smooth.  There is a natural projection $\pi:\PFH_{n,n-k}\to \PFH_{n,n-k}^{(m)}$, which is projective. For a fixed point $I_{\lambda^{(\bullet)}}\in\PFH_{n,n-k}$ we have that $\pi(I_{\lambda^{(\bullet)}})=I_{\lambda'^{(\bullet)}}$ where the sequence of partitions $\lambda'^{(\bullet)}$ is obtained from $\lambda^{(\bullet)}$ by removing $\lambda^{(n-m)}$. There is at most one other fixed point that goes to $I_{\lambda'^{(\bullet)}}$, corresponding to a sequence which we denote by $s_m(\lambda^{(\bullet)})$. If $I_{\lambda^{(\bullet)}}$ is specified as $I_{\lambda,w}$ then $I_{s_m(\lambda^{(\bullet)})}=I_{\lambda,s_m(w)}$, where $s_m$ swaps $w_m$ and $w_{m+1}$. A formula similar to \eqref{eq:tangent char} can be proved for $I_{\lambda'^{(\bullet)}}$, we have
\[
\ch \Omega_{\lambda'^{(\bullet)}} \PFH_{n,n-k}^{(m)} = qt B_{\lambda^{(n-k)}} + B_{\lambda^{(n)}}^* - (t-1)(q-1) B_{\lambda^{(n-k)}} B_{\lambda^{(n)}}^* + (q-1) \sum_{k-1\geq i\geq j \geq 1} w_i' w_j'^*,
\]
where 
\[
w_i' = \begin{cases}
w_i & (i<m),\\
w_m + w_{m+1} & (i=m),\\
w_{i+1} & (i>m).
\end{cases}
\]
Therefore we have
\[
\ch \Omega_{\lambda'^{(\bullet)}} - \ch \Omega_{\lambda^{(\bullet)}} = (q-1) w_{m} w_{m+1}^{-1},
\]
\[
\ch \Omega_{\lambda'^{(\bullet)}} - \ch \Omega_{s(\lambda^{(\bullet)})} = (q-1) w_{m+1} w_{m}^{-1}.
\]
We obtain
\[
\pi^* \pi_* I_{\lambda,w} = \Lambda^*\left((q-1) w_{m} w_{m+1}^{-1}\right) I_{\lambda,w} +  \Lambda^*\left((q-1) w_{m+1} w_{m}^{-1}\right) I_{\lambda,s_m(w)}
\]
\[
= \frac{1-q w_{m} w_{m+1}^{-1}}{1-w_{m} w_{m+1}^{-1}} I_{\lambda,w} + 
\frac{1-q w_{m+1} w_{m}^{-1}}{1-w_{m+1} w_{m}^{-1}} I_{\lambda,s_m(w)}.
\]
Note that the second summand should be omitted if $I_{\lambda^{(\bullet)}}$ is the only fixed point that goes to $I_{\lambda'^{(\bullet)}}$. This happens precisely when $\lambda^{(n-m+1)}\setminus\lambda^{(n-m-1)}$ is a pair of horizontally adjacent cells, i.e. $w_{m}=q w_{m+1}$. In such situation the factor in front of $I_{\lambda, s_m(w)}$ vanishes anyway, so the formula still holds formally even though $I_{\lambda, s_m(w)}$ does not correspond to a point in $\PFH_{n,n-k}$.

We get the following lemma:
\begin{lemma}
Let $T_m=\pi^*\pi_*-q.$ Then 
\begin{equation}
\label{eq: T action}
T_m(I_{\lambda,w})=\frac{(q-1) w_{m+1}}{w_{m}-w_{m+1}}I_{\lambda,w}+\frac{w_{m}-qw_{m+1}}{w_{m}-w_{m+1}}I_{\lambda, s_m(w)}.
\end{equation}
\end{lemma}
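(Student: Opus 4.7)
The plan is essentially an algebraic simplification of the formula for $\pi^*\pi_* I_{\lambda,w}$ that is already derived in the paragraph immediately preceding the lemma. I would not reprove that derivation, but rather record the ingredients that go into it: one first applies the pushforward rule to get $\pi_* I_{\lambda,w} = I_{\lambda'^{(\bullet)}}$; rewrites this as $\Lambda^*\Omega_{\lambda'^{(\bullet)}} \cdot I_{\lambda'^{(\bullet)}}'$ via the relation between ordinary and dual classes; pulls back the dual class using $\pi^* I_{\lambda'^{(\bullet)}}' = I_{\lambda,w}' + I_{\lambda, s_m(w)}'$; and finally converts back to ordinary classes by means of the two tangent character differences $\ch\Omega_{\lambda'^{(\bullet)}} - \ch\Omega_{\lambda^{(\bullet)}} = (q-1)w_m w_{m+1}^{-1}$ and $\ch\Omega_{\lambda'^{(\bullet)}} - \ch\Omega_{s_m(\lambda^{(\bullet)})} = (q-1)w_{m+1}w_m^{-1}$.

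With that formula in hand, the remaining step is to subtract $q\cdot I_{\lambda,w}$ and rewrite in the form claimed. The off-diagonal coefficient $(1-qw_{m+1}w_m^{-1})/(1-w_{m+1}w_m^{-1})$ is untouched and becomes $(w_m - qw_{m+1})/(w_m - w_{m+1})$ upon multiplying numerator and denominator by $w_m$. The diagonal coefficient collapses as
\[
\frac{1-qw_m w_{m+1}^{-1}}{1-w_m w_{m+1}^{-1}} - q \;=\; \frac{1-q}{1-w_m w_{m+1}^{-1}} \;=\; \frac{(q-1)w_{m+1}}{w_m - w_{m+1}},
\]
which matches the stated formula verbatim.

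The one delicate point I would verify concerns the degenerate case where $\lambda^{(n-m+1)} \setminus \lambda^{(n-m-1)}$ is a pair of horizontally adjacent cells, so that $w_m = qw_{m+1}$ and $I_{\lambda,s_m(w)}$ does not correspond to a genuine fixed point of $\PFH_{n,n-k}$. The text already observes that in this case the off-diagonal term must be omitted; its coefficient $w_m - qw_{m+1}$ vanishes automatically, so the formula continues to hold as stated. I do not foresee any real obstacle here: the entire geometric content sits in the derivation of $\pi^*\pi_*$ that precedes the lemma, and the lemma itself is a purely algebraic repackaging. As a sanity check one can also read off the Hecke eigenvalues of $T_m$ on the two-dimensional subspace spanned by $\{I_{\lambda,w}, I_{\lambda,s_m(w)}\}$: the trace is $1-q$ and the determinant is $-q$, giving eigenvalues $1$ and $-q$, consistent with the Hecke relation $(T_m-1)(T_m+q)=0$.
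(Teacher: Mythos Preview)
Your proposal is correct and follows exactly the paper's approach: the paper derives the formula for $\pi^*\pi_* I_{\lambda,w}$ in the paragraph preceding the lemma (including the degenerate case $w_m=qw_{m+1}$), and the lemma is stated as an immediate consequence, so the only content is the algebraic simplification you spell out. Your recap of the pushforward/pullback mechanism and the sanity check on Hecke eigenvalues are accurate extras, but the core argument is the same as the paper's.
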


The operators $z_i$ are given by multiplication by line bundles $\CL_j=I_{n-j}/I_{n-j+1}$. Note that we have
\begin{equation}
\label{eq: z action}
\CL_j I_{\lambda,w} =  w_j I_{\lambda,w}.
\end{equation}




\subsection{Creation and annihilation} 
There are natural projection maps forgetting the first 
and the last ideal respectively 
\[
f:\PFH_{n+1,n-k}\to \PFH_{n,n-k},\ g:\PFH_{n,n-k}\to \PFH_{n,n-k+1}. 
\]
Here $g$ is projective. We will denote
\[
d_- = g_*, \quad d_+ = q^k (q-1) f^*.
\]
Note that $d_+$ increases $k$ and $d_-$ decreases $k$.
\begin{lemma}\label{lem:d operators}
We have
\[
d_- I_{\lambda,w x} = I_{\lambda,w},
\]
\[
d_+ I_{\lambda,w} = -q^k\sum_{x} x d_{\lambda+x, \lambda} \prod_{i=1}^k \frac{x- t w_i}{x - qt w_i} I_{\lambda+x,x w},
\]
where $xw=(x,w_1,w_2,\ldots,w_k)$, and $d_{\lambda, \mu}$ is the Pieri coefficient
\[d_{\lambda,\mu}(q,t)=
  \prod_{s\in R_{\lambda,\mu}}
  \frac{q^{a_\mu(s)}-t^{l_\mu(s)+1}}
  {q^{a_\lambda(s)}-t^{l_\lambda(s)+1}}
    \prod_{s\in C_{\lambda,\mu}}
  \frac{q^{a_\mu(s)+1}-t^{l_\mu(s)}}
  {q^{a_\lambda(s)+1}-t^{l_\lambda(s)}}
\]
for multiplication by $e_1$ in the modified
Macdonald basis e.g. from \cite{garsia2014new} formula 3.1, which
satisfies
\[
e_1 \tilde H_{\mu} = \sum_{\lambda} d_{\lambda,\mu} \tilde H_{\lambda}.
\]
Here $R_{\lambda,\mu}$ is the set of cells in
the row of the unique box in $\mu,\lambda$, and $C_{\lambda,\mu}$ is the
set of cells in the column.

\end{lemma}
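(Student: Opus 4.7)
The first assertion is immediate from the definition $d_-=g_*$ together with the pushforward convention for fixed-point classes: since $g$ is proper and equivariant, and on fixed points it simply forgets the last component of the flag, sending $I_{\lambda,w_1,\ldots,w_k}$ to $I_{\lambda,w_1,\ldots,w_{k-1}}$, one has $d_- I_{\lambda,wx} = g_* I_{\lambda,wx} = I_{\lambda,w}$.

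For $d_+ = q^k(q-1)f^*$, the plan is to apply the localization formula $f^* [y]' = \sum_{f(x)=y} [x]'$ on the dual classes and then convert back to the ordinary classes using $[y] = \Lambda^*\Omega_y \cdot [y]'$. This yields
\[
f^* I_{\lambda,w} = \sum_x \Lambda^*\!\bigl(-D(x)\bigr)\, I_{\lambda+x, xw},
\]
where $D(x)=\Omega_{I_{\lambda+x,xw}}-\Omega_{I_{\lambda,w}}$ and the sum runs over addable boxes $x$ of $\lambda$ that still produce a horizontal strip. Using \eqref{eq:tangent char} together with the identity $B_{\lambda^{(n-k)}}=B_\lambda-\sum_i w_i$, a direct calculation simplifies this difference to
\[
D(x) = x^{-1}\bigl[1-(q-1)(t-1)B_\lambda\bigr] + (q-1) + (q-1)t\, x^{-1}\sum_{i=1}^k w_i.
\]
The $w$-dependent last term factors off cleanly after applying $\Lambda^*(-\,\cdot\,)$, producing precisely $\prod_i (x-tw_i)/(x-qtw_i)$ via the identity $(1 - tw_i/x)/(1 - qtw_i/x) = \Lambda^*\!\bigl((1-q)tw_i/x\bigr)$.

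What remains is the $w$-independent identity
\[
(q-1)\,\Lambda^*\!\bigl(-x^{-1}[1-(q-1)(t-1)B_\lambda]-(q-1)\bigr) = -x\, d_{\lambda+x,\lambda}, \qquad (\ast)
\]
which is the lemma itself specialized to the Hilbert scheme case $k=0$. To prove $(\ast)$, I would expand the left-hand side as a product of factors of the form $(1-q\chi(s)/x)(1-t\chi(s)/x)/\bigl[(1-\chi(s)/x)(1-qt\chi(s)/x)\bigr]$ indexed by cells $s\in\lambda$, obtained by writing $(q-1)(t-1)B_\lambda/x$ as a virtual sum over cells. Factors coming from cells outside the row and column of $x$ telescope and cancel pairwise against neighboring cells; only the cells in the row and the column of $x$ survive, and these match the explicit arm--leg product defining $d_{\lambda+x,\lambda}$. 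A subtle point is verifying that the trivial character has coefficient zero in the exponent (so that $\Lambda^*$ is well-defined), which amounts to the combinatorial identity $\mathbf{1}_{(c_0-1,r_0)\in\lambda} + \mathbf{1}_{(c_0,r_0-1)\in\lambda} - \mathbf{1}_{(c_0-1,r_0-1)\in\lambda} = 1 - \mathbf{1}_{\lambda=\emptyset}$ for an addable corner at $(c_0,r_0)$.

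Combining these steps yields the claimed formula for $d_+$. A final sanity check is that the sum can be harmlessly extended to \emph{all} addable boxes $x$ of $\lambda$: if adding $x$ destroys the horizontal strip condition, then $x$ shares a column with some $w_i$, and by the horizontal strip property of $\{w_1,\ldots,w_k\}$ that $w_i$ must be the top cell of its column in $\lambda$, forcing $x=tw_i$; the factor $(x-tw_i)$ in the product then vanishes and the invalid term drops out automatically. The main obstacle is the verification of $(\ast)$: the cell-by-cell comparison of $\Lambda^*$ of the character expression with the Pieri product formula is a classical but delicate Haiman-type calculation whose massive cancellations rely on the Young-diagram geometry of arms, legs, and addable corners.
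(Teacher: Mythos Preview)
Your reduction is exactly the paper's: compute the difference of cotangent characters via \eqref{eq:tangent char}, peel off the $w$-dependent part as $\prod_i (x-tw_i)/(x-qtw_i)$, and reduce everything to the $k=0$ identity you call $(\ast)$, which is precisely the paper's \eqref{eq:d formula}
\[
d_{\lambda+x,\lambda} = x^{-1}\,\Lambda^*\!\bigl(-x^{-1} + (t-1)(q-1)B_\lambda x^{-1} + 1\bigr).
\]
Your sanity check that forbidden $x$ are automatically killed by a factor $(x-tw_i)$ is correct and useful, though the paper does not bother to say it.

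Where you diverge is in the proof of $(\ast)$ itself. You propose a direct cell-by-cell expansion of the $\Lambda^*$ product and a comparison with the explicit arm--leg formula for the Pieri coefficient. The paper instead takes a generating-function shortcut: it invokes the known summation identity
\[
\sum_x d_{\lambda+x,\lambda}\, x^{\,i+1} = (-1)^i e_i\!\bigl[-1+(q-1)(t-1)B_\lambda\bigr]\qquad(i\ge 0),
\]
multiplies by $u^i$, sums over $i$ to obtain the rational-function identity
\[
\sum_x d_{\lambda+x,\lambda}\,\frac{x}{1-ux} \;=\; \Lambda^*\!\bigl((-1+(q-1)(t-1)B_\lambda)\,u\bigr),
\]
and then reads off \eqref{eq:d formula} as the residue at $u=x^{-1}$ (moving the factor $1-ux$ inside $\Lambda^*$ and substituting). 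This avoids any arm--leg bookkeeping entirely.

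Your route is viable, but the specific mechanism you describe---``cells outside the row and column of $x$ cancel pairwise against neighbors''---is not quite how the cancellation goes. If you regroup the product $\prod_{\square\in\lambda} \tfrac{(1-q\chi(\square)/x)(1-t\chi(\square)/x)}{(1-\chi(\square)/x)(1-qt\chi(\square)/x)}$ by the value of $\chi(\cdot)/x$, the surviving factors are governed by the \emph{boundary} of $\lambda$ (addable and removable corners), not by the row and column of $x$; one then has to reorganize that boundary product into the arm--leg form of $d_{\lambda+x,\lambda}$. This is doable and classical, but it is a second combinatorial identity on top of the first, and you would need to carry it out. The paper's residue argument sidesteps both steps at once.
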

\begin{proof}
The formula for $d_-$ is immediate from the definition. 
For $d_+$ we calculate
\[
\ch \Omega_{\lambda,w} - \ch \Omega_{\lambda+x,xw} =  -x^{-1} + (t-1)(q-1) B_{\lambda^{(n-k)}} x^{-1} - (q-1) x^{-1} \sum_{i=1}^k w_i - (q-1)
\]
\[
= - x^{-1} + (t-1) (q-1) B_{\lambda} x^{-1} - (q-1) -  t(q-1)x^{-1} \sum_{i=1}^k w_i.
\]
Below we will show
\begin{equation}\label{eq:d formula}
d_{\lambda+x, \lambda} = x^{-1} \Lambda^*(- x^{-1} + (t-1) (q-1) B_{\lambda} x^{-1} + 1).
\end{equation}
Assuming \eqref{eq:d formula} we have
\[
f^* I_{\lambda,w} = \sum_{x} x d_{\lambda+x, \lambda} \frac{1}{1-q} \prod_{i=1}^k \frac{x- t w_i}{x - qt w_i} I_{\lambda+x,x w}.
\]
and we are done.

To prove \eqref{eq:d formula} we will use the following summation formula for the Pieri coefficients, see e.g. Theorem 2.4 b) in \cite{garsia2014some}:
\[
\sum_{x} d_{\lambda+x,\lambda} x^{i+1} = (-1)^i e_i[-1+(q-1)(t-1)B_\lambda] \quad(i\geq 0).
\]
Let $u$ be a formal variable. Multiplying both sides by $u^k$ and summing over $k\geq 0$ produces the following identity of rational functions:
\[
\sum_{x} d_{\lambda+x,\lambda} \frac{x}{1-ux} = \Lambda^*((-1+(q-1)(t-1)B_\lambda)u).
\]
Note that the left hand side has simple pole at $u=x^{-1}$ and 
\[
x d_{\lambda+x,\lambda} = \left((1-ux) \Lambda^*((-1+(q-1)(t-1)B_\lambda)u)\right)\big|_{u=x^{-1}}.
\]
Moving $1-ux$ inside $\Lambda^*$ we obtain
\[
x d_{\lambda+x,\lambda} = \Lambda^*((-1+(q-1)(t-1)B_\lambda)u+ux) \big|_{u=x^{-1}}.
\]
Now we can substitute $u=x^{-1}$ before applying $\Lambda^*$ and arrive at \eqref{eq:d formula}. 
\end{proof}

\begin{example}
Let $k=0$. We have $\PFH_{n,n}=\Hilb_n$. Let us identify the fixed point corresponding to a partition $\lambda$ with symmetric function 
\[
I_\lambda = \frac{\tilde H_\lambda}{\tilde H_\lambda[-1]} = (-1)^{|\lambda|} q^{-n(\lambda')} t^{-n(\lambda)} \tilde H_\lambda = \tilde H_\lambda \prod_{\square\in\lambda} (-\chi(\square)^{-1}),
\]
where $\tilde H_\lambda$ is the modified Macdonald polynomial. Then we obtain
\[
d_+ \tilde H_\lambda = - \tilde H_\lambda[-1] \sum_{w_1} w_1 d_{\lambda+w_1,\lambda} I_{\lambda+w_1, w_1},
\]
and using $\tilde H_{\lambda+w_1}[-1] = - w_1 \tilde H_\lambda[-1]$
\[
d_- d_+ \tilde H_\lambda = \sum_{w_1} d_{\lambda+w_1,\lambda} \tilde H_{\lambda+w_1},
\]
therefore $d_- d_+$ acts like the operator of multiplication by $e_1$, which matches the action of $\BA_{q,t}$ on $V_\bullet$.
\end{example}

\section{Verification of relations}

Let
\[U_k=\bigoplus_{n\geq k} \bar K(\PFH_{n,n-k}),\quad
  U_\bullet= \bigoplus_{k\geq 0} U_k.\]
In this section, we will prove the following theorem:
\begin{theorem}\label{thm:verification}
  The geometric operators written as $T_i, z_i, d_+$ and $d_-$
  define a representation of the algebra $\BB_{q,t}$ on $U_\bullet$,
  and therefore a representation of $\mathbb{A}_{q,t}$ via
  the map $\beta : \mathbb{A}_{q,t} \rightarrow \mathbb{B}_{q,t}$.
\end{theorem}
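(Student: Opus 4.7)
The strategy is to verify the defining relations of $\BB_{q,t}$ one by one on the fixed-point basis of $U_\bullet$, using formulas \eqref{eq: T action}, \eqref{eq: z action}, and Lemma \ref{lem:d operators}. By Thomason localization it suffices to check each identity at each torus-fixed point.

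Most relations reduce to bookkeeping. The Hecke relations for $T_i$ follow either from the Lusztig construction $T_i = \pi^{(i)}_* (\pi^{(i)})^* - q$, where $\pi^{(i)}:\PFH_{n,n-k}\to \PFH_{n,n-k}^{(i)}$ is fiber-wise a $\BP^1$-fibration, or by direct $2\times 2$ verification on the pair $\{I_{\lambda,w}, I_{\lambda,s_m(w)}\}$ using \eqref{eq: T action}. The affine-Hecke relation $T_i^{-1} z_{i+1} T_i^{-1} = q^{-1} z_i$, the commutativities $z_iz_j = z_j z_i$ and $z_iT_j=T_jz_i$ for $|i-j|>1$, as well as the structural relations $d_- T_i = T_i d_-$ for $i\le k{-}2$, $d_-^2 T_{k-1} = d_-^2$, $d_+ T_i = T_{i+1} d_+$, $T_1 d_+^2 = d_+^2$, $z_i d_- = d_- z_i$, and $d_+ z_i = z_{i+1} d_+$ are all immediate consequences of the observation that $T_i$ only swaps $w_i\leftrightarrow w_{i+1}$, $z_i$ acts diagonally with eigenvalue $w_i$, $d_-$ drops the last entry $w_k$, and $d_+$ prepends a new entry $x$.

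The substantive content concerns the $\varphi$-relations and the final commutation. Both $d_-d_+$ and $d_+d_-$ send $I_{\lambda,w}$ to a sum over addable boxes $x$ of $\lambda$ of terms based at the \emph{same} fixed points $I_{\lambda+x,(x,w_1,\ldots,w_{k-1})}\in\PFH_{n+1,n-k+1}$. Setting $C := -q^{k-1}\, x\, d_{\lambda+x,\lambda}\prod_{i=1}^{k-1}\tfrac{x-tw_i}{x-qtw_i}$, Lemma \ref{lem:d operators} gives the coefficient of $I_{\lambda+x,(x,w_1,\ldots,w_{k-1})}$ in $d_+d_-\, I_{\lambda,w}$ as $C$ and in $d_-d_+\, I_{\lambda,w}$ as $qC\cdot\tfrac{x-tw_k}{x-qtw_k}$. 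The elementary identities
\begin{equation*}
1 - q\cdot\tfrac{x-tw_k}{x-qtw_k} \;=\; \tfrac{-(q-1)x}{x-qtw_k},\qquad 1-\tfrac{x-tw_k}{x-qtw_k}\;=\; \tfrac{-(q-1)tw_k}{x-qtw_k}
\end{equation*}
then yield compact closed forms for the coefficients of $\varphi = (q-1)^{-1}[d_+, d_-]$. The $\varphi$-relations $q\varphi d_- = d_-\varphi T_{k-1}$ and $T_1\varphi d_+ = q d_+\varphi$ reduce to term-by-term checks using \eqref{eq: T action} and these formulas.

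The main obstacle is the relation $z_1(qd_+d_- - d_-d_+) = qt(d_+d_- - d_-d_+)z_k$. From the above, the coefficient of $I_{\lambda+x,(x,w_1,\ldots,w_{k-1})}$ in $(qd_+d_--d_-d_+)I_{\lambda,w}$ equals $qC\cdot\tfrac{-(q-1)tw_k}{x-qtw_k}$, while in $(d_+d_--d_-d_+)I_{\lambda,w}$ it equals $C\cdot\tfrac{-(q-1)x}{x-qtw_k}$. Since the eigenvalue of $z_1$ at $I_{\lambda+x,(x,w_1,\ldots,w_{k-1})}$ is $x$ and that of $z_k$ at $I_{\lambda,w}$ is $w_k$, both sides of the desired identity produce the common coefficient $\tfrac{-qt(q-1)w_k x\, C}{x-qtw_k}$, completing the verification. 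The second assertion of the theorem then follows automatically from the homomorphism $\beta:\BA_{q,t}\to\BB_{q,t}$ constructed in the previous section.
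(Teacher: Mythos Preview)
Your overall approach matches the paper's, and your verification of the relation $z_1(qd_+d_- - d_-d_+) = qt(d_+d_- - d_-d_+)z_k$ is correct and is exactly how the paper does it. However, there is a genuine gap in two places that you dismiss as routine.

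The relation $T_1 d_+^2 = d_+^2$ is \emph{not} an immediate consequence of ``$T_i$ only swaps $w_i\leftrightarrow w_{i+1}$ and $d_+$ prepends an entry.'' Applying $d_+^2$ to $I_{\lambda,w}$ yields a sum over ordered pairs $(x,y)$ of successively addable boxes, with coefficient $A_{x,y}$ at $I_{\lambda+x+y,(y,x,w)}$. The relation $T_1 d_+^2 = d_+^2$ becomes the identity $(qx-y)A_{x,y} = (qy-x)A_{y,x}$, which is a nontrivial symmetry statement about products of Pieri coefficients $d_{\lambda+x,\lambda}d_{\lambda+x+y,\lambda+x}$. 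In the paper this is established by invoking the explicit formula \eqref{eq:d formula},
\[
d_{\lambda+x,\lambda} = x^{-1}\Lambda^*\bigl(-x^{-1}+(t-1)(q-1)B_\lambda x^{-1}+1\bigr),
\]
which makes the product $d_{\lambda+x,\lambda}d_{\lambda+x+y,\lambda+x}\cdot\frac{y-tx}{y-qtx}$ equal to $C_{\lambda,w}(x,y)\cdot\frac{y-x}{y-qx}$ for a function $C_{\lambda,w}$ that is \emph{symmetric} in $x,y$. Without this, the relation simply does not follow from bookkeeping.

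The same issue afflicts your handling of $T_1\varphi d_+ = q d_+\varphi$: both sides again involve two box-additions, and the paper's verification reduces to the same symmetry of $C_{\lambda,w}(x,y)$ together with a rational-function identity, with a separate check in the adjacent-box case $y=qx$. By contrast, $q\varphi d_- = d_-\varphi T_{k-1}$ genuinely is the easy rational-function check you describe. So your write-up is fine once you supply the symmetry argument (i.e.\ use \eqref{eq:d formula}) for $T_1d_+^2=d_+^2$ and $T_1\varphi d_+ = qd_+\varphi$; everything else is as in the paper.
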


We split the relations into several groups and prove them in
the subsections below.
We will denote
$H_{\lambda,w}=(-1)^{|\lambda|} q^{n(\lambda')} t^{n(\lambda)} I_{\lambda,w}$,
so that the $H_{\lambda,w}$ form a basis of $U_\bullet$.
Note that the formulas for the action of $T_m$, $\CL_j$, $d_-$ in the $H$-basis are the same as for $I$-basis.

\subsection{$z_i, T_i$} The following relations are easy to verify
\begin{proposition} The operators $z_i:=\CL_i$, $T_i$ satisfy relations of the (conjugate) affine Hecke algebra:
\begin{equation*}
(T_i-1)(T_i+q)=0, \quad T_i T_{i+1} T_i = T_{i+1} T_i T_{i+1}, \quad T_i T_j = T_j T_i \quad (|i-j|>1),
\end{equation*}
\begin{equation*}
T_i z_{i} T_i = q z_{i+1} \; (1\leq i\leq k-1),
\end{equation*}
\begin{equation*}
z_i T_j = T_j z_i  \; (i\notin \{j, j+1\}),\;
z_i z_j = z_j z_i \; (1\leq i,j \leq k),
\end{equation*}
\end{proposition}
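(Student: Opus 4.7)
The plan is to evaluate both sides of each relation on the fixed-point basis $\{I_{\lambda,w}\}$ using the explicit formulas \eqref{eq: T action} and \eqref{eq: z action}. Since $\lambda$ stays fixed throughout and all the operators involved only permute the components of $w=(w_1,\ldots,w_k)$, every identity reduces to an elementary algebraic computation in at most three of the variables $w_j$.

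The first batch of relations is essentially immediate. The $z_i$ act diagonally, so $z_i z_j=z_j z_i$ is automatic. The operator $T_j$ affects only the pair $(w_j,w_{j+1})$ and leaves all other $w_i$ (and in particular the eigenvalue of $z_i$ for $i\notin\{j,j+1\}$) untouched, so $T_i T_j=T_j T_i$ for $|i-j|>1$ and $z_i T_j=T_j z_i$ for $i\notin\{j,j+1\}$ follow at once from \eqref{eq: T action}. Geometrically, these facts are reflections of the projection formula: in the relevant range $\CL_i$ is a pullback from $\PFH_{n,n-k}^{(j)}$ and therefore commutes with $T_j=\pi^*\pi_*-q$.

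For the quadratic Hecke relation $(T_i-1)(T_i+q)=0$ and for $T_i z_i T_i=q z_{i+1}$, I work on the two-dimensional invariant subspace $\mathrm{span}(I_{\lambda,w},I_{\lambda,s_m(w)})$. Writing $A=(q-1)w_{m+1}/(w_m-w_{m+1})$, $B=(w_m-qw_{m+1})/(w_m-w_{m+1})$ and $A',B'$ for the same expressions with $w_m\leftrightarrow w_{m+1}$, the Hecke relation reduces to the trace-determinant identities $A+A'=1-q$ and $A^2+BB'=(1-q)A+q$, both trivial. For $T_m z_m T_m=q z_{m+1}$, note that $z_m$ on this subspace is diagonal with eigenvalues $w_m$ and $w_{m+1}$; the off-diagonal term of $T_m z_m T_m$ then vanishes because $Aw_m+A'w_{m+1}=0$, while the diagonal term collapses to $qw_{m+1}$ via the elementary identity
\[
(q-1)^2 w_m w_{m+1} - (w_m-qw_{m+1})(w_{m+1}-qw_m) = q(w_m-w_{m+1})^2,
\]
which is exactly the $z_{m+1}$-eigenvalue, scaled by $q$, on $I_{\lambda,w}$.

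The main obstacle is the braid relation $T_i T_{i+1} T_i=T_{i+1} T_i T_{i+1}$. The cleanest route is to observe that, viewed as permuting the triple $(w_i,w_{i+1},w_{i+2})$ while fixing everything else, formula \eqref{eq: T action} is precisely the Demazure--Lusztig operator in the polynomial representation of the affine Hecke algebra, with $I_{\lambda,\sigma(w)}$ playing the role of the translate $\sigma\cdot f$ under the $S_3$-action. The braid identity therefore reduces to its classical counterpart, which one can invoke from \cite{lusztig1985equivariant} or check by direct computation on the six-element $S_3$-orbit of $(w_i,w_{i+1},w_{i+2})$. A small subtlety already noted after Lemma~4.2 is that some fixed points in the orbit may fail to exist in $\PFH_{n,n-k}$; in those cases the corresponding structure constants vanish automatically, so the identity carries through formally.
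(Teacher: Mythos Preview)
Your proof is correct. The direct verification you carry out on the fixed-point basis using \eqref{eq: T action} and \eqref{eq: z action} is exactly the sort of computation the paper has in mind when it declares these relations ``easy to verify,'' and your treatment of the degenerate orbits (where some $I_{\lambda,s_m(w)}$ does not exist but the structure constant vanishes) matches the paper's remark after the lemma defining $T_m$.

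The one difference worth noting is that the paper supplements the statement with a conceptual explanation rather than an explicit calculation: it observes that, once $\lambda^{(n)}$ and $\lambda^{(n-k)}$ are fixed, the remaining data is a standard tableau on the horizontal-strip skew shape $\lambda^{(n)}\setminus\lambda^{(n-k)}$, and the formulas \eqref{eq: T action}, \eqref{eq: z action} coincide with the well-known ``multisegment'' or Young-tableau realization of affine Hecke algebra modules (as in \cite{vazirani2002parameterizing,ram2003affine}). Thus the relations follow by identification with a construction already known to produce a representation. Your explicit $2\times 2$ and $S_3$-orbit checks are the hands-on version of the same fact; the paper's route is shorter but relies on the cited literature, while yours is self-contained.
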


In fact, the construction of $z_i$ and $T_i$ is very similar to the classical construction of finite-dimensional representations of the affine Hecke algebra using ``multisegments'' (see e.g. \cite{vazirani2002parameterizing}).  The operators $T_i$ and $z_i$ do not change the biggest ideal $I_n$ and the smallest ideal $I_{n-k}$. In terms of the fixed point basis, this means that we can fix two partitions $\lambda_{n-k}\subset \lambda_n$ such that the skew shape $\lambda_{n}\setminus \lambda_{n-k}$ consists of several horizontal strips. The choice of $\lambda_{n-k+1},\ldots,\lambda_{n-1}$ is equivalent to the choice of a standard tableau of this skew shape. Then \eqref{eq: T action} and \eqref{eq: z action} 
agree with the action of the affine Hecke algebra on such standard tableaux \cite{vazirani2002parameterizing,ram2003affine}. 

\subsection{$d_-, d_+, T_i$} From Lemma \ref{lem:d operators} we obtain
\[
d_+ H_{\lambda,w} = q^k\sum_{x} d_{\lambda+x, \lambda} \prod_{i=1}^k \frac{x- t w_i}{x - qt w_i} H_{\lambda+x,x w},
\]
for $wy=(w_1,\ldots,w_{k-1},y)$
\[
d_- d_+ H_{\lambda,w y} = q^k\sum_{x} d_{\lambda+x, \lambda} \frac{x-ty}{x-qty} \prod_{i=1}^{k-1} \frac{x- t w_i}{x - qt w_i} H_{\lambda+x,x w},
\]
\[
d_+ d_- H_{\lambda,w y} = q^{k-1}\sum_{x} d_{\lambda+x, \lambda} \prod_{i=1}^{k-1} \frac{x- t w_i}{x - qt w_i} H_{\lambda+x,x w},
\]
\begin{equation}
\label{comm geometry}
\frac{d_+ d_- - d_- d_+}{q-1} H_{\lambda,w y} = -q^{k-1} \sum_{x} d_{\lambda+x, \lambda} \frac{x}{x-qt y} \prod_{i=1}^{k-1} \frac{x- t w_i}{x - qt w_i} H_{\lambda+x,x w},
\end{equation}
\begin{equation}
\label{qcomm geometry}
\frac{q d_+ d_- - d_- d_+}{q-1} H_{\lambda,w y} = -q^{k} t \sum_{x} d_{\lambda+x, \lambda} \frac{y}{x-qt y} \prod_{i=1}^{k-1} \frac{x- t w_i}{x - qt w_i} H_{\lambda+x,x w}.
\end{equation}
We have
\begin{proposition}
The operators $d_+, d_-, T_i$ extend to a representation of $\BA_q$ on $U_\bullet$.
\end{proposition}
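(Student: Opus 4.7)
The plan is to invoke Lemma~\ref{th:Iwahori}, which reduces the verification to three families of relations on $T_i, d_+, d_-$: the Hecke relations \eqref{eq:Trel}, the intertwining relations in \eqref{eq:Tdminusrel} and \eqref{eq:Tdplusrel} that do not involve any $y_i$, and the two $\varphi$-relations \eqref{eq:phidplusdminus} with $\varphi = \frac{1}{q-1}[d_+, d_-]$. The Hecke relations are already established in the preceding proposition, so only the last two families remain.

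Next I would check the intertwining relations $d_- T_i = T_i d_-$ for $i\le k-2$, $d_+ T_i = T_{i+1} d_+$ for $i\le k-1$, $d_-^2 T_{k-1}=d_-^2$, and $T_1 d_+^2 = d_+^2$ directly on the fixed-point basis. By Lemma~\ref{lem:d operators}, $d_-$ simply drops the last entry $w_k$ from the label $w=(w_1,\ldots,w_k)$, while $d_+$ prepends a new entry $x$ and sums over admissible added boxes with weights involving the Pieri coefficient $d_{\lambda+x,\lambda}$ and the rational factor $\prod_{i=1}^k (x-tw_i)/(x-qtw_i)$. By formula \eqref{eq: T action}, $T_i$ acts only on the slots $w_i, w_{i+1}$. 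Hence for $i\le k-2$ the actions of $T_i$ and $d_-$ affect disjoint slots and commute; for $i\le k-1$ the shift of indices induced by prepending an entry converts $T_i$ into $T_{i+1}$. The two idempotent-type relations $d_-^2 T_{k-1}=d_-^2$ and $T_1 d_+^2=d_+^2$ express the fact that the two coordinates dropped (resp.\ prepended) become symmetric once they are both outer: forgetting both the last and next-to-last entries makes the $T_{k-1}$-swap irrelevant, and summing the $d_+$ insertions over both positions makes the $T_1$-swap trivial.

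The remaining and most demanding task is the verification of the two $\varphi$-relations $q\varphi d_- = d_-\varphi T_{k-1}$ and $T_1\varphi d_+ = q d_+\varphi$. For this I would use formula \eqref{comm geometry} as an explicit presentation of the action of $\varphi$ on the $H_{\lambda,w}$-basis. Each side of each relation then becomes a finite sum indexed by an added box $x$, whose coefficients are products of $d_{\lambda+x,\lambda}$ with rational factors of the form $(x-tw_i)/(x-qtw_i)$ and $x/(x-qty)$. The required identities should reduce to purely rational-function identities in $x, w_1,\ldots,w_{k-1}, y$, most conveniently checked by comparing residues at the poles $x=qtw_i$ and $x=qty$, together with the value at $x=\infty$.

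The main obstacle is the bookkeeping in this last step: the extra factor $x/(x-qty)$ supplied by $\varphi$ must interact correctly with the swap $T_{k-1}$ (or $T_1$) and with the re-indexing caused by $d_-$ (or $d_+$), and in particular one must keep careful track of which variable plays the role of $y$ when $d_-$ is applied before versus after $\varphi$. Once this bookkeeping is organized, the identities should follow from elementary manipulations; no deeper input than rational-function calculus is needed, and the Pieri coefficient $d_{\lambda+x,\lambda}$ factors out uniformly on both sides so plays no essential role in the verification.
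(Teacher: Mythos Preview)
Your overall strategy---reduce to Lemma~\ref{th:Iwahori}, then verify the $T_i$-intertwining relations and the two $\varphi$-relations on the fixed-point basis---matches the paper exactly, and your treatment of $d_- T_i = T_i d_-$, $d_+ T_i = T_{i+1} d_+$, $d_-^2 T_{k-1}=d_-^2$, and $q\varphi d_- = d_-\varphi T_{k-1}$ is essentially correct (in each of these only one added box is in play, the Pieri coefficient does factor out, and what remains is a rational-function identity).

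The gap is in your last sentence. For the two relations $T_1 d_+^2 = d_+^2$ and $T_1\varphi d_+ = q d_+\varphi$, two boxes $x$ and $y$ are added in succession, and the relevant coefficient involves the product $d_{\lambda+x,\lambda}\, d_{\lambda+x+y,\lambda+x}$. The second factor depends on which box was added first, so this product is \emph{not} symmetric in $x,y$ and does \emph{not} factor out uniformly. The paper uses the explicit formula~\eqref{eq:d formula} for the Pieri coefficient to compute
\[
d_{\lambda+x,\lambda}\, d_{\lambda+x+y,\lambda+x}\,\frac{y-tx}{y-qtx}
= C_{\lambda,w}(x,y)\,\frac{y-x}{y-qx},
\]
where $C_{\lambda,w}(x,y)$ is genuinely symmetric in $x,y$; this symmetry is the key input that makes $(T_1-1)d_+^2=0$ and $T_1\varphi d_+ = q d_+\varphi$ hold (after a separate check for adjacent boxes $y=qx$). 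Without invoking~\eqref{eq:d formula} or an equivalent statement about the Pieri coefficients, your residue argument cannot close: the poles you list come only from the rational factors, while the asymmetry sits in the Pieri part.
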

\begin{proof}
The Hecke algebra relations for $T_i$ were verified above.
The relations $T_i d_- = d_- T_i$, $d_+ T_i = T_{i+1} d_+$ are straightforward. Then we need to check that
\[
d_-^2 T_k=d_-^2,\quad T_1 d_+^2 = d_+^2.
\]
The first one is straightforward. To establish the second one write
\[
d_+^2 H_{\lambda, w} = q^{2k+1} \sum_{x,y} d_{\lambda+x,\lambda} d_{\lambda+x+y,\lambda+x} \frac{y-tx}{y-qtx} \prod_{i=1}^{k-1} \frac{(x-t w_i)(y-t w_i)}{(x-q t w_i)(y-q t w_i)} H_{\lambda+x+y,yx w}.
\]
Note that there are no terms with $y=tx$. All the terms with $y=qx$ are invariant under $T_1$. Suppose $y\neq qx$, $y\neq tx$, in other words the cells $x,y$ are non-adjacent. Using \eqref{eq:d formula} we have
\[
d_{\lambda+x,\lambda} d_{\lambda+x+y,\lambda+x}
\]
\[
= (xy)^{-1} \Lambda^*\left(((q-1)(t-1)B_\lambda-1)(x^{-1} + y^{-1}) + (t-1)(q-1)x y^{-1} + 2\right),
\]
so 
\begin{equation}\label{eq:formula with C}
d_{\lambda+x,\lambda} d_{\lambda+x+y,\lambda+x} \frac{y-tx}{y-qtx} \prod_{i=1}^{k-1} \frac{(x-t w_i)(y-t w_i)}{(x-q t w_i)(y-q t w_i)} = C_{\lambda,w}(x,y) \frac{y-x}{y-qx},
\end{equation}
where the function $C_{\lambda,w}(x,y)$ is symmetric in $x,y$. So we have
\[
(T_1-1) d_+^2 H_{\lambda, w} = \sum_{x,y\,\text{non adjacent}} C_{\lambda,w}(x,y) (H_{\lambda+x+y,yxw} - H_{\lambda+x+y,xyw}) = 0.
\]

Denote by $\varphi$ the operator $\varphi=\frac{d_+ d_- - d_- d_+}{q-1}$,
\[
\varphi H_{\lambda, wy} = -q^{k-1} \sum_{x} d_{\lambda+x, \lambda} \frac{x}{x-qt y} \prod_{i=1}^{k-1} \frac{x- t w_i}{x - qt w_i} H_{\lambda+x,x w}.
\]
By Theorem \ref{th:Iwahori} it is enough to show that the following identities hold:
\[
q \varphi d_- = d_- \varphi T_{k-1},\quad T_1 \varphi d_+ = q d_+ \varphi.
\]
The first one is easier. Let 
\[
C_u = d_{\lambda+u,\lambda} \prod_{i=1}^{k-2} \frac{u-t w_i}{u-q t w_i}.
\]
Then we have
\[
q \varphi d_- H_{\lambda,w x y} = -q^{k-1} \sum_u \frac{u}{u-qtx} C_u H_{\lambda+u,u w},\quad d_- \varphi T_{k-1} 
\]
\[
 = -q^{k-1} \sum_u \left(\frac{(q-1) y}{x-y} \frac{u(u-tx)}{(u-qty)(u-qtx)} + \frac{x-qy}{x-y} \frac{u(u-ty)}{(u-qtx)(u-qty)}\right) C_u H_{\lambda+u,u w}.
\]
The rational function in parentheses equals $\frac{u}{u-qtx}$, so the identity holds. Finally we compare
\[
A=q d_+ \varphi H_{\lambda, w u} = -q^{k}\sum_{x,y} d_{\lambda+x,\lambda} d_{\lambda+x+y,\lambda+x} \frac{x(y-tx)}{(x-qtu)(y-qtx)} 
\]
\[
\times \prod_{i=1}^{k-1} \frac{(x-tw_i)(y-t w_i)}{(x-qtw_i)(y-qtw_i)} H_{\lambda+x+y,yxw}
\]
and
\[
B=T_1 \varphi d_+ H_{\lambda, w u} = -q^k T_1 \sum_{x,y} d_{\lambda+x,\lambda} d_{\lambda+x+y,\lambda+x} \frac{y(y-tx)(x-tu)}{(y-qtu)(y-qtx)(x-qtu)} 
\]
\[
\times \prod_{i=1}^{k-1} \frac{(x-tw_i)(y-t w_i)}{(x-qtw_i)(y-qtw_i)} H_{\lambda+x+y,yxw}.
\]
Similar to the computations with $d_+^2$ we analyze two cases. If $y=qx$, i.e. $x$ and $y$ are adjacent, we have $T_1 H_{\lambda+x+y,yxw}=H_{\lambda+x+y,yxw}$ and coefficients of these terms coincide. Suppose $x$ and $y$ are not adjacent. Using \eqref{eq:formula with C} we write the coefficient of $H_{\lambda+x+y,yxw}$ in $A$ as
\[
\frac{x(y-x)}{(x-qtu)(y-qx)} C_{\lambda,w}(x,y). 
\]
Using symmetry of $C_{\lambda,w}(x,y)$, we see that the corresponding coefficient in $B$ is
\[
\left(\frac{(q-1)xy(y-x)(x-tu)}{(y-x)(y-qtu)(y-qx)(x-qtu)} + \frac{(x-qy)x(x-y)(y-tu)}{(x-y)(x-qtu)(x-qy)(y-qtu)}\right)
\]
\[
\times C_{\lambda,w}(x,y).
\]
Comparing the rational functions we see that the coefficients coincide.
\end{proof}

\subsection{$d_-, d_+, z_i$}
It remains to check the following relations:
\[
z_i d_- = d_i z_i,\quad d_+ z_i = z_{i+1} d_+,
\]
\[
z_1 (q d_+ d_- - d_- d_+) = qt (d_+ d_- - d_- d_+) z_k.
\]
The proof of the first two is straightforward, and the last one immediately follows from \eqref{comm geometry} and \eqref{qcomm geometry}. The proof of Theorem \ref{thm:verification} is complete.

\subsection{Serre duality}
\label{sdsection}
We have two additional involutions on
$K(\PFH_{n,n+k})$ and $\bar K(\PFH_{n,n+k})$, given by Serre duality and dualization
of vector bundles, respectively:
\[
\SD \left(\sum_{\lambda,w}a_{\lambda,w}(q,t)I_{\lambda,w}\right)=\sum_{\lambda,w}a_{\lambda,w}(q^{-1},t^{-1})I_{\lambda,w},
\]
\[
\left(\sum_{\lambda,w}a_{\lambda,w}(q,t)I'_{\lambda,w}\right)^*=\sum_{\lambda,w}a_{\lambda,w}(q^{-1},t^{-1})I'_{\lambda,w}.
\]
We have another involution $\mathcal{N}=\mathcal{L} \SD \mathcal{L}^{-1}$,
where $\mathcal{L}$ is the pullback of the determinant of the tautlogical bundle
from $\Hilb_n$, satisfying $H_{\mu,w}=(-1)^{|\mu|} \mathcal{L} I_{\mu,w}$.
\begin{equation}
  \label{NUdef}
\CN\left(\sum_{\lambda,w}a_{\lambda,w}(q,t)H_{\lambda,w}\right)=\sum_{\lambda,w}a_{\lambda,w}(q^{-1},t^{-1})H_{\lambda,w}.
\end{equation}

This operator has the commutation relations
agreeing with \eqref{invsymmetry}, justifying calling it $\mathcal{N}$:
\begin{proposition}
  \label{invprop}
One has 
\[
\CN d_{-}\CN=d_{-},\; \CN T_i \CN=T_i^{-1},\; \CN d_{+}\CN = q^{-k} z_1 d_{+} =\beta(d_+^*).
\]
\end{proposition}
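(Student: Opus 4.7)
The plan is to exploit that, by \eqref{NUdef}, $\CN$ fixes the basis $\{H_{\lambda,w}\}$ and acts antilinearly by $(q,t)\mapsto(q^{-1},t^{-1})$ on scalar coefficients. Consequently, an identity of the form $\CN A \CN = B$ reduces to checking that, in the $H$-basis, the matrix coefficients of $B$ are obtained from those of $A$ by the substitution $(q,t)\mapsto(q^{-1},t^{-1})$ (with the convention that each torus weight $w_i$, itself a monomial in $q,t$, is inverted along the way). The three statements can then be checked by direct computation.

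The case of $d_-$ is immediate: by Lemma \ref{lem:d operators}, $d_- H_{\lambda,wx}=H_{\lambda,w}$, so all matrix coefficients are $0$ or $1$ and are trivially fixed by $(q,t)\mapsto(q^{-1},t^{-1})$. For $T_i$, the relevant invariant plane is spanned by $H_{\lambda,w}$ and $H_{\lambda,s_iw}$; the matrix of $T_i$ is read off from \eqref{eq: T action}, and the matrix of $T_i^{-1}$ can be extracted from $(T_i-1)(T_i+q)=0$, which gives $T_i^{-1}=q^{-1}(T_i+q-1)$. Substituting $(q^{-1},t^{-1},w_i^{-1},w_{i+1}^{-1})$ for $(q,t,w_i,w_{i+1})$ in the entries of $T_i$ and using $w_i^{-1}-w_{i+1}^{-1}=-(w_i-w_{i+1})/(w_iw_{i+1})$ reproduces the matrix of $T_i^{-1}$ after elementary cancellation. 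Hence $\CN T_i\CN = T_i^{-1}$.

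For the substantive relation $\CN d_+\CN = q^{-k}z_1 d_+$, Lemma \ref{lem:d operators} (rewritten in the $H$-basis using $H_{\mu,w}=(-1)^{|\mu|}q^{n(\mu')}t^{n(\mu)}I_{\mu,w}$, which absorbs the factor $x$ appearing in front of $d_{\lambda+x,\lambda}$) yields
\[
d_+ H_{\lambda,w} = q^k\sum_x d_{\lambda+x,\lambda}(q,t)\prod_{i=1}^k\frac{x-tw_i}{x-qtw_i}\,H_{\lambda+x,xw}.
\]
Using $z_1 H_{\lambda+x,xw}=x\,H_{\lambda+x,xw}$ we obtain an explicit expression for $q^{-k}z_1d_+ H_{\lambda,w}$, while applying $\CN$ to both sides of $d_+$ produces
\[
\CN d_+\CN H_{\lambda,w} = q^{-k}\sum_x d_{\lambda+x,\lambda}(q^{-1},t^{-1})\prod_{i=1}^k \frac{x^{-1}-t^{-1}w_i^{-1}}{x^{-1}-q^{-1}t^{-1}w_i^{-1}}\,H_{\lambda+x,xw}.
\]
A direct simplification of each Hecke factor yields $\frac{x^{-1}-t^{-1}w_i^{-1}}{x^{-1}-q^{-1}t^{-1}w_i^{-1}}=q\cdot\frac{x-tw_i}{x-qtw_i}$, contributing an overall $q^k$ that cancels the outer $q^{-k}$. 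What remains to prove is the Pieri symmetry
\[
d_{\lambda+x,\lambda}(q^{-1},t^{-1})=x\cdot d_{\lambda+x,\lambda}(q,t).
\]

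This Pieri identity is the only substantive step and the main obstacle of the proof. Using the explicit formula for $d_{\lambda,\mu}$, each factor $(q^{a_\mu(s)}-t^{l_\mu(s)+1})/(q^{a_\lambda(s)}-t^{l_\lambda(s)+1})$ under $(q,t)\mapsto(q^{-1},t^{-1})$ picks up, after clearing a sign, the additional factor $q^{a_\lambda(s)-a_\mu(s)}t^{l_\lambda(s)-l_\mu(s)}$; similarly for the column factors. Writing $x$ at position $(r_0,c_0)$, cells $s\in R_{\lambda,\mu}$ lie in the row of $x$, where $a_\lambda(s)=a_\mu(s)+1$ and $l_\lambda(s)=l_\mu(s)$, each contributing a factor of $q$; cells $s\in C_{\lambda,\mu}$ lie in the column of $x$, where $a_\lambda(s)=a_\mu(s)$ and $l_\lambda(s)=l_\mu(s)+1$, each contributing a factor of $t$. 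Since $|R_{\lambda,\mu}|=\mu_{r_0}=c_0$ and $|C_{\lambda,\mu}|=\mu'_{c_0}=r_0$, the total factor is $q^{c_0}t^{r_0}=\chi(x)=x$, as required. The final equality $q^{-k}z_1d_+ = \beta(d_+^*)$ is the definition of $\beta$, completing the proof.
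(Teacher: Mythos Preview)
Your proof is correct and follows essentially the same approach as the paper's: verify each identity by comparing matrix coefficients in the $H$-basis, using that $\CN$ fixes $H_{\lambda,w}$ and inverts all $(q,t)$-monomials (including the weights $w_i$ and $x$). The computations for $d_-$, $T_i$, and $d_+$ match the paper's line by line. The one place you go further than the paper is in actually \emph{proving} the Pieri symmetry $d_{\lambda+x,\lambda}(q^{-1},t^{-1})=x\,d_{\lambda+x,\lambda}(q,t)$ from the explicit product formula; the paper simply asserts this identity. Your arm/leg bookkeeping and the count $|R_{\lambda,\mu}|=c_0$, $|C_{\lambda,\mu}|=r_0$ giving the factor $q^{c_0}t^{r_0}=x$ are correct, so this is a genuine (if small) improvement in completeness over the paper's argument.
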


%
\begin{proof}
The first equation is clear from Lemma \ref{lem:d operators}. For the second, observe that the Hecke relations imply
\[
T_m^{-1}=q^{-1}T_m+q^{-1}(q-1).
\]
On the other hand, by \eqref{eq: T action} one has
\[
\CN T_m\CN (H_{\lambda,w})=\frac{(q^{-1}-1) w^{-1}_{m+1}}{w^{-1}_{m}-w^{-1}_{m+1}}H_{\lambda,w}+\frac{w^{-1}_{m}-q^{-1}w^{-1}_{m+1}}{w^{-1}_{m}-w^{-1}_{m+1}}H_{\lambda, s_m(w)}=
\]
\[
q^{-1}\left[\frac{(q-1) w_{m}}{w_{m}-w_{m+1}}H_{\lambda,w}+\frac{w_{m}-qw_{m+1}}{w_{m}-w_{m+1}}H_{\lambda, s_m(w)}\right]=q^{-1}\left[(q-1)+T_{m}\right].
\]
Finally,
\[
\CN d_{+}\CN=q^{-k}\sum_{x} d_{\lambda+x, \lambda} (q^{-1},t^{-1})\prod_{i=1}^k \frac{x^{-1}- t^{-1} w_i^{-1}}{x ^{-1}- q^{-1}t^{-1} w_i^{-1}} H_{\lambda+x,x w}=
\]
\[
\sum_{x} x d_{\lambda+x, \lambda} \prod_{i=1}^k \frac{x- t w_i}{x - qt w_i} H_{\lambda+x,x w}=q^{-k}z_1 d_{+}.
\]
Here we used the fact that $d_{\lambda+x, \lambda} (q^{-1},t^{-1})=x d_{\lambda+x, \lambda} (q,t).$
\end{proof}

\section{Comparison with the polynomial representation}


Theorem \ref{thm:verification} showed that there is an action of
$\mathbb{A}_{q,t}$ on $U_\bullet$, and so in particular an
action of the subalgebra $\mathbb{A}_q \subset \mathbb{A}_{q,t}$.
It is an immediate consequence of Proposition \ref{halfalgprop}
that there is a unique $\BA_q$-equivariant sequence of maps $\Phi_k:V_k\to U_k$ sending $1\in V_0$ to $H_{()}\in K(\PFH_{0,0})$.
We denote by $\Phi: V_\bullet \to U_\bullet$ the resulting map.

In this section, we will prove:
\begin{theorem}
  \label{isothm}
  The map $\Phi_k$ is an isomorphism. Moreover, we have that
  \[\Phi_0(H_{\mu})=\tilde{H}_{\mu},\]
  where $\tilde{H}_{\mu}$ is the modified Macdonald polynomial,
  and that $\Phi_k \CN=\CN \Phi_k$,
  where the two operators denoted $\CN$ are the
  involutions in equations \eqref{NVdef} and
  \eqref{NUdef}.
  
\end{theorem}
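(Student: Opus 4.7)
The plan is to establish surjectivity via cyclicity of $V_\bullet$ as an $\BA_q$-module, identify the fixed-point basis with modified Macdonald polynomials, and propagate the $\CN$-intertwining along the $\BA_q$-action.

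For the isomorphism, since $V_\bullet = \BA_q \Id_0$ by Proposition \ref{halfalgprop} and $\Phi$ is by construction $\BA_q$-equivariant with $\Phi(1) = H_{()}$, it is enough to show $U_\bullet = \BA_q \cdot H_{()}$. Starting from $H_{()} \in U_0$, the operator $d_+$ of Lemma \ref{lem:d operators} is a Pieri-type creation operator whose coefficient on each addable cell is nonzero; iterating it together with the affine Hecke action of the $T_i$ (which permutes entries of the flag weight $w$) generates every fixed-point class $H_{\lambda,w}$ by induction on $|\lambda|$. Injectivity then follows from a graded $\BQ(q,t)$-dimension count: both $V_k$ and $U_k$ are free with bases indexed by a partition $\lambda$ together with a $k$-tuple of added cells (resp.\ a monomial $y^\alpha$), and the graded generating functions agree.

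For $\Phi_0(\tilde H_\mu) = H_\mu$, the Example after Lemma \ref{lem:d operators} shows that $d_- d_+$ acts on $U_0$ as multiplication by $e_1$ in the basis $\{H_\mu\}$ via the Pieri coefficients $d_{\lambda,\mu}$. A direct residue calculation using the formula of Proposition \ref{halfalgprop} together with $\pExp[-y^{-1}X]\big|_{y^{-1}} = -e_1[X]$ yields the same statement on $V_0 = \Lambda$ in the basis $\{\tilde H_\mu\}$. More generally, compositions of the form $d_-^k \cdot P(y_i, T_j) \cdot d_+^k$ produce operators on $V_0$ that realize either multiplication by arbitrary symmetric functions or the Macdonald eigenoperators (with $\tilde H_\mu$ as simultaneous eigenvectors with distinct eigenvalues); by $\BA_q$-equivariance these transfer to $U_0$ with the same action in the basis $\{H_\mu\}$. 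Equivalently, one may invoke Haiman's identification of the torus-fixed basis of $K(\Hilb^n)$ with the modified Macdonald polynomials. Either route gives $\Phi_0(\tilde H_\mu) = H_\mu$ in our normalization.

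For $\CN \Phi_k = \Phi_k \CN$, both involutions are characterized by the intertwining relations \eqref{invsymmetry} with the $\BA_{q,t}$-generators: this is the definition of $\CN$ on $V_\bullet$ from \eqref{NVdef}, and is Proposition \ref{invprop} for $\CN$ on $U_\bullet$. Both fix the vacuum: $\CN_V(1) = 1$ trivially, and $\CN_U(H_{()}) = H_{()}$ since $\PFH_{0,0}$ is a point. For any $F = R \cdot 1 \in V_\bullet$ with $R$ a word in the $\BA_q$-generators, $\CN F = \CN(R) \cdot 1$, where $\CN(R)$ replaces each generator according to \eqref{invsymmetry}; the same word describes a compatible operator on $U_\bullet$ under $\Phi$ (using the $\BB_{q,t}$-structure to interpret the $z_1$ that appears in $\CN(d_+)\CN$), yielding $\Phi \CN F = \CN \Phi F$. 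The main obstacle is the middle step: $e_1$-multiplication alone does not characterize $\tilde H_\mu$, so one must either exhibit a full commuting family of diagonalizable operators in the $\BA_q$-action or appeal to Haiman, and in either case verify that the $\BA_q$-composition $\Lambda$-action on $U_0$ matches the geometric multiplication action by tautological bundles.
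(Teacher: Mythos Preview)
Your overall plan is reasonable, but the crucial isomorphism step has a real gap, and this is precisely where the paper's argument differs from yours.

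\textbf{The isomorphism.} You assert that from $H_{()}$, iterating $d_+$ together with the $T_i$ ``generates every fixed-point class $H_{\lambda,w}$''. But $d_+$ only produces prescribed linear combinations $\sum_x d_{\lambda+x,\lambda}(\cdots)H_{\lambda+x,xw}$ over \emph{all} addable cells $x$, and nothing in your sketch explains how to isolate a single term. The $T_i$ merely permute the entries of $w$; the natural tool for separating the summands is diagonalization under the $z_i$, but those lie in $\BB_{q,t}$, not in $\BA_q$, and invoking them would require first knowing that $\Phi$ intertwines them. So cyclicity of $U_\bullet$ over $\BA_q$ is exactly what is at stake, and you have not proved it. The paper avoids this entirely: it writes down an explicit basis $v_{\mu,a}$ of $V_{n,k}$ indexed by $A(n,k)$, exhibits a bijection $A(n,k)\leftrightarrow M(n,k)$ to the fixed-point indices (this is the dimension count), and then proves that $\Phi(v_{\mu,a})$ is \emph{upper triangular} in the $H$-basis with respect to a Bruhat-type partial order, with nonzero diagonal entries. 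Triangularity gives injectivity and surjectivity simultaneously in each bidegree, with no cyclicity argument needed.

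\textbf{The Macdonald identification and $\CN$.} You correctly observe that $d_-d_+|_{U_0}$ acts as multiplication by $e_1$ with the Pieri coefficients $d_{\lambda,\mu}$, and you correctly flag that $e_1$ alone does not pin down the $\tilde H_\mu$. The paper's resolution is to reverse your order: first prove $\Phi\CN=\CN\Phi$ from Proposition~\ref{invprop}, and \emph{then} identify the bases by noting (via \cite{garsia1996remarkable}) that the operators $e_1$ and $\CN e_1 \CN$ together generate the ring of symmetric functions; since $\CN$ fixes both bases by definition and $e_1$ has the same matrix in each, the identification follows. Your proposed alternatives---building Macdonald eigenoperators inside $\BA_q$, or invoking Haiman's identification directly---are not worked out; the first is unsubstantiated, and the second still requires matching the $\BA_q$-action on $U_0$ to the one on $V_0$, which comes back to the same Pieri computation.

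Finally, in your $\CN$-intertwining argument, the line ``the same word describes a compatible operator on $U_\bullet$ under $\Phi$ (using the $\BB_{q,t}$-structure\ldots)'' quietly assumes $\Phi$ intertwines $z_1$, since $\CN d_+\CN = q^{-k}z_1 d_+$ on the geometric side. At that point you only know $\BA_q$-equivariance of $\Phi$, so this step is circular as written.
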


We now start proving this theorem, beginning with the statement that $\Phi_k$ is
an isomorphism.

Let $V_{n,k}$ denote the degree $(n-k)$ part of $V_k$. Let $U_{n,k} = \bar K(\PFH_{n,n-k})$. It is clear that the bi-degrees of $T_i, d_-, d_+$ are $(0,0)$, $(0,-1)$, $(1,1)$ respectively both in $V_{n,k}$ and $U_{n,k}$, so that $\Phi$
preserves the bi-grading.
We begin by showing that $V_{n,k}$ and $U_{n,k}$ have the same dimension.

Define two collections of sets by
\[
A(n,k)=\left\{(\mu,a)\in \mathcal{P}\times 
\mathbb{Z}_{\geq 0}^k : |\mu|+|a|=n-k\right\},
\]
\[
M(n,k)=\left\{\lambda^{(n)}\supset\cdots\supset \lambda^{(n-k)}: \lambda^{(n-i)}\in \mathcal{P}_{n-i},\; \lambda^{(n)}\setminus \lambda^{(n-k)}\mbox{ is a horizontal strip}\right\}.
\]
Then the elements of $M(n,k)$ are just the indices $\lambda^{(\bullet)}$ of the basis $H_{\lambda^{(\bullet)}}$ of $U_{n,k}$ and elements of $A(n,k)$ index elements
\begin{equation}
\label{vmuadef}
v_{\mu,a}=d_-^l y_1^{a_k}\cdots y_{k}^{a_1} y_{k+1}^{\mu_l}\cdots y_{k+l}^{\mu_1},
\end{equation}
which make up a basis of $V_{n,k}$, because the Hall-Littlewood polynomials make
up a basis of symmetric functions.
Define a function $A(n,k)\rightarrow M(n,k)$
by the following procedure: given $\mu, a$ we set
\[
\lambda^{(n-i)} = \sort(\mu_1, \mu_2, \ldots, \mu_{l(\mu)}, a_1, \ldots, a_i, a_{i+1}+1,\ldots,a_k+1)' \quad(0\leq i\leq k),
\]
where $\sort$ transforms a sequence into a partition by sorting the entries and throwing away zeros, and $'$ takes the conjugate partition.
For instance, we would have
\[[3,1],[1,0,1,2,3] \mapsto
[7,5,3,1],[7,4,3,1],[6,4,3,1],[6,3,3,1],[6,3,2,1],[6,3,2]].\]
It is straightforward to see that this is a bijection, proving that the
two spaces have the same dimension.

We will prove our theorem by showing that
$\Phi_k$ has a triangularity property with
respect to a partial order on $A(n,k) \leftrightarrow M(n,k)$
that we now define:
Given $(\mu,a)\in A(n,k)$, and some $l$ greater than the length
of $\mu$, let
\[\alpha=(\mu;a+1)_{l}^{rev}=(a_{k}+1,...,a_{1}+1;\mu_l,...,\mu_1)\]
denote the reversed order of the concatenation
of $\mu$ and $(a_1+1,...,a_k+1)$, which always has at least one
leading zeros included in the $\mu$ terms.
For instance, if we took $(\mu,a)=([2,1];(1,0,2))$, and chose $l=4$, we would have
\[\alpha=(\mu;a+1)^{rev}_4=(3,1,2,0,0,1,2).\]
We will describe the procedure for determining how to compare
two elements in terms of these vectors.

For any $(\mu,a)$, we start by asserting
the following moves 
produce an element that is larger
in this order in $A(n,k)$.
In our description, the operation ``set $\alpha_i=c$ and sort''
means to make the desired substitution, then sort the leading 
``partition terms''
if $i\leq l$, so as to obtain something that we may regard as
an element of $A(n,k)$.
In the example above, the operation ``set $\alpha_4=2$ and sort''
would yield $(3,1,2,0,1,2,2)$, corresponding to $\mu=[2,2,1]$, and
$a=(1,0,2)$.
\begin{enumerate}
\item If $\alpha_i>\alpha_j$ for $i<j$, set
  $(\alpha_i,\alpha_j)=(\alpha_j,\alpha_i)$, i.e. switch the labels and sort.
\item If $\alpha_i<\alpha_j-1$ for any $i,j$, set
  $(\alpha_i,\alpha_j)=(\alpha_j-1,\alpha_i+1)$ and sort.
\end{enumerate}
We let $\leq_{bru}$ denote the binary relation transitively
generated by these moves, which we can see does not depend on $l$, provided it is large enough.
This is in fact a partial order, which can be seen using an
alternative description in terms of the Bruhat order on affine permutations
for $GL_{k+l}$. To see this, fix some value of $l$, and
let $\widehat{W}= \mathbb{Z}^{k+l}\ltimes W_0$ denote the affine Weyl group
for $GL_{k+l}$. Now identify compositions $\alpha$ with sorted final $l$ coordinates with elements of
$S_{l} \backslash \widehat{W}/S_{k+l}$, by choosing a representative of minimal length from each coset,
of which there is a unique one.
Then $\leq_{bru}$ is the order induced by the Bruhat order on $\widehat{W}$.
Without the sorting condition from the second action of $S_l$,
this also appears in \cite{haglund2008combinatorial}.
Notice that for $k=0$ it becomes the usual dominance order on partitions.

\begin{proposition}
We have that
\begin{equation}
\label{lteq}
\Phi_k(v_{\mu,a})= \sum_{(\nu,b) \leq_{bru} (\mu,a)} c_{\nu,b}(q,t) H_{\nu,b}
\end{equation}
with $c_{\mu,a}(q,t)\neq 0$.
\end{proposition}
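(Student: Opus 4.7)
The strategy is to exploit the $\mathbb{A}_q$-equivariance of $\Phi$ established in Proposition~\ref{halfalgprop}. Since $\Phi_0(1)=H_\emptyset$, we have
\[
\Phi_k(v_{\mu,a}) \;=\; d_-^l\, y_1^{a_k}\cdots y_k^{a_1} y_{k+1}^{\mu_l}\cdots y_{k+l}^{\mu_1}\, d_+^{k+l}\, H_\emptyset,
\]
where every operator on the right is now the geometric one from Section~4. The task becomes to expand this in the fixed-point basis $\{H_{\nu,b}\}$ and verify both triangularity and nonvanishing of the leading coefficient. The starting point $d_+^{k+l} H_\emptyset$ can be computed directly from Lemma~\ref{lem:d operators}: the factor $\tfrac{x-tw_i}{x-qtw_i}$ vanishes whenever $x=tw_i$, so at each of the $k+l$ applications only the rightward extension of the one-row partition survives, yielding a nonzero scalar multiple of $H_{(k+l),\,(q^{k+l-1},\ldots,q,1)}$.

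The core step is to derive an explicit formula for the action of $y_i$ on a fixed-point class $H_{\lambda,w}$ with $|w|=m:=k+l$. Combining the Iwahori--Matsumoto expression $y_i=q^{i-m}T_{i-1}^{-1}\cdots T_1^{-1}\varphi T_{m-1}\cdots T_i$ from Lemma~\ref{lem:Iwahori}, the formula \eqref{comm geometry} for $\varphi$, and the fixed-point formula \eqref{eq: T action} for $T_j$, one obtains a finite sum over addable cells $x$ of $\lambda$ and partial $T$-swap choices, each contribution of the form (scalar)$\cdot H_{\lambda+x,w'}$ for some insertion/permutation $w'$ of $(x,w)$. I would single out the \emph{greedy term}: the unique contribution in which no $T_j$-swap occurs and where $x$ is precisely the cell dictated by the bijection $A(n,k)\to M(n,k)$. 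Iterating $y_{k+l}^{\mu_1},\ldots,y_1^{a_k}$ with the greedy choices reproduces exactly the target flag $\lambda^{(\bullet)}$. The non-greedy contributions come from two sources: (i) $T_j$-swaps that exchange $w_j$ and $w_{j+1}$, which match the first Bruhat move, and (ii) non-greedy choices of the addable cell $x$, which after the sort step correspond precisely to the second move $(\alpha_i,\alpha_j)\mapsto(\alpha_j-1,\alpha_i+1)$.

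Triangularity then follows by induction on the number of $y$-applications, using closure of the lower set of $\leq_{bru}$ under these two deviations. The final operator $d_-^l$ merely erases the last $l$ entries of $w$, so it preserves $\leq_{bru}$ via the bijection. To prove $c_{\mu,a}(q,t)\neq 0$, one multiplies the factors accumulated along the greedy path: Pieri coefficients $d_{\lambda+x,\lambda}$ (nonzero by their explicit product formula, for generic $q,t$), the factors $\tfrac{x-tw_j}{x-qtw_j}$ (nonzero because the greedy $x$ is never of the form $qtw_j$, as that would correspond to an occupied position), and the $\tfrac{x}{x-qty}$ factor from $\varphi$.

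The main obstacle I anticipate is the combinatorial bookkeeping needed to translate Bruhat moves on $(\mu,a)\in A(n,k)$ into cell-choice and $T_j$-swap deviations on $(\lambda,w)\in M(n,k)$, complicated by the sort step in the bijection. The cleanest route is likely to introduce an auxiliary partial order on flag-indices transported from $\leq_{bru}$ via the bijection, and to verify that each elementary deviation strictly decreases this finer order; degenerate cases (e.g., when a would-be non-greedy cell $x=tw_j$ is non-addable) must be handled carefully to confirm that the resulting vanishing of terms does not obstruct the inductive triangularity argument.
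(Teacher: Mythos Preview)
Your overall plan is exactly the paper's plan: write $\Phi_k(v_{\mu,a})$ as a word in $T_i^{\pm1}$, $\varphi$, and $d_-$ acting on the single fixed point $d_+^{k+l}H_\emptyset$, and then track how each elementary operator moves terms in the Bruhat order. Your observation that $d_+^{k+l}H_\emptyset$ is a nonzero multiple of $H_{(k+l),(q^{k+l-1},\ldots,1)}$ is correct and is precisely the paper's (implicit) base case.

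The gap is in the inductive step. You propose to apply the $y_i$'s in the order read off from $v_{\mu,a}$ and argue that at each stage the ``greedy'' term dominates. But the order-preservation property you need for $T_j$ has a side condition: the paper's rule is that if $(\mu,a)\leq_{bru}(\nu,b)$ \emph{and} $b\leq_{bru} s_i(b)$, then all terms of $T_{k-i}^{\pm1}$ applied to the lower set stay below $(\nu,s_i(b))$. The natural order violates this condition. For instance, with $k=2$, $\mu=\emptyset$, $a=(2,1)$ one has $v_{\emptyset,(2,1)}=y_1 y_2^2$; after $y_2^2$ the leading term is $(\emptyset,(2,0))$, but applying $y_1=q^{-1}\varphi T_1$ now requires $T_1$ on a state whose leading term satisfies $(2,0)>_{bru}(0,2)=s_1(2,0)$, so the Bruhat-closure argument breaks, and naively composing the $\LT$-rules gives $(\emptyset,(0,3))$ rather than $(\emptyset,(2,1))$. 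Your ``greedy $=$ no $T$-swap'' heuristic also fails here: reaching the correct target sometimes forces a swap.

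The paper fixes this by exploiting commutativity of the $y_i$: it reorders the product so that at every step one is incrementing the \emph{leftmost maximal} coordinate of $a$. With that choice the paper verifies directly that along the resulting sequence $b^j,a^j$ one always has $b\leq_{bru} s_i(b)$ at each $T$-step, so the lower-set closure (your ``two deviations'') genuinely applies. That reordering is the missing idea in your argument; once inserted, the rest of your outline matches the paper's proof.
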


\begin{proof}

Given 
\[f=\sum_{(a,\mu)} c_{a,\mu}(q,t) H_{\mu,a} \in U_{n,k},\] 
let $\terms(f)$ denote the set of those $(a,\mu) \in A(n,k)$
such that $c_{\mu,a}(q,t)\neq 0$.
Let us write equation \eqref{lteq} as
\[\LT(\Phi_k(v_{\mu,a}))=(\mu,a),\]
where the statement $\LT(f)=(\mu,a)$ asserts that 
$(\mu,a)\in \terms(f)$, and is greater than all other elements
with respect to $\leq_{bru}$. Note that not every $f$
has a leading term because $\leq_{bru}$ is only a partial order.

Let $b=s_i(a)$, the result of switching the labels $a_i,a_{i+1}$. Then we use the following
description of the terms of our operators:
\begin{align*}
\terms(T_{k-i}^{\pm 1}(H_{\mu,a})) = &
    \{(\mu,a),(\mu,b)\}
\\
\terms(\varphi(H_{\mu,a})) =&
\left\{(\mu\cup \{a_1+1\}-\{i\},(a_2,...,a_k,i))\right\}.
\\
\terms(d_-(H_{\mu,a})) =&  \{(\mu \cup \{a_1+1\},(a_2,...,a_k))\}.
\end{align*}
In the second to last line,
$\nu-\{i\}$
means the result of removing one of the occurences of $i$, where
$i$ ranges over all possible elements
that can be removed.
We include the case where $i$ is zero, and make the sensible convention
that $0 \in \nu$ for any $\nu$, and that $\nu-\{0\}=\nu$.

From these statements, we can check that 
\begin{equation}
\label{LTrules}  
\begin{split}
    \LT(T_{k-i}^{\pm 1}(H_{\mu,a})) & =\max\left((\mu,a),(\mu,s_i(a))\right),\\
    \LT(\varphi(H_{\mu,a})) & =(\mu,(a_2,...,a_{k},a_1+1)), \\
\LT(d_-(H_{\mu,a})) & =(\mu \cup \{a_1+1\},(a_2,...,a_{k})).
\end{split}
\end{equation}
It follows from the properties of the Bruhat order on
$\widehat{W}$ that if $(\mu,a)\leq_{bru} (\nu,b)$, then 
\begin{equation}
\label{LTpreserves}
\begin{split}
(\mu,s_i(a)), (\mu,a) &\leq_{bru}  (\nu,s_i(b)), 
\quad \mbox{if $b \leq_{bru} s_i(b)$},\\
(\mu,(a_2,...,a_k,a_{1}+1) &\leq_{bru} (\nu,(b_2,...,b_{k},b_1+1)), \\
(\mu \cup \{a_1+1\},(a_2,...,a_k)) &\leq_{bru} (\nu\cup \{b_1+1\},(b_2,...,b_k)).
\end{split}
\end{equation}
The second set of equations gives conditions for when 
$A(f)$ has a leading term depending only on the leading term for $f$
for each operator $A$,
and the first set describes what that leading term is.
These two sets of rules will be enough to prove the result.

By the statements about $d_-$ in \eqref{LTrules} and \eqref{LTpreserves}, 
it suffices to prove the proposition in the case 
when $\mu$ is the empty partition. 
We will prove this by induction on $|a|$.
If $m=\max(a)$ is zero, then we are done.
Otherwise, let $i$ be the smallest index such that $a_i=m$. 
Let $g\in U_{n,k}$ be any element with a leading term given by
$\LT(g)=(\emptyset,b)$,
where $b$ is the composition that agrees with $a$, except that $b_i=a_{i-1}$.
It suffices to show that $\LT(y_{k-i}g)=(\emptyset,a)$, 
where $y_i$ is the operator on $U_{n,k}$
defined in terms of $T_i,T_i^{-1},\varphi$ by equation \eqref{yidef}.
Note the reversal of the ordering of $a$
in the definition \eqref{vmuadef} of the basis $v_{\mu,a}$, which is
why we use $y_{k-i}$ instead of $y_i$.

Consider the sequences of elements of $U_{n,k}$ given by 
\begin{align*}
& g^i=g, & g^{j}=T_{k-j}(g^{j+1})\mbox{ for $1\leq j \leq i-1$},\\
& f^k=\varphi(g^1), & f^{j}=T_{k-j}^{-1}(f^{j+1})\mbox{ for $i\leq j \leq k-1$}.
\end{align*}
We also define a sequence of compositions by
\[b^j=s_j(b^{j+1}), \quad a^k=\left(b^1_2,...,b^1_k,b^1_1+1\right),\quad a^{j}=s_j(a^{j+1}).\]
For instance, if $a=(2,0,3,1,3,0,3,0,1)$, then we would have $i=3$, and 
\[b^3,b^2,b^1,a^9,a^8,a^7,a^6,a^5,a^4,a^3=
(2, 0, 2, 1, 3, 0, 3, 0, 1), (2, 2, 0, 1, 3, 0, 3, 0, 1),\]
\[(2, 2, 0, 1, 3, 0, 3, 0, 1),(2, 0, 1, 3, 0, 3, 0, 1, 3),
  (2, 0, 1, 3, 0, 3, 0, 3, 1),\]
\[(2, 0, 1, 3, 0, 3, 3, 0, 1),
  (2, 0, 1, 3, 0, 3, 3, 0, 1), (2, 0, 1, 3, 3, 0, 3, 0, 1),\]
\[(2, 0, 1, 3, 3, 0, 3, 0, 1), (2, 0, 3, 1, 3, 0, 3, 0, 1).\]
By \eqref{yidef}, we have that $f=f^i$, and we clearly have that $a=a^i$.
It therefore suffices to prove the the more general statement that
\[(\emptyset,a^j)=\LT(f^j),\quad (\emptyset,b^j)=\LT(g^j)\]
for all $j$.

To see this, notice that we have $a^j \leq_{bru} a^{j-1}$, and $b^j\leq_{bru} b^{j-1}$.
The first statement follows simply because $a_i=m$ is the maximum entry,
and so the order can only be increased by moving it to the left.
The second statement follows because $i$ is the leftmost occurence
of the maximum entry, so $b_i=m-1$ greater than or equal to every term
to its left.
Therefore, the condition in the first part of \eqref{LTpreserves} is satisfied,
and the desired statement follows by induction from
the first two parts of equations \eqref{LTrules} and \eqref{LTpreserves}.

\end{proof}

To complete the proof of Theorem \ref{isothm},
we first see that $\Phi_k \CN=\CN\Phi_k$ by Proposition \ref{invprop},
so it only remains to show that the fixed points map to the
modified Macdonald polynomials for $k=0$.
For $k=0$, it was proved in \cite{carlsson2015proof}
that $\CN$ acts as $\nabla$ composed with conjugation, i.e.
\[
\CN\left(\sum_{\lambda,w}a_{\lambda,w}(q,t)\tilde{H}_{\lambda}\right)=\sum_{\lambda,w}a_{\lambda,w}(q^{-1},t^{-1})\tilde{H}_{\lambda}.
\]
In \cite{garsia1996remarkable}, it was shown that the ring of symmetric functions are generated by the multiplication operator $e_1$,
and $\nabla e_1 \nabla^{-1}$, or equivalently, $\CN e_1 \CN$. It therefore suffices to show that $\CN,e_1$ have the
same representation in each basis. The involution $\CN$ fixes both sets of basis by definition. To show that $e_1$
has the same coefficients, it suffices to notice that $e_1=d_- d_+$ when restricted to $V_0$,
and recall that the coefficients in Lemma \ref{lem:d operators} are just the coefficients in the Pieri rule for $e_1$.
\qed

\section{Examples}

\subsection{Simple Nakajima correspondences}

An important collection of operators on the $K$-theory of Hilbert schemes can be defined as follows. Consider {\em nested Hilbert scheme}
$\Hilb^{n,n+1}=\{J\subset I\subset \BC[x,y]\}$, where $J$ and $I$ are ideals of codimensions $(n+1)$ and $n$, respectively. The variety $\Hilb^{n,n+1}$ is well known to be smooth \cite{ellingsrud1999cobordism} and carries a natural line bundle $\CL:=I/J$.
It has two projections $f:\Hilb^{n,n+1}\to \Hilb^n$ and $g:\Hilb^{n,n+1}\to \Hilb^{n+1}$ which send a pair $(J\subset I)$ to $I$ and $J$, respectively. In the constructions of \cite{FT,SV} the crucial  role was played by the operators 
\[
P_{1,k}:K(\Hilb^n)\to K(\Hilb^{n+1}),\ P_{1,k}:=g_*(\CL^{k}\otimes f^*(-)). 
\]
Remark that the quotient $I/J$ in the nested Hilbert scheme is supported at one point, which can be translated to the line $\{y=0\}$.
Thus, $\Hilb^{n,n+1}=\PFH_{n+1,n}\times \BC_t$, and $K(\Hilb^{n,n+1})\subset U_1$. Using the algebra $\BA_{q,t}$, we can realize these operators as a composition of three:
\[
(q-1) f^*=d_+:U_0\to U_1,\quad \CL_k=z_1^k:U_1\to U_1,\quad g_*=d_-:U_1\to U_0,
\]
so
\[
P_{1,k}=\frac{1}{(q-1)(1-t)}d_{-}z_1^k d_{+}.
\]

\subsection{Generators of the elliptic Hall algebra}

We will need an explicit formula for the action of $y_1$ on $U_1$. Since there are no $T$'s and $k=1$, by \eqref{comm geometry} we have
\begin{equation}
\label{y1 level 1}
y_1(H_{\lambda,y})=\frac{1}{q-1}[d_+,d_-]H_{\lambda,y}=- \sum_{x} d_{\lambda+x, \lambda} \frac{x}{x-qt y}  H_{\lambda+x,x }.
\end{equation}
This immediately implies the following result:
\begin{proposition}
The following identity holds for all $S_i$:
\begin{equation}
\label{level 1 product}
d_{-}(z_1^{S_n}y_1z_1^{S_{n-1}}y_1\cdots z_1^{S_1}y_1)d_{+}=(-1)^n\sum_{T} \prod_{i<j}\omega(w_i/w_j) \frac{w_i^{S_i+1}}{w_i-qtw_{i-1}}H_{\lambda},
\end{equation}
where $T$ is a standard tableaux of shape $\lambda$ and size $n$, $w_i$ is the $q,t$-content  of the box labeled by $i$ in $T$, and 
\[
\omega(x)=\frac{(1-x)(1-qtx)}{(1-qx)(1-tx)}.
\]
\end{proposition}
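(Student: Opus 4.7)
My plan is to compute the left-hand side by iterating formula \eqref{y1 level 1} and reorganizing the resulting sum over sequences of box additions as a sum over standard Young tableaux.

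First I would use the diagonal action $z_1 H_{\lambda,w}=w H_{\lambda,w}$ from \eqref{eq: z action} to combine each $z_1^{S_i}$ with the adjacent $y_1$: applying one pair $z_1^{S_i}y_1$ to a state $H_{\lambda,y}$ produces
\[
-\sum_x d_{\lambda+x,\lambda}\,\frac{x^{S_i+1}}{x-qty}\,H_{\lambda+x,x}.
\]
The base case is $d_+ H_{()}=H_{(1),1}$ from Lemma \ref{lem:d operators} (only $x=1$ is addable to the empty partition), giving the initial content $w_0=1$.

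Iterating this $n$ times, a sequence of addable-box contents $(w_1,\ldots,w_n)$ builds up a partition $\lambda=\lambda_n$ through a chain $\lambda_0=(1)\subset\lambda_1\subset\cdots\subset\lambda_n$, with cumulative coefficient
\[
(-1)^n\prod_{i=1}^{n} d_{\lambda_i,\lambda_{i-1}}\cdot\prod_{i=1}^{n}\frac{w_i^{S_i+1}}{w_i-qt\,w_{i-1}}.
\]
Such a chain of box additions is precisely a standard tableau $T$ of shape $\lambda$. Applying $d_-$ at the end drops the corner marker, replacing $H_{\lambda,w_n}$ by $H_\lambda$, and the result matches the stated formula except that the Pieri product $\prod_i d_{\lambda_i,\lambda_{i-1}}$ appears in place of $\prod_{i<j}\omega(w_i/w_j)$.

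The main obstacle is therefore the combinatorial identity
\[
\prod_{i=1}^n d_{\lambda_i,\lambda_{i-1}} \;=\; \prod_{1\le i<j\le n} \omega(w_i/w_j)
\]
for standard tableaux $T$. I would attempt this by induction on $n$, using the explicit arm-leg hook formula for $d_{\lambda_i,\lambda_{i-1}}$ to express each Pieri coefficient as a product of local factors over cells in the row and column of the newly added box, and then matching these against the telescoping product of $\omega$-ratios over pairs of previously added contents. This verification requires care with sign conventions and with apparent singularities when two successive boxes in $T$ are adjacent (so that $w_i/w_j\in\{q^{\pm1},t^{\pm1}\}$); such poles in individual $\omega$ factors must cancel against the corresponding zeros of the arm-leg factors of the Pieri coefficients, so the identity is natural only at the level of the full product. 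This is the least routine step in what is otherwise a mechanical iteration.
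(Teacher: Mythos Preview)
Your approach is essentially the same as the paper's: iterate the formula \eqref{y1 level 1} for $y_1$ together with the diagonal action of $z_1$, organize the result as a sum over standard tableaux, and reduce everything to a Pieri-to-$\omega$ identity. The paper carries out the same induction but packages the identity in its one-step form,
\[
d_{\lambda+w_{n+1},\lambda}=\prod_{i\le n}\omega(w_i/w_{n+1}),
\]
which it simply asserts; your cumulative identity $\prod_i d_{\lambda_i,\lambda_{i-1}}=\prod_{i<j}\omega(w_i/w_j)$ is just the telescoped product of these. So the ``main obstacle'' you isolate is exactly the step the paper takes for granted, and your plan to verify it is more thorough than what the paper actually writes.

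Two small remarks. First, the one-step identity follows almost immediately from \eqref{eq:d formula} once you observe that $\Lambda^*\big((q-1)(t-1)B_\lambda x^{-1}\big)=\prod_{\square\in\lambda}\omega(\chi(\square)/x)$, so you need not go through the arm/leg hook formula; the subtlety you flag about apparent poles when $x$ is adjacent to a box of $\lambda$ is precisely the cancellation between the factors $(1-ux)$ and $\tfrac{1}{1-u}$ and the $\omega$-factors in the derivation of \eqref{eq:d formula}. Second, be careful with your indexing: starting from $H_{()}$ gives an initial box with content $w_0=1$, so your $\lambda_n$ has $n+1$ cells and the $\omega$-product should run over $0\le i<j\le n$ rather than $1\le i<j\le n$. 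This is a bookkeeping point (the paper's statement is itself ambiguous about $w_0$), not a flaw in the strategy.
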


\begin{proof}
It is sufficient to prove by induction that 
\[
(z_1^{S_n}y_1z_1^{S_{n-1}}y_1\cdots z_1^{S_1}y_1)d_{+}=(-1)^n\sum_{T} \prod_{i<j}\omega(w_i/w_j) \frac{w_i^{S_i+1}}{w_i-qtw_{i-1}}H_{\lambda,\sq_n}.
\]
If we apply $y_1$ to the right hand side, we need to sum over all possible ways to add a box $w_{n+1}$ to a standard Young tableau $T$, that is, over all standard Young tableaux of size $(n+1)$. The additional factor is described by \eqref{y1 level 1} with $x=w_{n+1}$ and $y=w_n$:
\[
-d_{\lambda+w_{n+1}, \lambda} \frac{w_{n+1}}{w_{n+1}-qt w_n}=-\prod_{i\le n}\omega(w_i/w_{n+1})\frac{w_{n+1}}{w_{n+1}-qt w_n}.
\]
The action of $z_1^{S_{n+1}}$ on the result just adds a factor $w_{n+1}^{S_{n+1}}$.
\end{proof}

As a corollary, we obtain a different proof of the formula from \cite{negut2012moduli} for the generator $P_{m,n}$ of the elliptic Hall algebra
(for coprime $m$ and $n$). Indeed, it was proved in  \cite{mellit2016toric} that 
\[
P_{m,n}=d_{-}(z_1^{S_n}y_1z_1^{S_{n-1}}y_1\cdots z_1^{S_1}y_1)d_{+},\ S_i=\left\lfloor\frac{mi}{n}\right\rfloor-\left\lfloor\frac{m(i-1)}{n}\right\rfloor.
\]
By substituting these values of $S_i$ into \eqref{level 1 product} we obtain the desired formula. 

\bibliographystyle{amsalpha}
\bibliography{refs}

\end{document}